\tikzset{node distance=1.5cm, auto}
\newtheorem{theorem}{Theorem}[section]
\newtheorem{question}[theorem]{Question}
\newtheorem{proposition}[theorem]{Proposition}
\newtheorem{lemma}[theorem]{Lemma}
\newtheorem{corollary}[theorem]{Corollary}
\theoremstyle{definition}
\newtheorem{definition}[theorem]{Definition}
\theoremstyle{remark}
\newtheorem{example}[theorem]{Example}
\newtheorem{remark}[theorem]{Remark}
\newcommand{\Vol}{\operatorname{Vol}}
\def\<{\langle}
\def\>{\rangle}
\def\R{\mathbb{R}}
\def\RR{\mathbb{R}}
\def\QQ{\mathbb{Q}}
\def\N{\mathbb{N}}
\def\CC{\mathbb{C}}
\def\C{\mathbb{C}}
\def\ZZ{\mathbb{Z}}
\def\Z{\mathbb{Z}}
\def\A{\mathcal{A}}
\def\e{\mathbf{e}}
\def\0{\mathbf{0}}
\def\F1{\mathcal{F}_1}
\def\mult{\operatorname{mult}}
\def\ker{\operatorname{ker}}
\def\Supp{\operatorname{Supp}}
\newcommand{\mb}{\mathbf}
\title{Sparse systems with high local multiplicity}
\date{}
\author{Fr\'ed\'eric Bihan}
\address{Laboratoire de Math\'ematiques\\
         Universit\'e Savoie Mont Blanc\\
         73376 Le Bourget-du-Lac Cedex\\
         France}
\email{frederic.bihan@univ-smb.fr}
\urladdr{http://www.lama.univ-savoie.fr/~bihan/}
\author{Alicia Dickenstein}
\address{Dto.\ de Matem\'atica, FCEN, Universidad de Buenos Aires, and IMAS (UBA-CONICET), Ciudad Universitaria, Pab.\ I, 
C1428EGA Buenos Aires, Argentina}
\email{alidick@dm.uba.ar}
\urladdr{http://mate.dm.uba.ar/~alidick}
\author{Jens Forsg{\aa}rd}
\email{jensforsgard@gmail.com }
\thanks{AD was partially supported by UBACYT 20020220200166BA and CONICET PIP 20110100580, Argentina. FB was partially supported 
by grant ANR-18-CE40-0009 ``ENUMGEOM'' of Agence Nationale de Recherche
and by AmSud project 20-MATH-02 (ARGO) }  
\subjclass{14B05,14Q99,52B20}
\begin{document}

\begin{abstract}
Consider a sparse system of $n$ Laurent polynomials in $n$ variables with complex coefficients and support in a finite lattice set $\A$. 
The maximal number of isolated roots of the system in the torus $(\mathbb C^*)^n$ is known to be the normalized volume of the convex hull of $\A$ 
(the BKK bound).   We explore the following question: if the cardinality of $\A$ equals $n+m+1$, what is the maximum  local intersection multiplicity
at one point in the torus  in terms of $n$ and $m$?  This study was initiated by Gabrielov~\cite{G} in the multivariate case.  We give an upper bound that is 
always sharp when $m=1$ and,  under a technical hypothesis, it is considerably smaller than the previous upper bound for any dimension $n$ and codimension $m$. We also present, for any value
of $n$ and $m$, a particular 
sparse system with high local multiplicity with exponents in the vertices of a cyclic polytope and we explain the rationale of our choice. 
Our work raises several interesting questions.
\end{abstract}

\maketitle

\section{Introduction}

Consider a univariate Laurent polynomial $f$ with support in a finite exponent set $\A \subset \Z$ and let $\mb q \in \C^*$.  It is easy to see that the multiplicity of $\mb q$ as a root of $f$
is strictly smaller than the cardinality $N$ of $\A$, independently of the degree $d$ of $f$. Up to multiplying $f$ by a monomial, we can assume that 
$\A \subset \{0, 1, \dots, d\}$ and that it contains $0$. The polynomial $(x-\mb q)^d$ has support in $\{0, 1, \dots, d\}$ with $d+1$ non-zero coefficients and it has multiplicity $d = (d+1)-1$ at $\mb q$. 
As long as $N < d+1$, the multiplicity of $\mb q$ as a root of $f$ is necessarily smaller than $d$. 
The upper bound $N -1$ was already given by G. H\'ajos~\cite{H} and a simple proof by induction on $d$ can be found in \cite[Lemma~3.9]{KS}.

In the multivariate case,  we fix a finite set $\A =\{a_0, \dots, a_{N-1}\} \subset \Z^n$ of cardinality $N$. 
We assume that the convex hull of $\A$ has full dimension $n$ and we  call 
\begin{equation}\label{eq:m}
m = N-(n+1)
\end{equation}
the codimension of $\A$.  To avoid trivial cases, we will assume that $N \ge n+2$, that is, that the codimension $m$ is at least $1$.
We will consider the  associated $(n+1) \times N$ integer matrix:
\begin{equation}\label{eq:A}
A = \left(\begin{array}{ccc} 1 & \dots & 1 \\ a_0 & \dots & a_{N-1} \end{array}
\right),
\end{equation}
with columns $(1, a_i) \in \Z^{n+1}$.  By our hypothesis about the point configuration $\A$, the matrix $A$ has maximal rank and thus we have that the codimension $m$  equals the dimension
of its kernel.

Consider a system of $n$ Laurent polynomials with complex coefficients supported on $\A$:
\begin{equation} \label{eq:fis}
f_i (x) = 
\sum_{j =0}^{N-1} c_{ij} \, x^{a_j} \, = \, 0, \quad i = 1, \dots, n.
\end{equation}
Here, $x = (x_1, \dots, x_n)$ and we will always assume that the coefficient matrix 
\[C=(c_{ij}) \in \C^{n \times N}\] 
has maximal rank $n$.  
We will also denote this system as
\begin{equation}\label{eq:fisC}
C \cdot x^A \, = \, 0, \end{equation}
where $x^{A}= (x^{a_0}, \dots, x^{a_{n+m}})^t$.
The local intersection multiplicity 
of an isolated solution $\mb q \in (\C^*)^n$ of a polynomial system $f_1, \dots, f_n$ (which is then a local complete intersection) is equal to the
dimension of the localization at $\mb q$ of the quotient of the ring of Laurent polynomials by the ideal they generate. It can also be defined via perturbation of the coefficients
or via topological (local) degree. This notion can be extended for instance to analytic germs. Our aim in this paper is to bound the local
multiplicity of an isolated solution in the torus in terms of $n$ and $m$.

The total number of isolated complex solutions in the torus of
the sparse system~\eqref{eq:fis} is at most the BKK bound (see \cite{Be,Kho, Kush}), which in this case is the normalized volume of the convex hull of $\A$.  The first upper bound 
$2^{ N-1 \choose 2} (n+1)^{N-1}$ on the number of 
real solutions of fewnomial systems is due to Khovanskii~\cite{Kh}. In turn, Gabrielov addressed in~\cite{G} the first  multivariate generalization of an upper bound 
for the multiplicity at an isolated solution $\mb q \in (\C^*)^n$  of a sparse polynomial system like~\eqref{eq:fis}.  He proved that the intersection multiplicity at $\mb q$
cannot exceed the similar bound $$2^{{N}\choose {2}}(n+1)^N,$$ independently of the degrees.
Our Theorem~\ref{T:t_{i}} and its Corollary~\ref{C:t_{i}bis} give, under some conditions, the considerably smaller bound $$(N-1)^{n}=(n+m)^n.$$
To prove this result, we use the combinatorial bound on the local multiplicity at the origin based on the notion of mixed covolume~\cite{HJS, E, M, KK}, that we recall
in Section~\ref{sec:tools}.  

A related interesting paper is~\cite{KS}, where Koiran and Skomra bound the intersection multiplicity at an isolated common zero 
of two bivariate  polynomials in terms of bounds for the degree of one of the polynomials and the number of terms of
the other one. We refer the reader to their Introduction, which also involves complexity issues.  
They asked  if this intersection multiplicity can be polynomially bounded in the number of monomials.  Our bound $(N-1)^n$  is polynomial in $N$ for any fixed $n$.
Another recent paper by Nikitin~\cite{N}  
addresses  the question of lower bounds for the maximal intersection multiplicity at a point in the torus, also  
in the particular case of two variables, allowing different supports for $f_1$ and $f_2$.  When both supports are equal, his lower bound for the
maximum local multiplicity equals $N-1$. 

\medskip

The intersection multiplicity at $\mb q=(q_1,\ldots,q_{n})$ of the system $f_1=\cdots=f_n=0$ equals the intersection multiplicity of the polynomials $f_i(x_1+q_1, \dots, x_n+q_n)$ at $\mb 0= (0, \dots, 0)$.
Moreover, 
a point $\mb q \in (\C^*)^n$ is a solution of system~\eqref{eq:fisC} if and only if $\mb 1 = (1,\ldots, 1)$ is a solution of the
 system with coefficient matrix $(c_{ij} \mb q^{a_j})$, obtained by right  multiplication of $C$ with an invertible diagonal
 matrix, with the same multiplicity.
 We will then assume in general in what follows that $\mb 1$ is an isolated solution of the system $C \cdot x^A=0$, or equivalently that $\mb 0$ is an isolated solution of
 the system $ C \cdot (x+ \mb 1)^A=0$ (and it has the same intersection multiplicity). This second system corresponds to the polynomials 
 \begin{equation}\label{eq:Fk}
 F_k(x) = f_k(x+{\mb 1}), \quad k= 1, \dots, n.
 \end{equation}

For any $n$ and $m$ we present the following explicit system given by the $n$-variate polynomials  
\begin{equation}
\label{eq:fij}
f_{m+1,n}(x)\, =\,  \cdots \, = \, f_{m+n,n}(x) \, = \, 0,
\end{equation}
defined
as follows for any $d \ge 0$:
\begin{equation}
\label{eqn:poly}
f_{d,n}(x) = \sum_{k=0}^{d}(-1)^k\, {d \choose k} \,x_1^k \,x_2^{k^2} \cdots x_n^{k^n}.
\end{equation}
All the supports are contained in the support of $f_{m+n,n}$, which has codimension
$m$. We prove in Proposition \ref{prop:finite} that ${\mathbf 1}=(1,\dots,1)$ is an isolated solution
of system~\eqref{eq:fij}. Then
Theorem~\ref{conj} shows that the intersection multiplicity of system~\eqref{eq:fij}
 at ${\mathbf 1}=(1,\dots,1)$ is at least
  \[{n+m \choose n} \, = \, {N-1 \choose n}.\]
 In fact, we conjecture that this multiplicity equals  ${n+m \choose n}$ and we prove this result
in case  $n=2$ for arbitrary $m$ in Proposition~\ref{prop:n=2}. 
Interestingly, our proof involves a result about the interlacing of roots of two particular hyperbolic Hermite polynomials.

We expect that, for any dimension $n$ and codimension $m$, the highest possible multiplicity  of any convenient (see Definition~\ref{def:various0}) Laurent polynomial system
with support in a fixed configuration $\A$ as in~\eqref{eq:fis}
at a  non-degenerate  (see Definition~\ref{def:various}) multiple solution in the torus, 
equals  ${n+m \choose n}$. 
We show in
Theorem~\ref{th:integer} that, under these conditions, the maximal multiplicity at the isolated common root $\mb 1$ can be
realized with polynomials with {\em integer coefficients}.  
We present in Section~\ref{sec:Aside} upper bounds for this multiplicity.

\medskip

We also consider the {\em associated Gale dual system} $g_1(y) =\cdots=g_m(y) =0$ defined in~\eqref{Gale system} at the origin in  $\C^m$, where $m \ge 1$
is the codimension of the system.
Such a Gale system is associated to a matrix $B$ and a reduced matrix $D$ which are Gale dual to $A$ and $C$ respectively.
The intersection multiplicity of system~\eqref{eq:fis}
at $\mb q \in (\C^*)^n$ equals the intersection multiplicity of system~\eqref{Gale system} at  $\mb 0 \in \C^m$ (see Theorem~\ref{Gale duality for one solution}. 
We introduce all the necessary notions in Sections~\ref{sec:tools} and~\ref{sec:Gale}.  
We also define a new notion of $H$-duality and study the local intersection multiplicity in this setting in Sections~\ref{sec:H} and~\ref{sec:ubound}.
 We summarize the relations among the different systems in Theorems~\ref{th:dual} and in Subsection~\ref{ssec:resume}.
Under conditions for the Gale system of having a convenient system and a non-degenerate solution at the origin,
 we get in Theorem~\ref{T:t_{i}*} and its Corollary~\ref{C:t_{i}bis*}  the bound $(N-1)^{N-1-n}= (n+m)^m$.
 
Note that  in the case $m=1$, we have  $(n+m)^m={n+m \choose n} = n+1$. In Section~\ref{sec:circuit} we prove that  
in this case our bound $n+1$ is sharp and we combinatorially characterize in Theorem~\ref{pseudo Vandermonde} 
those configurations $\A$ for which this bound
is attained. They correspond to the vertices of a lattice polytope affinely equivalent to a cyclic polytope with $n+2$ vertices.  Interestingly, the same condition is necessary
and sufficient for the existence of $n+1$ positive real roots, as we show in~\cite{BDF}.

\medskip

Two interesting questions related to the occurrence of positive solutions of real polynomial system that arise from our work are the following:

\begin{question}\label{realcomplex}
Is there a relation between the upper bound for the (global) number of positive real solutions of a real 
sparse polynomial systems and the (local) number of  intersection multiplicity at an isolated
solution in the {\em complex} torus of a complex sparse polynomial system with the same support?
\end{question}

 \begin{question}
\label{construction}
Does there exist a small perturbation of the coefficients in~\eqref{eqn:poly} giving a system with real coefficients and with (at least) $m+n \choose n $ positive solutions?
\end{question}
We answer positively Question~\ref{construction} in the codimension $1$ case in Section~\ref{sec:circuit}.

\medskip

Moreover, we pose the following question/conjecture about the symmetry between
dimension and codimension. A similar question concerning the number of {\em positive} roots has
been raised and partially studied in~\cite{BSS18}.

Note that if the original system and the corresponding Gale dual system are convenient and non-degenerate, then we
get from  Corollary~\ref{C:t_{i}bis} and Proposition~\ref{C:t_{i}bis*}
 the following symmetric upper bound for the intersection multiplicity at an isolated common root in the torus:
\[ (m+n)^{\min\{n,m\}}.\]

\begin{question}\label{q:Mnm}
 Call $M(n,m)$ the highest possible multiplicity at a multiple point in the torus of a Laurent polynomial 
system as in~\eqref{eq:fis} in $n$ variables and codimension $m$.  
Does it hold that
\begin{equation} \label{eq:M}
M(n,m) = M(m,n)? 
\end{equation}
\end{question}
We expect that the answer to Question~\ref{q:Mnm} is positive.

One obstacle we need to overcome
to prove affirmatively Question~\ref{q:Mnm} is summarized in the following
simple example. Let $\A = \{ 0,1,2,3\}$ and $f = (x-1)^3$. Then $1$ is
a non-degenerate multiple root of $f$ of multiplicity $3$ but  it is not true that for any
choice of matrices $B$ and $D$ reduced such that the associated Gale dual system is {\em convenient},
it holds that $(0,0)$ is a non-degenerate
 solution of the Gale dual system. This is similar to questions arising from the computation of
 the  Milnor number of an isolated singularity~\cite{K}.

  We then pose our next question. We refer the reader to Subsection~\ref{ssec:resume} for the notation.

  \begin{question}\label{q:ndc}
For any choice of matrices $A,C$ and Gale dual matrices $B,D$ as above, do there always exist matrices $\tilde{A}$ and $\tilde{C}$ left equivalent to $A$ and $C$ and matrices
$\tilde{B}$ and $\tilde{D}$ right equivalent to $B$ and $D$ such that the systems
$\tilde{C} \cdot (x+\mb 1)^{\tilde{A}}=0$ and the Gale system $(\tilde{D} \cdot y) ^{\tilde{B}}=1$
are convenient and non-degenerate at the origin?
\end{question}
  A positive answer to Question~\ref{q:ndc} would imply a positive answer to Question~\ref{q:Mnm}. Moreover, it would imply (as remarked before Question~\ref{q:Mnm})
   that for any solution of system~\eqref{eq:fis} the intersection
  multiplicity is bounded by $(m+n)^{{\rm min}(m,n)}$.

\medskip

We end this introduction by explaining the rationale behind our definition of the system  $f_{m+1}(x)= \dots = f_{n+m}(x) =0$ in Theorem~\ref{conj}. The first basic observation is the
following. We recall that the multiplicity of a point $\mb q$ of a hypersurface $f=0$ (or the multiplicity
of $f$ at $\mb q$) is defined as $0$ if $f(\mb q) \neq 0$ and otherwise,
as the maximal $k \ge 1$ such $f$ and all its partial derivatives up to order $k-1$ vanish at $\mb q$. 
Given $\A= \{a_0, \dots, a_{N-1}\} \subset \Z^n$ and  a polynomial $f =\sum_{j =0}^{N-1} c_{i} \, x^{a_j}$  with support $\A$,
it is straightforward to check that $f(x) = \frac{\partial}{\partial x_1}(f)(x)= \dots = \frac{\partial}{\partial x_n}(f) (x) =0$ at  ${\mathbf 1} = (1, \dots, 1)$ (that is, the multiplicity of
$f$ at $\mb 1$ is at least $2$)
if and only if the vector $c=(c_0, \dots, c_{N-1})$ lies in the kernel of the matrix $A$ defined in~\eqref{eq:A}
with columns $(1, a_i) \in \Z^{n+1}$.
A first idea to get a system of $n$ polynomials with support $\A$ and maximal intersection multiplicity at $\mathbf 1$ is to choose, if possible, all of them with
coefficients in ${\rm ker}(A)$ (i.e. the vector of coefficients lies in the $A$-discriminant~\cite{GKZ}). As we require that $C$ has maximal rank $n$, 
this is only possible if $n \le m$.
 On the other side, one could try to pick polynomials
with support $\A$ not only with a singularity at $\mathbf 1$ but also with the highest possible multiplicity at $\mathbf 1$. 
Following~\cite{DP}, we define the following matrices $ A^{(k)}$
associated to $\A$ and a natural number $k$.

\begin{definition}\label{def:Ak}
Let $\mb r_0 =(1, \ldots, 1), \mb r_1,
 \dots, \mb r_n \in \Z^N$ be the row vectors of the matrix $A \in \Z^{(n+1) \times (n+m+1)}$.
For any $\alpha \in \N^{n}$, 
denote by $\mb r_\alpha \in \Z^{N}$ the vector obtained as the evaluation
of the monomial $x^\alpha$ in the points of $\A$. For any positive integer $k$,  we define the associated matrix $A^{(k)}$ as follows. 
Order the vectors $\{\mb r_\alpha : |\alpha| \le k\}$ by degree and then by lexicographic
order with $0 < 1 < \dots < n$. Then, $A^{(k)}$ is the ${n+k \choose k} \times N$ 
integer matrix with these  rows.
As $\mb r_0 =(1, \ldots, 1)$,  the first $n+1$ row vectors of $A^{(k)}$ 
are just the row vectors $\mb r_0, \dots, \mb r_n$ of $A$.
\end{definition}

For example, if $n=1$ and $\mb r_1=(0,1,\ldots,m+1)$, then
\[{ A^{(k)}}=\left(
\begin{array}{cccccc} 1&1&1&1& \cdots &1\\
0&1&2&3& \cdots &(m+1)\\
0& 1& 4&9& \cdots & (m+1)^2\\
\vdots &\vdots &\vdots &\vdots & \cdots & \vdots\\
0&1&2^k&3^k&\cdots &(m+1)^k
\end{array}\right)
\] 
is a Vandermonde matrix.  

Again, it is straightforward to check for any support $\A$ that $\mb 1$ is a singular point of $f$ of
multiplicity $k \ge 1$ if and only if the vector $c$ of coefficients of $f$
lies in ${\rm ker}(A^{(k-1)})$ but not in ${\rm ker}(A^{(k)})$.
In particular $A^{(1)} = A$. 
When $k \ge 3$, its vector of coefficients $c$ lies in the singular
locus of the associated  $A$-discriminant.
These considerations allowed us to obtain a system with support $\A$ with high intersection
multiplicity at $\mb 1$.  Understanding the maximum possible $k$ is related to understanding
the singularities of the cuspidal components of the singular locus of the discriminant \cite{jens}.
There are interesting restrictions given by the geometry of the support set. For instance, if $\A \subset \Z^2$ 
with codimension $3$ (that is, $\# \A=6$ and $\A$ is not contained in a line), then $A^{(2)}$ is a $6 \times 6$
matrix. There exists a non-zero vector in ${\rm ker}(A^{(2)})$ (that is, there exists a polynomial $f$ with
support $\A$ and multiplicity at $\mb 1$ at least $3$) if and only if there exists a non-zero vector
in the left kernel of $A^{(2)}$. This condition means that all the points in $\A$ lie in a curve of degree $2$. 
We close our paper giving a sharp bound in Theorem~\ref{thm:MultiplicityOfOnePoint} in Section~\ref{sec:onef} 
for uniform configurations on the multiplicity at a point of a polynomial $f$ with support $\A$ only
in terms of $n$ and $m$, independently of the degree of the hypersurface $f=0$, and the general bound in Theorem~\ref{th:2} for planar configurations.

\medskip

\noindent {\bf Acknowledgement:} We thank Carlos D'Andrea for his help with the Laguerre polynomials in the proof of Proposition~\ref{prop:n=2}. We are also thankful to the
referee for the detailed comments that helped us correct and improve the exposition.

\medskip

\section{Multiplicities and mixed covolumes} \label{sec:tools}

 In this section, we recall the notion of mixed covolume and its relation with the notion of local multiplicity. 
The main references for this section are \cite{E, HJS, KK, M}.

  Let $\mathcal P$ be the set of all polyhedra $\Delta$ in $\R_{\geq 0}^{n}$ such that 
\[
B_{\Delta} := \R_{\geq 0}^{n} \setminus (\Delta + \R_{\geq 0}^{n})
\]
is bounded, where the sign plus stands for Minkowski sum. A polytope in
$\mathcal P$ is said to be  \emph{convenient}.
Equivalently, a convex polytope is {\em convenient} if and only if it intersects every coordinate axis. 
The \emph {mixed covolume} is the unique symmetric multilinear function $\Vol^{\circ}: {\mathcal P}^n \rightarrow \R_{\geq 0}$
 such that $\Vol^{\circ}(\Delta,\Delta,\ldots,\Delta)$ is the usual Euclidean volume of $B_{\Delta}$ multiplied by $n!$. Note that 
 if $\Delta_{1},\ldots,\Delta_{n} \in {\mathcal P}$ are polytopes with vertices in $\Z^{n}$, then $\Vol^{\circ}(\Delta_{1},\ldots,\Delta_{n})$ is an integer number.

Recall that the usual mixed volume $\Vol$ of $n$ convex sets in $\R^{n}$ is the unique symmetric multilinear function $\Vol$ such that 
$\Vol(\Delta,\Delta,\ldots,\Delta)$ is the usual Euclidean volume of $\Delta$ multiplied by $n!$, for any convex set $\Delta$. 
We have the following covolume formula for polytopes $\Delta_{1},\ldots,\Delta_{n} \in {\mathcal P}$, which is similar to a well-known formula for the usual mixed volume:

\begin{equation}
\label{alternating sum}
\Vol^{\circ}(\Delta_{1},\Delta_{2},\ldots,\Delta_{n})=
\sum_{I \subset [n], I \neq \emptyset} (-1)^{|I|}\Vol(B_{\Delta_I}),
\end{equation}
where $\Vol$ stands for the Euclidean volume and $\Delta_{I}=\sum_{i \in I} \Delta_{i}$.
Note that if $\Delta_{1},\ldots,\Delta_{n} \in \mathcal P$ then $\Delta_{I} \in \mathcal P$ for any $I \subset [n]$.

We now introduce some important definitions.

\begin{definition} \label{def:various0}
Consider a polynomial $F \in \C[x_{1},\ldots,x_{n}]$.
\begin{itemize}
\item The {\em Newton polytope} $\Delta(F)$ of $F$ is the convex-hull of the set $\Supp(F)$ of all the monomials occurring in $F$ with non-zero coefficient.
\item The {\em Newton diagram} ${\mathcal D}(F)$ is the union of the {\em bounded} faces of the convex-hull of the union of the sets $\alpha+\R_{\geq 0}^{n} $ for $\alpha \in \Supp(F)$. 
\item
A polynomial $F$ and its Newton diagram are called \emph{convenient} if $\Delta(F)$ is convenient.  A system of polynomials is {\em convenient} if each polynomial is {\em convenient}.
\end{itemize}
\end{definition}
Thus, a poynomial $F$ is convenient if and only its Newton diagram ${\mathcal D}(F)$ intersects each coordinate axis, equivalently, if $\Delta(F)$ contains a point in each coordinate axis.

\smallskip

\begin{remark}\label{rem:diagram} It follows from Definition~\ref{def:various0} that the Newton diagram ${\mathcal D}(F)$ of a polynomial $F \in \C[x_1, \dots, x_n]$ is 
the union of the bounded faces of the convex-hull of the union of the sets $\beta+\R_{\geq 0}^{n} $ for $\beta \in \Z_{\ge 0}^n$ such that
$\partial^\alpha(F)({\mb 0})=0$ for any $\alpha < \beta$ and $\partial^\beta(F)({\mb 0}) \neq 0$. Thus, if $\beta \in \Z_{\ge 0}^n$ lies in the Newton diagram
$\mathcal D(F)$ then $\partial^\alpha(F)({\mb 0})=0$ for any $\alpha < \beta$. Moreover, if $\beta$ is a vertex of $\mathcal D(F)$ then $\partial^\beta(F)({\mb 0}) \neq 0$.
\end{remark}

 \begin{example}\label{ex:diagram}
Let $F$ be a polynomial with the following Newton diagram, with four integer points $\alpha_1=(0,4), \alpha_2=(1,2), \alpha_3 = (3,1), \alpha_3=(5,0)$:
\begin{center}
\definecolor{xdxdff}{rgb}{0.490196078431,0.490196078431,1.}
\definecolor{ccqqqq}{rgb}{0.8,0.,0.}
\definecolor{qqqqff}{rgb}{0.,0.,1.}
\begin{tikzpicture}[line cap=round,line join=round,>=triangle 45,x=1.0cm,y=1.0cm]
\draw[->,color=black] (-0.5,0.) -- (5.5,0.);
\foreach \x in {,1.,2.,3.,4.,5.}
\draw[shift={(\x,0)},color=black] (0pt,2pt) -- (0pt,-2pt) node[below] {\footnotesize $\x$};
\draw[->,color=black] (0.,-0.5) -- (0.,4.5);
\foreach \y in {,1.,2.,3.,4.}
\draw[shift={(0,\y)},color=black] (2pt,0pt) -- (-2pt,0pt) node[left] {\footnotesize $\y$};
\draw[color=black] (0pt,-10pt) node[right] {\footnotesize $0$};
\clip(-0.5,-0.5) rectangle (5.5,4.5);
\draw [line width=2.8pt,color=ccqqqq] (0.,4.)-- (1.,2.);
\draw [line width=2.8pt,color=ccqqqq] (1.,2.)-- (5.,0.);
\begin{scriptsize}
\draw [fill=qqqqff] (0.,4.) circle (1.5pt);
\draw[color=qqqqff] (0.0758203516988,4.14792343124) node {$\alpha_1$};
\draw [fill=qqqqff] (1.,2.) circle (1.5pt);
\draw[color=qqqqff] (1.0740440815,2.15147597164) node {$\alpha_2$};
\draw [fill=xdxdff] (5.,0.) circle (1.5pt);
\draw[color=xdxdff] (5.0776725892,0.155028512032) node {$\alpha_4$};
\draw [fill=xdxdff] (3.,1.) circle (1.5pt);
\draw[color=xdxdff] (3.07049154111,1.15325224183) node {$\alpha_3$};
\draw [fill=qqqqff] (0.,3.) circle (1.5pt);
\draw [fill=qqqqff] (0.,2.) circle (1.5pt);
\draw [fill=qqqqff] (0.,1.) circle (1.5pt);
\draw [fill=qqqqff] (0.,0.) circle (1.5pt);
\draw [fill=qqqqff] (1.,0.) circle (1.5pt);
\draw [fill=qqqqff] (2.,0.) circle (1.5pt);
\draw [fill=qqqqff] (3.,0.) circle (1.5pt);
\draw [fill=qqqqff] (4.,0.) circle (1.5pt);
\draw [fill=qqqqff] (2.,1.) circle (1.5pt);
\draw [fill=qqqqff] (1.,1.) circle (1.5pt);
\end{scriptsize}
\end{tikzpicture}
\end{center}
This is equivalent to the following facts: $F(0,0) =0$ and all the partial derivatives $\partial^{(i,j)}(F)(0,0)=(0,0)$ for $(i,j)$ any other 
 integer point in the nonnegative orthant strictly below any of the points $\alpha_1, \alpha_2, \alpha_3, \alpha_4$, that is, for $(i,j)$ belonging to the set
 $$\{(0,0), (1,0), (2,0), (3,0), (4,0), (0,1), (1,1), (2,1), (0,2), (0,3)\},$$ 
 and moreover the derivatives $\partial^{\alpha_1}(F), \partial^{\alpha_2}(F),\partial^{\alpha_4}(F)$ corresponding to the vertices of $\mathcal D(F)$ are non-zero at the origin. There is no
restriction on the partial derivative $\partial^{\alpha_3}(F)(0,0)$, but this point imposes in particular the condition that $\partial^{(2,1)}(F) (0,0) = 0$. The polynomial $F$ can have other non-zero monomials
{\em above} its Newton diagram,
 but $B_{\Delta(F)}$ is the non-convex region between the positive coordinate axis and $\mathcal D(F)$.
 \end{example}

\begin{definition}\label{def:various}
Consider a polynomial system $G_{1}=\cdots=G_{n}=0$ in $n$ variables having an isolated solution at the origin.
For any vector $\epsilon \in \R_{>0}^{n}$, we denote by
$G_i^\epsilon$
 the subsum of terms in $G_i$ for which the inner product $\langle \epsilon, a_\ell\rangle$ is
minimum. The system is called \emph{non-degenerate at the origin} if $G_1, \dots, G_n$ are convenient and 
none of the initial systems $G_i^\epsilon=0, i=1, \dots, n$, have a common solution in the corresponding torus.  In general, given a system
$g_1 = \dots = g_n=0$ with an isolated solution at a point $\mb q=(q_1,\ldots,q_n)$, the system is called \emph{non-degenerate at $\mb q$} if the system
$G_i(x) = g_i (x_1+q_1, \dots, x_n+q_n)=0, i=1, \dots, n$, has a non-degenerate solution at the origin.
\end{definition}

The non-existence of a common solution  of the systems  $G_i^\epsilon=0, i=1, \dots, n,$ in Definition~\ref{def:various} can be ensured with the non-vanishing of appropriate
face resultants~\cite{GKZ}. 

\smallskip

We state two basic known results.

\begin{theorem}\cite[Theorem IX.8 and Proposition IX.29]{M}, \cite[Theorem 1]{E}, \cite{HJS, MBook}
\label{main co-mixed}
Let $F_{1},\ldots, F_{n} \in \C[x_{1},\ldots,x_{n}]$ be polynomials which vanish at the origin. 
If $F_{1},\ldots, F_{n}$ are convenient, then
\[
\mult_0(F_{1},\ldots, F_{n}) \geq  \Vol^{\circ}(\Delta(F_1),\Delta(F_2),\ldots,\Delta(F_n)).
\]
Moreover, if the polynomial system $F_{1}=\cdots=F_{n}=0$ is non-degenerate at the origin, then
$$\mult_0(F_{1},\ldots, F_{n})= \Vol^{\circ}(\Delta(F_1),\Delta(F_2),\ldots,\Delta(F_n)).$$
\end{theorem}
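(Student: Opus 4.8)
\textbf{Proof proposal for Theorem~\ref{main co-mixed}.}

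The plan is to treat the two assertions separately, since the inequality is the ``soft'' direction and the equality requires the non-degeneracy hypothesis to rule out solutions escaping to the boundary. For the inequality $\mult_0(f_1,\dots,f_n)\ge \Vol^\circ(\Delta_1,\dots,\Delta_n)$, I would argue by a toric/tropical degeneration. First, since $\mult_0$ is upper semicontinuous under deformation of the coefficients and the mixed covolume depends only on the Newton polytopes $\Delta_i$ (not on the actual coefficients), it suffices to prove the claim for a sufficiently generic choice of coefficients with the given supports; in particular one may assume the $f_i$ are convenient with $\Delta(f_i)=\Delta_i$ and that the system is as generic as one likes away from the origin. Then I would introduce a one-parameter family $f_i^t(x)$, $t\in(0,1]$, with $f_i^1=f_i$, degenerating toward a system whose Newton diagrams are ``refined'' so that the local multiplicity at $\mathbf 0$ of the limit can be read off combinatorially — concretely, take a common simplicial refinement of the Newton diagrams and a compatible weight vector $\epsilon$, and track the roots near $\mathbf 0$ using a Puiseux parametrization $x(t)$. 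Each branch of the solution curve near the origin of the family contributes, and counting branches with multiplicity over a generic fiber gives $\Vol^\circ(\Delta_1,\dots,\Delta_n)$ by Equation~\eqref{alternating sum}, via inclusion–exclusion on the coordinate subspaces $B_{\Delta_I}$: the roots in a small polydisc around $\mathbf 0$ for the system supported on $\Delta_I$ number $\Vol(B_{\Delta_I})$ up to the BKK/Kushnirenko count, and the alternating sum over $I\subset[n]$ is exactly what isolates the contribution concentrated at $\mathbf 0$ rather than on a coordinate hyperplane. Conservation of number under the degeneration then yields $\mult_0(f_1,\dots,f_n)\ge \Vol^\circ(\Delta_1,\dots,\Delta_n)$.

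For the equality under the non-degeneracy hypothesis, I would show the reverse inequality by proving that no solution mass ``leaks'' onto a coordinate hyperplane in the degeneration, and that the limit system has exactly $\Vol^\circ$ solutions at $\mathbf 0$ counted with multiplicity, all of which persist. This is where the hypothesis that none of the initial systems $(f_i|_K)^\epsilon=0$ have a common solution in the corresponding torus $(\C^*)^K$ enters: it guarantees that for every proper coordinate face direction the truncated systems have no common zero, so along the degeneration the solution curves cannot converge to a point on a coordinate subspace with a coordinate equal to $0$ but a genuine torus solution in the remaining coordinates — equivalently, the tropical variety of the system near $\mathbf 0$ meets each coordinate cone only at the origin. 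Combined with Kushnirenko's theorem applied to the toric compactification associated to the (refined) Newton diagram — where the generic number of solutions in the open orbit equals the normalized volume of $B_{\Delta}$ — this pins the count to exactly $\Vol^\circ(\Delta_1,\dots,\Delta_n)$ and forces all of it to be concentrated at the origin, giving $\mult_0(f_1,\dots,f_n)\le \Vol^\circ(\Delta_1,\dots,\Delta_n)$, hence equality.

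Alternatively — and this is probably the cleaner route for the equality, bypassing explicit Puiseux bookkeeping — I would invoke the algebraic description of $\mult_0$ as $\dim_\C \mathcal{O}_{\mathbf 0}/(f_1,\dots,f_n)$ and compute this dimension via a flat degeneration to an initial ideal. Choosing the weight $\epsilon$ generically, the initial forms $\mathrm{in}_\epsilon(f_i)$ generate an ideal whose colength at $\mathbf 0$ is combinatorially computable and equals $\Vol^\circ(\Delta_1,\dots,\Delta_n)$ precisely by the covolume formula \eqref{alternating sum}; flatness of the degeneration (upper semicontinuity plus the non-degeneracy assumption ensuring the colength does not jump, i.e.\ no embedded component creeps in from a coordinate hyperplane) then gives the equality. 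I would cite \cite[Theorem IX.8, Proposition IX.29]{M} and \cite[Theorem 1]{E} for the precise form of this argument rather than reproduce it in full.

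\textbf{Main obstacle.} The genuinely delicate point is the equality case: controlling exactly how solutions behave under the degeneration and certifying that, under the non-degeneracy hypothesis, the full intersection number is concentrated at $\mathbf 0$ with no leakage onto the coordinate hyperplanes. This requires the right compatible refinement of the Newton diagrams together with the correct application of Kushnirenko's theorem to a non-smooth toric compactification, and a careful check that the face-resultant (non-degeneracy) condition is exactly what prevents the toric boundary from absorbing solution mass. The inequality direction, by contrast, is essentially a semicontinuity statement and should go through with little friction.
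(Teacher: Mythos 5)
The paper does not prove Theorem~\ref{main co-mixed}: it is introduced with the words ``We state two basic known results'' and attributed to \cite{M}, \cite{E}, \cite{HJS}, \cite{MBook}, so there is no in-paper argument to compare against. Your sketch outlines the standard degeneration/initial-ideal proofs from exactly those sources and ends by deferring to them, which matches the paper's treatment; the one caveat is that your intermediate ``inclusion--exclusion over the regions $B_{\Delta_I}$'' step is not a proof as written (the identity \eqref{alternating sum} is a formal consequence of multilinearity, not a root count over subsystems indexed by $I$), but since the result is quoted rather than proved in the paper, this is immaterial here.
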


The definitions of Newton diagram, the origin being a non-degenerate isolated solution and the 
local multiplicity at the origin can be extended to power series (centered at the origin). Theorem~\ref{main co-mixed} 
also holds in this case.

{\begin{remark}\label{rem:diagrambis} Given convenient polynomials $F_{1},\ldots, F_{n} \in \C[x_{1},\ldots,x_{n}]$,
note that the mixed covolume of their Newton polytopes is determined by the corresponding Newton diagrams. So, we could write
\[ \Vol^{\circ}(\Delta(F_1),\Delta(F_2),\ldots,\Delta(F_n)) =  \Vol^{\circ}(\mathcal D(F_1), \dots, \mathcal D(F_n)).\]
\end{remark}

\begin{proposition}(Decreasing property)\cite[Example IX.3]{M}
 \label{decreasing property}
Consider  polytopes $P_{i},Q_{i} \in {\mathcal P}$, $i=1,\ldots,n$, such that $P_{i} \subset Q_{i}$ for all $i=1,\ldots,n$. Then
\[
\Vol^{\circ}(P_{1},P_{2},\ldots,P_{n}) \geq \Vol^{\circ}(Q_{1},Q_{2},\ldots,Q_{n}).
\]
\end{proposition}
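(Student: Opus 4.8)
The plan is to reduce the statement, by symmetry of $\Vol^{\circ}$ and a telescoping argument, to the case where a single polytope is enlarged, and then to establish that one-polytope inequality using the interpretation of the mixed covolume as a local intersection multiplicity (Theorem~\ref{main co-mixed}) together with the semicontinuity of intersection multiplicity under small deformations.

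\smallskip
\noindent\textbf{Reduction to one polytope.} Since $\Vol^{\circ}$ is symmetric, it suffices to prove that for all $P,Q\in\mathcal P$ with $P\subseteq Q$ and all $\Delta_{2},\ldots,\Delta_{n}\in\mathcal P$ one has $\Vol^{\circ}(P,\Delta_{2},\ldots,\Delta_{n})\geq\Vol^{\circ}(Q,\Delta_{2},\ldots,\Delta_{n})$; granting this, the general statement follows by chaining
\[
\Vol^{\circ}(P_{1},\ldots,P_{n})\geq\Vol^{\circ}(Q_{1},P_{2},\ldots,P_{n})\geq\Vol^{\circ}(Q_{1},Q_{2},P_{3},\ldots,P_{n})\geq\cdots\geq\Vol^{\circ}(Q_{1},\ldots,Q_{n}).
\]
One could try to isolate the first slot directly in \eqref{alternating sum} by splitting the sum over $I$ according to whether $1\in I$; this rewrites the desired inequality as the monotonicity of $\Delta\mapsto\sum_{J\subseteq\{2,\ldots,n\}}(-1)^{|J|}\Vol(B_{\Delta+\Delta_{J}})$, but, because of the alternating signs and the fact that $P\subseteq Q$ only yields the termwise inclusion $B_{Q_{I}}\subseteq B_{P_{I}}$, this bookkeeping does not by itself conclude. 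A genuinely new input is needed, which is why I pass to multiplicities.

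\smallskip
\noindent\textbf{The one-polytope case.} The function $\Vol^{\circ}$ is continuous in the Hausdorff metric (by \eqref{alternating sum} it is a fixed $\Z$-linear combination of Euclidean volumes of Minkowski sums) and positively homogeneous of degree one in each slot (from multilinearity: $\Vol^{\circ}(2\Delta,\ldots)=2\Vol^{\circ}(\Delta,\ldots)$ since $\Delta+\Delta=2\Delta$, whence $\Vol^{\circ}(q\Delta,\ldots)=q\,\Vol^{\circ}(\Delta,\ldots)$ for rational $q>0$, and $\tfrac1k\Delta\in\mathcal P$ whenever $\Delta\in\mathcal P$). Approximating $P$ from inside and $Q,\Delta_{2},\ldots,\Delta_{n}$ suitably by rational polytopes and then rescaling, it therefore suffices to treat lattice polytopes. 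If $0$ lies in $Q$ or in some $\Delta_{i}$, then the corresponding upper set is all of $\R_{\geq 0}^{n}$, so $B_{Q+\Delta}=B_{\Delta}$ for every $\Delta$, and one checks from \eqref{alternating sum} that $\Vol^{\circ}(Q,\Delta_{2},\ldots,\Delta_{n})=0$; since $\Vol^{\circ}\geq 0$ the inequality is then trivial. So assume $0$ belongs to none of the polytopes, so that every polynomial with one of them as Newton polytope vanishes at the origin. Choose, with generic coefficients, convenient polynomials $g_{1},\ldots,g_{n}$ with $\Delta(g_{1})=P$ and $\Delta(g_{i})=\Delta_{i}$ forming a system that is non-degenerate at the origin (non-degeneracy is an open dense condition, by non-vanishing of the relevant face resultants, cf.\ the remark after Definition~\ref{def:various}); then Theorem~\ref{main co-mixed} gives $\mult_{0}(g_{1},\ldots,g_{n})=\Vol^{\circ}(P,\Delta_{2},\ldots,\Delta_{n})<\infty$, so the origin is an isolated zero. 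Now add to $g_{1}$ the monomials supported on the lattice points of $Q$ not lying in $P$, with generic coefficients multiplied by a small $\epsilon\neq 0$; the resulting $g_{1}^{\epsilon}$ is still convenient with $\Delta(g_{1}^{\epsilon})=Q$, still vanishes at the origin, and for small $\epsilon$ the origin remains an isolated zero of $(g_{1}^{\epsilon},g_{2},\ldots,g_{n})$. By conservation of intersection number under small deformations, $\mult_{0}(g_{1}^{\epsilon},g_{2},\ldots,g_{n})\leq\mult_{0}(g_{1},\ldots,g_{n})$, while by the inequality part of Theorem~\ref{main co-mixed}, $\mult_{0}(g_{1}^{\epsilon},g_{2},\ldots,g_{n})\geq\Vol^{\circ}(Q,\Delta_{2},\ldots,\Delta_{n})$. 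Combining these two inequalities yields $\Vol^{\circ}(Q,\Delta_{2},\ldots,\Delta_{n})\leq\Vol^{\circ}(P,\Delta_{2},\ldots,\Delta_{n})$, as desired.

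\smallskip
\noindent\textbf{Main obstacle and an intrinsic alternative.} The whole difficulty is concentrated in the one-polytope case. The intrinsic route — presumably the one of \cite{M} — is to produce a representation of the covolume of the form $\Vol^{\circ}(\Delta_{1},\ldots,\Delta_{n})=\tfrac1n\int h^{\circ}_{\Delta_{1}}\,dS^{\circ}(\Delta_{2},\ldots,\Delta_{n};\cdot)$, the integral over the positive directions, with $h^{\circ}_{\Delta}(u)=\min_{x\in\Delta}\langle x,u\rangle$ anti-monotone in $\Delta$ and a nonnegative mixed ``diagram surface-area measure'' $S^{\circ}$; monotonicity is then immediate. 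Equivalently, the point is that $t\mapsto\Vol(B_{t_{1}\Delta_{1}+\cdots+t_{n}\Delta_{n}})$ is a homogeneous polynomial with nonnegative mixed coefficients, which is exactly \eqref{alternating sum} together with $\Vol^{\circ}\geq 0$. Establishing this nonnegativity — rather than the elementary reductions above — is where the real work lies; in the argument sketched here it is packaged inside Theorem~\ref{main co-mixed} and the semicontinuity of multiplicity.
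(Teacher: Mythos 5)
The paper offers no proof of this proposition: it is imported verbatim from \cite[Example IX.3]{M}, so there is no internal argument to measure yours against, and the question is only whether your derivation stands on its own. It essentially does, and it takes a genuinely different route from the convex-geometric one the citation points to: instead of establishing monotonicity intrinsically (which, as you note, comes down to the nonnegativity of the mixed coefficients of $t\mapsto\Vol(B_{t_{1}\Delta_{1}+\cdots+t_{n}\Delta_{n}})$, i.e.\ to~\eqref{alternating sum} plus $\Vol^{\circ}\geq 0$), you deduce it from Theorem~\ref{main co-mixed} combined with upper semicontinuity of the local intersection multiplicity. The telescoping reduction, the degenerate case $0\in Q$ (where the terms of~\eqref{alternating sum} indeed cancel in pairs), and the core deformation step are all correct: conservation of the local degree in a small ball around an isolated zero gives $\mult_0(g_{1}^{\epsilon},g_{2},\ldots,g_{n})\leq\mult_0(g_{1},\ldots,g_{n})=\Vol^{\circ}(P,\Delta_{2},\ldots,\Delta_{n})$, while the inequality half of Theorem~\ref{main co-mixed} bounds the left side below by $\Vol^{\circ}(Q,\Delta_{2},\ldots,\Delta_{n})$ (and compactness of the analytic zero set in the ball guarantees the origin stays isolated, so $\Delta(g_{1}^{\epsilon})=Q$ since every vertex of $Q$ lying in $P$ is automatically a vertex of $P$). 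What your route buys is a short proof relative to the toolkit the paper already assumes; what it costs is that it is only as non-circular as the external proofs of Theorem~\ref{main co-mixed}, where monotonicity statements belong to the same circle of ideas, and that it silently invokes two standard but nontrivial facts: the \emph{existence} (not just openness) of non-degenerate convenient systems with prescribed Newton polytopes, and the semicontinuity of multiplicity. One technical point to repair in the approximation step: a rational polytope inscribed in $P$ need not stay convenient if $P$ meets some coordinate axis only in an irrational point, so you should instead approximate both $P$ and $Q$ from the outside, replace the approximant of $Q$ by its convex hull with that of $P$ to preserve the inclusion, and intersect with the nonnegative orthant before rescaling to lattice polytopes.
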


We show in the following example that the property of having a system with an isolated non-degenerate solution in the torus is not necessarily preserved under an invertible monomial change of variables.

\begin{example} \label{counterexample1}\label{ex:counter}
Set  $\A=\{(0,0), (1,0), (0,1), (2,0), (0,2)\}$,  and consider the following polynomials with support in $\A$:
\begin{equation*}
f_1 = x^2 + y^2 -2x-y+1, \quad 
f_2=-x^2+2y^2+2x-y-2.
\end{equation*} 
They have an isolated singular solution at ${\bf 1} = (1,1)$. Then, the translated polynomials
$F_1  = f_1(x+1, y+1) = x^2 + y^2 +y$, $F_2 = f_2(x+1, y+1) =-x^2 + 2y^2 + 3y$ have an isolated singular
solution at ${\bf 0} = (0,0)$. 

The translated system $F_1=F_2=0$  is convenient and
{\bf non-degenerate} at the origin. The multiplicity at the origin equals $2$, which is the
mixed covolume of twice the segment with vertices $\{(2,0), (0,1)\}$ (this segment is the common Newton diagram of $F_1$ and $F_2$).

We now perform the monomial change of coordinates $x=u^2v, y=u^5v^3$, with determinant of
the exponents equal to $1$, sending $\bf 1$ to $\bf 1$. If we set $g_i = f_i (u^2v,u^5v^3)$, $i=1,2$, we get
the polynomials
\begin{equation*}
g_1  =  u^{10}v^6-u^5v^3+u^4v^2-2u^2v+1, \\
g_2  =   2u^{10}v^6-u^5v^3-u^4v^2+2u^2v-2.
\end{equation*}
The multiplicity at $\bf 1$ of $g_1, g_2$ is also equal to $2$.
Now, we compute $G_i = g_i (u+1, v+1)$, $i=1,2$:
\begin{equation*}   G_1 = 5u+3 v + \dots,  \quad
G_2 =15 u + 9v + \dots,\end{equation*}
where the dots indicate higher order summands.
So, as $15 u + 9v = 3 (5u+3v)$, this system is convenient but {\bf degenerate} a the origin.
\end{example}

\section{Bounds for the local intersection multiplicity}\label{sec:Aside}

In this section we prove in Theorem~\ref{th:integer} that the maximal local multiplicity of an isolated solution (under generic hypotheses) can always be attained by
polynomials with integer coefficients. We also give in Theorem~\ref{T:t_{i}} an upper bound for this multiplicity only in terms of the dimension and codimension
of the system. Some interesting consequences are proved in Corollaries~\ref{C:t_{i}bis}, ~\ref{C:t_{i}} and~\ref{C:d}, and in  Proposition~\ref{P:bettern=2}.

We will use the setting and the notations in the Introduction.
Consider a system $f_1=\dots=f_n=0$
as in~\eqref{eq:fis}
of $n$ polynomial equations and $n$ variables, with exponents in a set
\[
\A=\{a_{0},a_{1},\ldots,a_{n+m}\} \subset \ZZ^{n}
\]
consisting of $n+m+1$ points and {\em not contained in an affine hyperplane}.
As before, we also denote this system as
$C \cdot x^A \, = \, 0$, 
where $C = (c_{ij}) \in \C^{n \times (n+m+1)}$ is the coefficient matrix of the system of rank $n$ and $A\in \Z^{(n+1) \times (n+m+1)}$ is the matrix
with columns $(1,a_0), \dots, (1, a_{n+m})$. Recall that
our assumption that the convex hull of the exponents $a_{i}, i=0, \dots, n+m$, has full dimension $n$ means that this matrix $A$ has maximal rank $n+1$.
As we mentioned, our aim is to bound the local
multiplicity of an isolated solution $\mb q \in (\C^*)^n$ of the system. As we pointed out, it is enough to consider the case $\mb q= \mb 1$, or equivalently
to assume that $\mb 0$ is an isolated solution of
 the system $ C \cdot (x+ \mb 1)^A=0$, that we also denote in~\eqref{eq:Fk} by $F_k(x) = f_k(x+{\mb 1}), \quad k= 1, \dots, n$.
 Moreover, by adding an integer vector $a$ to all points in $\A$ (or equivalently, by multiplying the polynomials with support $\A$ with the monomial $z^a$), we can assume without loss of generality that $\A$
  lies in the non-negative orthant $\Z_{\geq 0}^n$. Left multiplication of $A$ by an invertible matrix amounts to do a monomial change of coordinates for the complex torus.  Note this action preserves the
  multiplicity of the system at $\bf 1$ (here the coefficient matrix is fixed). Similarly, left multiplication of the coefficient matrix $C$ by an invertible matrix does not change
  multiplicity of the system at $\bf 1$. We will see in Propositions \ref{prop:conv} and \ref{prop:convenient} that
  we can always use any of these actions to make the system $ C \cdot (x+ \mb 1)^A=0$ convenient.
  
\subsection{Systems with integer coefficients}\label{ssec:integer}

We next  prove  that the maximal multiplicity at the non-degenerate common solution $\mb 1$ 
of a convenient sparse system can be achieved with a system with integer coefficients.
 
\begin{theorem}\label{th:integer}
Let $C \in \C^{n\times (n+m+1)}$ and $A \in \Z_{\geq 0}^{(n+1) \times(n+m+1)}$ such that the system $C \cdot (x+\mb 1)^A=0$ is convenient and non-degenerate at $\mb 0$ with local intersection multiplicity $\mu$. 
Then there exists an {\em integer} matrix
$C^0 \in \Z^{n \times (n+m+1)}$ such that the system $C^0 \cdot (x+{\mb 1})^A=0$ is convenient and non-degenerate at
$\mb 0$ of the same multiplicity $\mu$.
\end{theorem}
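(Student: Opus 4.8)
The plan is to exploit the fact that the two conditions we need to preserve --- convenience and non-degeneracy at the origin --- are both \emph{open} conditions on the coefficient matrix $C$, while the invariant we want to match --- the local intersection multiplicity $\mu$ --- is governed, via Theorem~\ref{main co-mixed}, by the Newton diagrams of the polynomials $F_k(x) = f_k(x+\mb 1)$, which are themselves determined by which partial derivatives of $F_k$ vanish at $\mb 0$. The key observation is that a coefficient vector $c_k = (c_{k0}, \dots, c_{k,n+m})$ produces a polynomial $F_k$ with a prescribed vanishing pattern of low-order derivatives at $\mb 0$ precisely when $c_k$ lies in a \emph{rational} linear subspace, namely $\ker(A^{(j)})$ for the appropriate $j$ (using the matrices $A^{(k)}$ of Definition~\ref{def:Ak}, whose entries are integers). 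So the set of matrices $C$ giving the same collection of Newton diagrams as our given $C$ is (an open subset of) a product of rational linear subspaces.

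Concretely, first I would record for each $k = 1, \dots, n$ the Newton diagram $\mathcal D_k := \mathcal D(F_k)$ of $F_k = f_k(x+\mb 1)$; since the system is convenient, each $\mathcal D_k$ meets every coordinate axis, and by Theorem~\ref{main co-mixed} together with non-degeneracy we have $\mu = \Vol^\circ(\Delta_1, \dots, \Delta_n)$ where $\Delta_k = \Delta(F_k)$ is the convex hull of $\mathcal D_k + \R_{\geq 0}^n$ truncated appropriately --- in any case $\mu$ depends only on the tuple $(\mathcal D_1, \dots, \mathcal D_n)$. Next I would describe the locus $\mathcal U$ of coefficient matrices $C'$ for which the translated system $C' \cdot (x+\mb 1)^A = 0$ has the \emph{same} tuple of Newton diagrams and is still convenient and non-degenerate at $\mb 0$. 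Membership of the $k$-th row $c'_k$ in the "right" family is an intersection of the linear conditions "$c'_k \in \ker(A^{(j_k - 1)})$" (forcing all derivatives of order $< j_k$ to vanish, where $j_k$ is the multiplicity of $F_k$ at $\mb 0$) with the open conditions "for each vertex $\beta$ of $\mathcal D_k$, the corresponding derivative $\partial^\beta F'_k(\mb 0) \neq 0$" and "no extra lower-degree vertices appear" --- the latter expressible as non-vanishing of finitely many linear forms with rational (indeed integer) coefficients in the entries of $c'_k$. Finally, non-degeneracy of the whole system is the non-vanishing of finitely many face resultants (as noted after Definition~\ref{def:various}), which are polynomials --- with integer coefficients --- in the entries of $C'$. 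Hence $\mathcal U$ is a nonempty (it contains $C$) Zariski-open subset of the rational linear subspace $V := \prod_k \ker(A^{(j_k-1)}) \subset \C^{n \times (n+m+1)}$.

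The conclusion is then a standard density argument: $V$ is defined over $\QQ$, so $V(\QQ)$ is dense in $V(\C)$ in the Euclidean topology; since $\mathcal U$ is a nonempty Euclidean-open subset of $V(\C)$, it must contain a rational point $C^1 \in V(\QQ)$, and after clearing denominators (multiplying each row by a suitable nonzero integer, which changes neither the Newton diagram nor non-degeneracy nor the solution set) we obtain an integer matrix $C^0 \in \ZZ^{n \times (n+m+1)}$ with $C^0 \in \mathcal U$. By construction the system $C^0 \cdot (x+\mb 1)^A = 0$ is convenient, non-degenerate at $\mb 0$, and has the same tuple of Newton diagrams $(\mathcal D_1, \dots, \mathcal D_n)$, hence by Theorem~\ref{main co-mixed} its local intersection multiplicity at $\mb 0$ equals $\Vol^\circ(\Delta_1, \dots, \Delta_n) = \mu$.

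I expect the main obstacle to be the bookkeeping in the middle step: making precise that "having the same Newton diagram" is genuinely a finite union of (open cell inside rational linear subspace) conditions. One must be careful that requiring a fixed set of low-order derivatives to vanish (a linear, rational condition) \emph{and} requiring that no unexpected new vertices of the Newton diagram appear closer to the origin are compatible with keeping the designated vertices' derivatives nonzero --- but this is automatic since $C$ itself witnesses it, so it suffices to intersect finitely many $\QQ$-linear equalities with finitely many $\QQ$-linear and polynomial inequalities and invoke that a nonempty Euclidean-open subset of a $\QQ$-defined affine space contains a rational point. A subtlety worth a sentence is that the face resultants cutting out non-degeneracy are a priori polynomials in the full matrix $C$; restricting them to the subspace $V$ keeps them polynomial with rational coefficients, so the openness of $\mathcal U$ in $V$ is unaffected.
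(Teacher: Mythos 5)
Your overall strategy is the same as the paper's: the vanishing pattern of derivatives of $F_k$ at $\mb 0$ that pins down the Newton diagram is a system of \emph{integer} linear conditions on the row $c_k$, convenience and non-degeneracy are open conditions (non-vanishing of integer linear forms at the axis vertices, and of face resultants), and one then picks a rational point in the resulting nonempty open subset of a $\QQ$-defined subspace and clears denominators. The paper does exactly this, exhibiting the integer matrix $A'_k$ whose rows are the linear forms $\partial^\alpha F_k(\mb 0)$ for all $\alpha$ lying coordinatewise below the Newton diagram, i.e.\ $\alpha \in \Sigma_k = \bigcup_j \{0 \le \alpha < \gamma^j\}$ where the $\gamma^j$ are the lattice points of $\mathcal D(F_k)$.

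There is, however, one concrete slip in your middle step. The correct linear subspace for the $k$-th row is $\ker(A'_k)$ as above, \emph{not} $\ker(A^{(j_k-1)})$: the latter only forces vanishing of all derivatives of \emph{total} order $< j_k$, whereas the Newton diagram is governed by the coordinatewise partial order, and typically $F_k$ has many further vanishing derivatives of total order $\ge j_k$ (e.g.\ all $\partial^{(0,\dots,0,\lambda)}F_k(\mb 0)=0$ for $\lambda$ below the axis intercept $\beta_{k,\ell}$, which can greatly exceed $j_k$). Correspondingly, your condition that ``no extra lower-degree vertices appear'' is a collection of linear \emph{equalities} (those derivatives must vanish), not a non-vanishing/open condition as you state; inside $\ker(A^{(j_k-1)})$ the locus with the prescribed diagram is a proper linear subspace, so your $\mathcal U$ is not open in your $V$ and the density argument does not directly produce a point of $\mathcal U$. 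Indeed, a generic rational perturbation within $\ker(A^{(j_k-1)})$ lowers the Newton diagram, enlarges the Newton polytope, and (by the decreasing property of the mixed covolume) strictly decreases $\mu$. The fix is exactly what the paper does: put \emph{all} of the conditions $\partial^\alpha F'_k(\mb 0)=0$, $\alpha \in \Sigma_k$, into the definition of the rational subspace $V$; then ``same Newton diagram'' within $V$ reduces to the genuinely open condition of non-vanishing at the vertices, and the rest of your argument (rational point, clear denominators, invoke Theorem~\ref{main co-mixed}) goes through verbatim.
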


\begin{proof}
Let $C \cdot (x+\mb 1)^A=0$ and consider the polynomials $f_1, \dots, f_n$ with
support in $\A$ associated with the rows $c_1, \dots, c_n$ of $C$.  
Denote  $F_k(z) = f_k(z+ \mb 1)$ as in~\eqref{eq:Fk}, $k=1, \dots, n$.  
Assume that these polynomials define a convenient and non-degenerate system with an isolated intersection point at $\mb 0$ with local intersection multiplicity $\mu$.
Then, $\mu = \Vol^{\circ}(\mathcal D(F_1), \dots, \mathcal D(F_n))$ by Theorem~\ref{main co-mixed}.
We will show that we can find {\em integer} coefficient vectors $(c^0_1,\dots, c^0_n)$ with the same Newton diagrams respectively (and are thus convenient) and which moreover define a non-degenerate system at $\mb 0$. The matrix $C^0$ with
these rows will then satisfy the statement of the theorem.

We have that $F_k, k=1, \dots, n$, is of the form
\[ F_k(z) = \sum_{j=0}^{m+n} c_{kj} \, (z+ \mb 1)^{a_j} =  \sum_{j=0}^{m+n} c_{kj} \, \prod_{i=1}^n \sum_{\ell_i =0}^{a_{ij}} {a_{i j} \choose {\ell_i}} z_i^{\ell_i},\]
and so its monomial expansion equals
\begin{equation}
\label{eq:F}
F_k(z) = \sum_{\alpha} \left( \sum_{j=0}^{n+m} c_{kj}  \prod_{0 \le \alpha_i \le a_{ij}} {a_{ij} \choose {\alpha_i}} \right)  \, z^\alpha = \sum_{\alpha} \frac{\partial^\alpha(F_k) (\mb 0)}{\alpha_1! \dots \alpha_n!} \, z^\alpha.\end{equation}
The sums are over the exponents $\alpha \in \Z_{\ge 0}^n$ such that for any $i=1, \dots, n$,  we have the inequality $\alpha_i \le  {\rm max}\{a_{ij}, j=0,\dots, n+m\}$. 
Note that each derivative $\partial^\alpha(F_k) (\mb 0)$  is a homogeneous
linear form in the vector of coefficients of the polynomial $f_k$.

Let $\{\gamma^1, \dots, \gamma^s\}$ be the integer points in $\mathcal D(F_k)$
 Note that all $\gamma^i$ are non-zero because $\mb 0$ is a solution.
 Following Remark~\ref{rem:diagram} and Example~\ref{ex:diagram}, we have that
 the vector of coefficients $c_k =(c_{kj})$ lies in the kernel of the integer matrix $A'_k$ 
with rows  indicated by
subindices  in the (finite) set $\Sigma_k = \cup_{j=0}^{s} \{ 0 \le \alpha < \gamma^j\}$. For $\alpha \in \Sigma_k$, the $\alpha$th row of $A'_k$ is 
 the  linear form $\partial^\alpha(F_k) (\mb 0)$ in the expansion~\eqref{eq:F}. Moreover, $\partial^\alpha(F_k) (\mb 0) \neq 0$ for any vertex of $\mathcal D(F_k)$ and since $F_k$ is
 convenient, there exist vertices in each coordinate axis.

Let $\mathcal L$ be the linear subspace
\begin{equation*}
\mathcal L \, = \, \{c'= (c'_1, \dots, c'_n) \in \C^{n(n+m+1)} \, : \, c'_k \in {\rm ker}(A'_k), k= 1,\dots, n\}.
\end{equation*}

As $c_k$ is a non-zero vector and $A'_k$ is an integer matrix, we can find non-zero vectors in its kernel with {\em integer} coefficients, for any $k=1, \dots, n$. Thus,  $\mathcal L$ has a basis of integer vectors.
We need to find vectors of coefficients that are not only integer and lying in $\mathcal L$, but they should also satisfy several nonvanishing conditions.

On one side, consider the polynomial
\[P_k = \prod_{\alpha  \text{ a vertex of }  \mathcal D(F_k)} \partial^\alpha(F_k) (\mb 0) .\]
Then, $P_k(c_k) \neq 0$ (where $P_k$ is seen as a homogeneous
linear form in the vector of coefficients of the polynomial $f_k$).
 Denote by $P= \prod_{k=1}^n P_k$.  Any polynomial system with coefficients  $c'=(c'_1, \dots, c'_n) \in L$  satisfying $P(c') \neq 0$ corresponds to polynomials with the same Newton diagrams
 as $F_1, \dots, F_n$.

 Moreover, the system $F_1, \dots, F_k$ being non-degenerate is expressed by the fact that  a finite set of {\em face resultants}~\cite{CDS} 
 $\{R_{j}, j \in \mathcal F\}$ corresponding to certain faces of the Newton diagrams, are non-zero on the coefficients $c=(c_1, \dots, c_k)$ of the system. 
 These face resultants are homogeneous polynomials. They are not identically zero  because they do not vanish on $c$. Denote by $R_{\mathcal F} (c')= \prod_{j \in \mathcal F} R_{j}(c'_1, \dots, c'_k)$.

Let $Q$ be the homogeneous polynomial in the coefficients $c'=(c'_1, \dots, c'_n)$ defined as
the product
\[ Q \, = \, P  \, \cdot \,  R_{\mathcal F}.\]
 Let $V \subset L$ be the open cone in $\mathcal L$ where $Q \neq 0$.  Then $V$ is non-empty because we are assuming that $c=(c_1,\dots, c_n) \in V$, thus $V$ contains a rational point and a fortiori, also a point with
 integer coordinates. So
 we can choose integer vectors of coefficients $(c^0_1,\dots,c^0_n) \in V$. Their associated polynomials have the same Newton diagrams $\mathcal D(F_1),
\dots, \mathcal D(F_n)$ respectively (and are in particular convenient). Moreover, they define a non-degenerate system at $\bf 0$, which implies that their intersection multiplicity at the origin equals $\mu$.
\end{proof}

We have the following immediate corollary of Theorem~\ref{th:integer}.

\begin{corollary} \label{cor:integer}
 Given a finite lattice configuration $\A \subset \Z^n$ with full dimensional convex hull, the maximal possible multiplicity of an isolated 
singularity at $\mb 1$ of a  system $f_1 =\dots = f_n=0$  with exponents in $\A$  as in~\eqref{eq:fij} for which the system $f_1(x+\mb 1)= \dots = f_n(x+ \mb 1) =0$
 is convenient and non-degenerate at the origin,   is attained by polynomials with integer coefficients. 
\end{corollary}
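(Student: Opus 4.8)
The plan is to deduce Corollary~\ref{cor:integer} directly from Theorem~\ref{th:integer} by a reduction argument. The corollary concerns the \emph{maximal} multiplicity over all systems with exponents in a \emph{fixed} configuration $A$ for which the shifted system $f_1(x+\mb 1)=\dots=f_n(x+\mb 1)=0$ is convenient and non-degenerate at the origin; so the first step is to fix such a maximizing system $f_1=\dots=f_n=0$, with coefficient matrix $C\in\C^{n\times(n+m+1)}$, and let $\mu$ be its intersection multiplicity at $\mb 1$. By definition this system satisfies exactly the hypotheses of Theorem~\ref{th:integer}: the system $C\cdot(x+\mb 1)^A=0$ is convenient and non-degenerate at $\mb 0$, with local intersection multiplicity $\mu$.

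Next I would simply invoke Theorem~\ref{th:integer} to produce an integer matrix $C^0\in\Z^{n\times(n+m+1)}$ such that $C^0\cdot(x+\mb 1)^A=0$ is convenient and non-degenerate at $\mb 0$ with the \emph{same} multiplicity $\mu$. Writing $f_k^0(x)=\sum_j c^0_{kj}x^{a_j}$, the system $f_1^0=\dots=f_n^0=0$ has exponents in $A$, has integer coefficients, and its shift $f_k^0(x+\mb 1)=(C^0\cdot(x+\mb 1)^A)_k$ is convenient and non-degenerate at the origin. Hence $\mb 1$ is an isolated singularity of this integer system with multiplicity exactly $\mu$. Since $\mu$ was the maximal such multiplicity, this integer system realizes the maximum, which is precisely the assertion of the corollary.

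The one point that needs a little care — and which I expect to be the only real obstacle, though a minor one — is the interplay between ``maximal'' and the constraint set. Theorem~\ref{th:integer} preserves convenience and non-degeneracy, so the integer system we produce lies in the same constraint class over which the maximum is taken; therefore passing from the maximizer to the integer system cannot decrease the multiplicity, and it cannot increase it either (it is equal). I should also note that the statement cites ``a system $f_1=\dots=f_n=0$ with exponents in $A$ as in~\eqref{eq:fij}'': here ``as in~\eqref{eq:fij}'' just refers to the generic shape of such systems, and one should make explicit that the rank hypothesis on $C$ (rank $n$) is part of what is assumed, and that it is inherited by $C^0$ — indeed if $C^0$ had rank $<n$ the shifted system could not have an isolated solution at $\mb 0$ with $\mu\ge 1$, contradicting non-degeneracy, so the rank condition is automatic. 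With these remarks the proof is a one-line consequence.

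\begin{proof}[Proof of Corollary~\ref{cor:integer}]
Let $f_1=\dots=f_n=0$ be a system with exponents in $A$, with coefficient matrix $C\in\C^{n\times(n+m+1)}$ of rank $n$, such that the shifted system $C\cdot(x+\mb 1)^A=0$ is convenient and non-degenerate at $\mb 0$ and has the maximal possible local intersection multiplicity $\mu$ among all such systems. By Theorem~\ref{th:integer}, there is an integer matrix $C^0\in\Z^{n\times(n+m+1)}$ such that the system $C^0\cdot(x+\mb 1)^A=0$ is again convenient and non-degenerate at $\mb 0$, with the same local intersection multiplicity $\mu$. The corresponding polynomials $f^0_k(x)=\sum_{j} c^0_{kj}\,x^{a_j}$ have integer coefficients and exponents in $A$; moreover $C^0$ has rank $n$, since otherwise $\mb 0$ could not be an isolated solution of the shifted system (contradicting non-degeneracy and $\mu\ge 1$). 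Thus the integer system $f^0_1=\dots=f^0_n=0$ lies in the class over which the maximum is taken and attains the value $\mu$, which proves the claim.
\end{proof}
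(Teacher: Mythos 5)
Your argument is correct and is exactly the intended one: the paper states this corollary as an immediate consequence of Theorem~\ref{th:integer} and gives no separate proof, and your reduction (fix a maximizing convenient, non-degenerate system, apply the theorem to replace its coefficient matrix by an integer one with the same Newton diagrams and the same multiplicity $\mu$) is that immediate consequence spelled out. Your added remarks on the rank of $C^0$ and on the integer system remaining in the constraint class are harmless and correct.
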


\subsection{Explicit bounds} \label{ssec:explicit}

We next prove our main Theorem~\ref{T:t_{i}} with an upper bound on the local multiplicity in terms of $n$ and $m$.
We will assume without loss of generality that $\A$ lies in the non-negative orthant.

As usual, we will denote by $e_1, \dots, e_n$ the canonical basis in $\R^n$. 
We will bound the coordinate of the intersection point with a coordinate axis of the Newton diagram of any polynomial $F_k=f_k(x+ \mb 1)$ in
a convenient system of polynomials $F_1, \dots, F_n$ with $f_1, \dots, f_k$ having support $\A$ and an isolated intersection point at $\mb 1$.
More precisely, given $k$ and $\ell$, we give  in Lemma~\ref{lem:axis} below, an upper bound for the first derivative $\partial^{\beta_{k,l} e_\ell}$ of $F_k$ with respect to $x_\ell$ which is non-zero at $\mb 0$ 
in terms of  $\A$. This will allow us to get an upper bound for the mixed covolume $\Vol^{\circ}(\Gamma_{1},\ldots,\Gamma_{n})$ of the Newton diagrams of $F_1, \dots, F_n$.
Under the additional assumption that the system of polynomials $F_1, \dots, F_n$ is non-degenerate at $\bf 0$, this will provide (using Theorem \ref{main co-mixed}) an upper bound for the local multiplicity at the origin
of this system, see Theorem \ref{T:t_{i}}.

We first simplify the description of the linear forms in the coefficients in~\eqref{eq:F}.  It is more convenient to
work with the toric (a.k.a. Euler) derivatives $\theta_i = x_i \partial^{e_i}$ instead of the standard partial derivatives. We write $\theta^\alpha = \theta_1^{\alpha_1} \dots \theta_n^{\alpha_n}$.
We use the partial order on $ {\ZZ}_{\geq 0}^{n}$
given by $\alpha \leq \beta$ if $\alpha_{i} \leq \beta_i$ for $i=1,\ldots,n$, and we write $\alpha < \beta$
when $\alpha \leq \beta$  and $\alpha \neq \beta$. For instance, we have $(1,2) < (2,2)$. 

 We introduce a useful notation.

\begin{definition} \label{def:LA}
Given matrices $C, A$  as before, for any $\alpha \in \Z_{\geq 0}^n$ we define
\begin{equation}\label{L_{f}}
L_{{k}}(\alpha) = \sum_{j=0}^{m+n}c_{kj} a_j^\alpha.
\end{equation}
We have $L_k(\alpha)= \theta^\alpha(f_k)(\mb 1)$, where as before $f_k= \sum_{j=0}^{n+m} c_{kj} \, x^{a_j}$.
\end{definition}

The following lemma is a straightforward consequence of Remark~\ref{rem:diagram} and the Leibniz rule:

 \begin{lemma}
\label{lem:tor}
Let $f \in \C[x_1, \dots, x_n]$  and $\mb q \in \C^n$ and
set $F$ in $\C[x_1, \dots, x_n]$ as $F(x) = f(x_1 + q_1, \dots, x_n + q_n) = f(x+\mb q)$.
Assume  $\beta \in \Z_{\ge 0}^n$. 

\begin{itemize}
\item
We have $\partial^\alpha(F)({\mb 0})=0$ for any $\alpha \in \Z_{\ge 0}^n$ such that $\alpha < \beta$
 if and only $\theta^\alpha(f)(\mb q)=0$ for any $\alpha \in \Z_{\ge 0}^n$ such that $\alpha < \beta$. In this case
\begin{equation*} \label{eq:theta}
\theta^\beta(f)(\mb q) \, = \, \mb q^\beta \, \partial^\beta(F)(\mb 0).
\end{equation*}
In particular, if $\mb q= {\bf 1}$ we have the equality $\theta^\beta(f)({\bf 1}) \, = \partial^\beta(F)(\mb 0)$.

\item  Assume $\mb q= {\bf 1}$.
If $\beta$ lies in ${\mathcal D}(F_{k})$ then $L_{{k}}(\alpha) =0$ for any $\alpha \in \Z_{\geq 0}^{n}$ such that $\alpha < \beta$ and
the coefficient of $x^{\beta}$ in the expansion~\eqref{eq:F}  of $F_{k}(z)$ is equal to
$\frac{L_{k}(\beta)}{\beta_{1}! \cdots \beta_{n}!}$. 
\end{itemize}
\end{lemma}
 
\smallskip

To simplify the notation, we will assume from now on, as before, that $\mb q= \mb 1$.
We now define three quantities that occur in the bounds of Theorem~\ref{T:t_{i}}.

\begin{definition} \label{def:rhost} 
Let
 $k, \ell \in \{1,\ldots,n\}$. For $u \in \{a_{\ell 0},\ldots, a_{\ell \,  m+n}\}$,
define
\begin{equation}
\label{E:sum}
\bar{c}_{k,\ell,u}=\sum_{a_{\ell j}=u} c_{k j}.
\end{equation}
We also set
\begin{equation}\label{E:cardCA}
\rho_{k,\ell} =\left| \{u \in  \{a_{\ell 0},\ldots, a_{\ell \,  m+n}\} \, , \bar{c}_{k,\ell,u} \neq 0\}\right|-1,
\end{equation}
\begin{equation}\label{E:cardC}
s_k = \left| \{  j \in \{0,\ldots, m+n\} \, , c_{k j} \neq 0\}\right|-1,
\end{equation}
and
\begin{equation}\label{E:card}
t_\ell = \left| \{ a_{\ell j}, \, j=0,\ldots, m+n\}\right|-1.
\end{equation}
\end{definition}

Note that $\rho_{k,\ell} \leq \mbox{min} (s_{k},t_{\ell})$.
Recall that the Newton diagram $\mathcal D(F_{k})$ is convenient when it intersects
each coordinate axis. Equivalently, $\mathcal D(F_{k})$ is convenient when $F_{k}$ is not identically zero
on any coordinate axis.

\begin{lemma}
\label{lem:axis}
Let $k,\ell=1,\ldots,n$.
The polynomial $F_{k}=f_k(x+\mb 1)$ is identically zero on the $\ell$-coordinate axis if and only if the sum
$\bar{c}_{k,\ell,u}$ defined in~\eqref{E:sum} vanishes for all $u \in  \{a_{\ell 0},\ldots, a_{\ell m+n}\}$, equivalently, when $\rho_{k,\ell}=-1$.

Assume that $F_{k}$ is not identically zero on the $\ell$-coordinate axis and let 
$\beta=\beta_{k,\ell} \cdot \e_{\ell}$ be the intersection point of $\mathcal D(F_{k})$ with the $\ell$-coordinate axis.
Then $\beta_{k,\ell} \le \rho_{k,\ell}\ \leq \mbox{min} (s_{k},t_\ell)$, where $\rho_{k,\ell}$, $s_{k}$ and $t_{\ell}$ are as in Definition~\ref{def:rhost}. 
\end{lemma}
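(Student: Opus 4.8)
The plan is to restrict everything to the $\ell$-coordinate axis and reduce the statement to the univariate Hájos bound recalled in the Introduction. First I would observe that restricting $F_k(z) = f_k(z+\mb 1)$ to the $\ell$-coordinate axis $\{z_i = 0 : i \neq \ell\}$ amounts, in the notation of Definition~\ref{def:LA} and using Lemma~\ref{lem:theta}/Lemma~\ref{lem:tor}, to looking at the univariate Laurent polynomial $g(t) = f_k(1,\dots,1,t,1,\dots,1)$ evaluated near $t=1$ — more precisely at $g(s+1)$, where the variable $s = z_\ell$ sits in the $\ell$-th slot. Explicitly, $g(t) = \sum_{j=0}^{m+n} c_{kj}\, t^{a_{\ell j}}$, since all the other coordinates of $a_j$ contribute a factor $1^{a_{ij}} = 1$. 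Collecting terms with the same exponent $u = a_{\ell j}$, we get $g(t) = \sum_{u} \bar{c}_{k,\ell,u}\, t^{u}$, so $g$ is the zero polynomial precisely when all $\bar{c}_{k,\ell,u}$ vanish, i.e.\ when $\rho_{k,\ell} = -1$; this gives the first assertion, using that $F_k$ is identically zero on the $\ell$-axis iff $g$ is identically zero (the evaluation $t\mapsto z_\ell+1$ being an isomorphism of polynomial rings).

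Next, assume $g \not\equiv 0$. By Lemma~\ref{lem:tor} (or directly by Lemma~\ref{lem:theta}), the intersection point $\beta_{k,\ell}\cdot\e_\ell$ of $\mathcal D(F_k)$ with the $\ell$-axis is exactly the order of vanishing of the univariate power series $s \mapsto g(s+1)$ at $s = 0$, equivalently the multiplicity of $t = 1$ as a root of $g$. Now I would invoke the Hájos bound: a univariate Laurent polynomial with $N'$ nonzero terms has every root of multiplicity at most $N' - 1$. The polynomial $g$, written in collected form $\sum_u \bar{c}_{k,\ell,u} t^u$, has exactly $\rho_{k,\ell} + 1$ nonzero terms, hence the multiplicity of $1$ as a root of $g$ is at most $\rho_{k,\ell}$, giving $\beta_{k,\ell} \le \rho_{k,\ell}$.

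Finally, the inequality $\rho_{k,\ell} \le \min(s_k, t_\ell)$ is the elementary counting remark already noted after Definition~\ref{def:rhost}: the number of distinct exponents $u$ with $\bar{c}_{k,\ell,u}\neq 0$ is at most the number of distinct values among $\{a_{\ell 0},\dots,a_{\ell\,m+n}\}$, which is $t_\ell + 1$, and it is also at most the number of indices $j$ with $c_{kj}\neq 0$, which is $s_k + 1$ (since $\bar{c}_{k,\ell,u}\neq 0$ forces at least one summand $c_{kj}\neq 0$); subtracting $1$ gives the claim. I expect the only genuinely delicate point to be making the reduction to the univariate setting fully rigorous — in particular verifying that the order of vanishing at $0$ of $g(s+1)$ really does compute the coordinate $\beta_{k,\ell}$ of the Newton diagram, which is where Lemma~\ref{lem:theta} and Lemma~\ref{lem:tor} do the work; everything else is bookkeeping plus the classical Hájos estimate.
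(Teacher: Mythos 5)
Your proposal is correct and follows essentially the same route as the paper: both arguments restrict to the $\ell$-coordinate axis, collect the coefficients into the sums $\bar{c}_{k,\ell,u}$, and bound the order of vanishing at $1$ of the resulting univariate polynomial by its number of nonzero terms minus one. The only cosmetic difference is that you invoke the H\'ajos bound as a black box for that last step, whereas the paper proves it in place by exhibiting the nonzero vector of collected coefficients in the kernel of a Vandermonde matrix with distinct nodes, forcing the number of rows $\beta_{k,\ell}$ to be less than the number of columns $\rho_{k,\ell}+1$.
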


\begin{proof}
Using Lemma~\ref{lem:tor} we get that $F_{k}$ is identically zero on the $\ell$-coordinate axis if and only if
$L_{k}(\lambda \cdot \e_{\ell})=0$ for any $\lambda \in \Z_{\geq 0}$, which is equivalent to 
\begin{equation}
\label{eq:vandeq}
\sum_{j=0}^{m+n} c_{kj} a_{\ell j}^{\lambda}=0, \; \mbox{for any $\lambda \in \Z_{\geq 0}$}.
\end{equation}
Assume for simplicity that $a_{\ell,0},\ldots,a_{\ell, t_\ell}$ are distinct so that
$\{a_{\ell 0}, a_{\ell 1},\ldots,a_{\ell, m+n}\}=\{a_{\ell 0}, a_{\ell 1},\ldots,a_{\ell, t_{\ell}}\}$.
Write $\bar{c}_{s}$ for $\bar{c}_{k,\ell, a_{\ell s}}$, $s=0,\ldots,t_{\ell}$.
For any positive integer $p$ consider the Vandermonde matrix
\begin{equation}
\label{E:Vandermonde}
V_{p}=\big(a_{\ell s}^{\lambda}\big)_{0 \leq \lambda \leq p, 0 \leq s \leq t_{\ell}}.
\end{equation}
Since $A$ has maximal rank, at least one coefficient  $a_{\ell s}$ is non-zero.
Therefore~\eqref{eq:vandeq} is satisfied if and only if the column vector $(\bar{c}_{s})_{s=0,\ldots,t_{\ell}}$
belongs to the kernel of $V_{p}$ for any positive integer $p$, equivalently, $(\bar{c}_{s})_{s=0,\ldots,t_{\ell}}$ is the zero vector.

Assume now that $\beta = \beta_{k,\ell} \cdot \e_\ell \in \mathcal D(F_{k})$. Then $\beta$ is a vertex of $\mathcal D(F_{k})$. Thus, by Lemma~\ref{lem:tor} we have
$L_{k}(\beta) \neq 0$ and $L_{k}(\lambda \cdot \e_{\ell})=0$ for any non-negative integer $\lambda < \beta_{k,\ell}$.
Let $\tilde{c}$ be the column vector obtained by deleting the zero coordinates of $(\bar{c}_{s})_{s=0,\ldots,t_{\ell}}$.
Then  $\tilde{c}$ is a non-zero vector which belongs to the kernel
of the Vandermonde matrix $\tilde{V}_{\beta_{k,\ell}-1}$  obtained by removing the corresponding columns of $V_{\beta_{k,\ell}-1}$. The number of columns of $\tilde{V}_{\beta_{k,\ell}-1}$ equals 
$\rho_{\ell,k}+1$ and its number of rows equals $\beta_{k,\ell}$. Since $a_{\ell 0},a_{\ell 1}, \ldots, a_{\ell, t_{\ell}}$ are distinct, this implies the inequality $\beta_{k,\ell} \leq \rho_{\ell,k}+1-1=\rho_{\ell,k}$.
\end{proof}

\begin{proposition}
\label{prop:conv}
Assume that $\bf 1$ is an isolated solution of the system $C \cdot x^{A}=0$. Then, there exists an invertible matrix $M \in \C^{n \times n}$ such that
the equivalent system $M\, C.(x+\mb1)^A=0$ is convenient.
\end{proposition}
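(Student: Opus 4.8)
The plan is to produce an invertible $M \in \C^{n\times n}$ so that each row of the new system, viewed as a polynomial $\tilde F_k(x) = \tilde f_k(x+\mb 1)$, is nonzero on every coordinate axis. By Lemma~\ref{lem:axis}, the $k$-th polynomial $\tilde F_k$ is identically zero on the $\ell$-coordinate axis precisely when, writing $\tilde c_{kj}$ for the coefficients of $\tilde f_k$ (a linear combination of the rows of $C$), the sums $\bar c_{k,\ell,u} = \sum_{a_{\ell j}=u}\tilde c_{kj}$ vanish for every value $u$ taken by the $\ell$-th coordinate of the exponents. So the plan is to phrase "bad on the $\ell$-axis" as a linear condition on the row vector used to form $\tilde f_k$, and then use a genericity/dimension argument to avoid all these conditions simultaneously.

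\textbf{Setup.} For each $\ell \in \{1,\dots,n\}$, let $u_{\ell,1},\dots,u_{\ell,t_\ell+1}$ be the distinct values of $a_{\ell 0},\dots,a_{\ell,m+n}$, and let $S_{\ell,p} = \{ j : a_{\ell j} = u_{\ell,p}\}$. Define the linear map $\pi_\ell : \C^{n+m+1} \to \C^{t_\ell+1}$ sending a coefficient vector $(c_j)$ to $\bigl(\sum_{j\in S_{\ell,p}} c_j\bigr)_p$. Then a polynomial with support $A$ and coefficient vector $v$ is identically zero on the $\ell$-axis iff $v \in \ker\bigl(\pi_\ell|_{\mathrm{row\,space}}\bigr)$, restricted to the relevant ambient space. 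I would first observe that the row space $R = \mathrm{rowspace}(C) \subseteq \C^{n+m+1}$ has dimension $n$, and that $\mb 1$ being an isolated solution forces, in particular, that no nonzero element of $R$ lies in $\bigcap_\ell \ker \pi_\ell$ — because a polynomial in the ideal that is identically zero on every coordinate axis would (together with conveniency failing everywhere) contradict isolatedness; more to the point I only need the weaker fact that for each $\ell$ there is \emph{some} element of $R$ not killed by $\pi_\ell$. If $\pi_\ell$ killed all of $R$, then every polynomial in the system would vanish on the $\ell$-axis, so the whole axis (a positive-dimensional set meeting the torus) would be contained in the zero set, contradicting that $\mb 1$ is isolated. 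Hence for each $\ell$, $W_\ell := \{ v \in R : \pi_\ell(v) = 0\}$ is a \emph{proper} linear subspace of $R$.

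\textbf{Choosing the rows of $M$.} I want to pick $n$ row vectors $w_1,\dots,w_n \in R$ that (i) are linearly independent (so that, written in the basis given by the rows of $C$, they assemble into an invertible $M$ with $w_k = $ $k$-th row of $MC$), and (ii) satisfy $w_k \notin W_\ell$ for every $k,\ell$. Condition (ii) is the union of $n$ proper subspaces $W_1,\dots,W_n$ of the $n$-dimensional space $R$; since $\C$ is infinite, $R \setminus \bigcup_\ell W_\ell$ is a nonempty Zariski-open subset of $R$. Condition (i) is also Zariski-open and nonempty in $R^n$. So I would argue: the set of $(w_1,\dots,w_n) \in R^n$ satisfying both is a nonempty Zariski-open set (intersection of the open set where $\det \neq 0$ with the open set $\prod_k (R\setminus\bigcup_\ell W_\ell)$, which is nonempty since $R\setminus\bigcup_\ell W_\ell \neq \emptyset$), hence contains a point. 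That point gives $M$. Finally I note that scaling rows or the fact that $MC$ might have some zero coefficients is harmless: conveniency is exactly the statement that each $\tilde F_k = (MC\cdot(x+\mb 1)^A)_k$ is not identically zero on any coordinate axis, which is (ii) via Lemma~\ref{lem:axis}; and $M$ invertible guarantees the new system has the same solution set and multiplicities.

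\textbf{Main obstacle.} The only genuinely delicate point is justifying that each $W_\ell$ is a \emph{proper} subspace of $R$ — i.e. that not every polynomial in the system vanishes identically on the $\ell$-th coordinate axis. I expect to handle this by contradiction: if $\tilde f_k$ vanishes on the $\ell$-axis for all $k$ (equivalently for all $v\in R$), then every point of that axis is a common zero of $f_1,\dots,f_n$ in $(\C^*)$-coordinates along that axis, so $\mb 1$ lies on a one-dimensional component of the solution variety, contradicting that it is an isolated solution of $C\cdot x^A = 0$. One must be slightly careful that "the $\ell$-coordinate axis of the $x$-variables after translating to $\mb 1$" corresponds to the line $\{x : x_i = 1 \text{ for } i\neq\ell\}$ in the original coordinates, which meets the torus $(\C^*)^n$ in a dense subset, so the contradiction with isolatedness is clean. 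Everything else is the standard "finite union of proper subspaces over an infinite field" argument plus bookkeeping with Lemma~\ref{lem:axis}.
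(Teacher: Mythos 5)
Your proof is correct and follows essentially the same route as the paper's: both use Lemma~\ref{lem:axis} to identify "identically zero on the $\ell$-axis" with a linear condition on the coefficient row, derive from isolatedness of $\mb 1$ that this condition cannot kill the whole row space of $C$ (else the axis through the origin would lie in the zero set of $F_1=\cdots=F_n$), and then take generic invertible linear combinations of the rows. Your explicit "finite union of proper subspaces of the row space" formulation is just a more formal rendering of the paper's "taking suitable (invertible) linear combinations of rows of $C$... repeating the procedure for $\ell=1,\ldots,n$."
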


\begin{proof}
Let $\ell \in \{1,\ldots,n\}$.
By Lemma ~\ref{lem:axis}, we have $\bar{c}_{k,\ell,u}=0$ for all $k=1,\ldots,n $ and all $u \in  \{a_{\ell 0},\ldots, a_{\ell m+n}\}$ if and only if
the $\ell$-coordinate axis is contained in the set of solutions of $F_{1}=\cdots=F_{n}=0$, which would imply that $\bf 1$ is not an isolated solution of the system $C \cdot x^{A}=0$.
So assuming that $\bf 1$ is an isolated solution of the system $C \cdot x^{A}=0$, we obtain the existence of $k$ and $u$ such that $\bar{c}_{k,\ell,u} \neq 0$.

Now, for any power series $F_1, \dots, F_n$, left multiplying their coefficient matrix by a generic invertible matrix $M$ produces  $n$ power series  with support equal to
the union of the supports of $F_1, \dots, F_n$, and thus for any generic invertible matrix $M$,  the system
$M\, C \cdot (x+ \mb 1)^A=0$ is convenient.
\end{proof}

\begin{proposition}
\label{prop:convenient}
If $a_{i,0},\ldots,a_{i,n+m}$ are distinct for $i=1,\ldots,n$ then the system $C.(x+\mb 1)^{A}=0$ is convenient.
In general, there exists an invertible matrix $M' \in \Z^{(n+1) \times (n+1)}$ such that
the system $C.(x+\mb1)^{M'A}=0$ is convenient.
\end{proposition}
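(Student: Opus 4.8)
The plan is to derive convenience from the criterion in Lemma~\ref{lem:axis}, which says that $F_k$ fails to vanish identically on the $\ell$-coordinate axis precisely when some sum $\bar c_{k,\ell,u}$ is nonzero. First I would treat the special case: assume $a_{i,0},\dots,a_{i,n+m}$ are pairwise distinct for every $i=1,\dots,n$. Fix $k$ and $\ell$. Then the $\ell$-th row of $A$ has all entries distinct, so $t_\ell = n+m$ and each ``bucket'' in the sum~\eqref{E:sum} contains exactly one index $j$; hence $\bar c_{k,\ell,u}=c_{kj}$ for the unique $j$ with $a_{\ell j}=u$. Since $C$ has rank $n$, no row of $C$ is identically zero, so there is some $j$ with $c_{kj}\neq 0$, giving $\bar c_{k,\ell,a_{\ell j}}\neq 0$. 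By Lemma~\ref{lem:axis} this means $F_k$ is not identically zero on the $\ell$-coordinate axis; as $k$ and $\ell$ were arbitrary, every Newton diagram $\mathcal D(F_k)$ meets every coordinate axis, which is exactly convenience of the system $C\cdot(x+\mb 1)^A=0$.

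For the general statement, the idea is to find an invertible integer matrix $M'\in\Z^{(n+1)\times(n+1)}$ so that $M'A$ has all rows (other than the row of ones, which we leave fixed) with pairwise distinct entries, after which the special case applies to the system $C\cdot(x+\mb 1)^{M'A}=0$. The point is that left-multiplying $A$ by $M'$ corresponds to the monomial change of coordinates sending $x^{a_j}$ to a new monomial, and it does not change whether the constant row $(1,\dots,1)$ appears (provided we insist $M'$ fixes the first coordinate), nor the rank of $C$, which is untouched. I would build $M'$ as $\begin{pmatrix}1 & 0\\ w & P\end{pmatrix}$ with $P\in\mathrm{GL}_n(\Z)$ and $w\in\Z^n$ chosen so that the $n$ linear forms $\langle \text{row of }(w\mid P),\,(1,a_j)\rangle$ separate the $n+m+1$ columns in each coordinate; since $A$ has maximal rank $n+1$, the columns $(1,a_j)$ are distinct in $\Z^{n+1}$, so a generic integer linear functional separates them, and one can split this into $n$ functionals each injective on the column set by a standard avoidance-of-finitely-many-hyperplanes argument over $\Z$. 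Unimodularity of $P$ can be arranged, e.g.\ by taking the rows of $(w\mid P)$ to be a generic lattice basis completion.

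The main obstacle I expect is the bookkeeping needed to guarantee simultaneously that (i) each of the $n$ new rows of $M'A$ has distinct entries, (ii) $M'$ is invertible over $\Z$ (or at least over $\Q$, which suffices if we only need a reduced/left-equivalent matrix), and (iii) the row of ones is preserved so that the support still has the right shape and the hypothesis ``not contained in a hyperplane'' is maintained. None of these is deep — each is a ``generic choice avoids finitely many bad configurations'' argument — but keeping them consistent requires care: the distinctness conditions (i) are $\binom{n+m+1}{2}$ nonvanishing conditions per row, cut out by the hyperplanes where two columns of $(1,a_j)$ are mapped to the same value, and one must check that a common integer point avoiding all of them together with the $\mathrm{GL}_n(\Z)$ condition exists. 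An alternative, cleaner route that sidesteps unimodularity is to note that for the conclusion we only need \emph{some} system left-equivalent to $C\cdot(x+\mb 1)^A=0$ (in the sense of Subsection~\ref{ssec:resume}) to be convenient, and left-equivalence allows arbitrary invertible rational matrices on $A$ together with the diagonal/row operations on $C$; combining Proposition~\ref{prop:conv} (which handles the coefficient side via an invertible $M$ acting on $C$) with a generic invertible $M'$ acting on $A$ gives convenience without any integrality fuss, and then one records the integer version as a refinement.
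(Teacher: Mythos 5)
Your proof is correct and takes essentially the same approach as the paper: the special case is handled exactly as in the paper via Lemma~\ref{lem:axis} (when the entries of each row are distinct, every sum $\bar c_{k,\ell,u}$ reduces to a single coefficient $c_{kj}$, and no row of $C$ is zero since $C$ has rank $n$), and the general case is reduced to it by an invertible integer matrix $M'$ fixing the row of ones and making the remaining rows of $M'A$ have pairwise distinct entries. The paper only asserts the existence of such an $M'$; your hyperplane-avoidance construction supplies it correctly (the separation relies on the columns $(1,a_j)$ being distinct, which holds because the $a_j$ are distinct points, and a unimodular block $P$ with all rows off the finitely many hyperplanes $\langle p, a_j-a_{j'}\rangle=0$ exists by the basis-completion argument you sketch).
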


\begin{proof}
Assume first that $a_{i,0},\ldots,a_{i,n+m}$ are distinct for $i=1,\ldots,n$ . Then by Lemma ~\ref{lem:axis}. the polynomial $F_{k}$ is identically zero on the $\ell$-coordinate axis if and only if
$c_{k,0}=\cdots=c_{k,n+m}=0$. But no row vector of $C$ is zero since $C$ has maximal rank and we conclude that $C \cdot (x+\rm 1)^{A}=0$ is convenient.
The general case follows from the previous one using the fact that since $A$ has maximal rank there exists an invertible matrix $M' \in \Z^{(n+1) \times (n+1)}$ such that $M'A$ has a first row of ones
and distinct coefficients in each other row.
\end{proof}

Consider the polytopes
\begin{equation}
\label{eq:Dpolytope}
\Gamma_{k}=\mbox{conv} \, \{\rho_{k,\ell} \cdot \e_{\ell}, \, \ell=1,\ldots,n\}, \quad k=1,\ldots,n,
\end{equation}
where $\rho_{k,\ell}$ is defined in~\eqref{E:cardCA}.

\begin{theorem}
\label{T:t_{i}}
Let $C \in \C^{n\times (n+m+1)}$ and $A \in \Z_{\geq 0}^{(n+1) \times(n+m+1)}$ such that the system $C \cdot (x+\mb 1)^A=0$ is convenient and non-degenerate at $\mb 0$ with local intersection multiplicity $\mu$.
Then
$$
\mu \leq  \Vol^{\circ}(\Gamma_{1},\ldots,\Gamma_{n}) \leq \mbox{min}\left(\prod_{i=1}^{n} s_i,\prod_{i=1}^{n} t_i \right)
$$
where $s_1, \ldots, s_{n}$ and $t_1,\ldots, t_{n}$  are defined in~\eqref{E:cardC} and~\eqref{E:card}.
\end{theorem}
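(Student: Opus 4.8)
The plan is to bound $\mu$ from above by the mixed covolume of the Newton diagrams $\mathcal D(F_1),\dots,\mathcal D(F_n)$, then replace each $\mathcal D(F_k)$ by the simplex $\Gamma_k$ using the decreasing property, and finally estimate the covolume of the $\Gamma_k$ by a product. First, since the system $C\cdot(x+\mb 1)^A=0$ is convenient and non-degenerate at $\mb 0$, Theorem~\ref{main co-mixed} gives the equality
\[
\mu \;=\; \Vol^{\circ}\bigl(\mathcal D(F_1),\dots,\mathcal D(F_n)\bigr),
\]
where I write $\Vol^\circ$ of the diagrams for $\Vol^\circ$ of the associated convenient Newton polytopes $\Delta(F_k)$.

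Next I would show the inclusion $\Gamma_k\subseteq \Delta(F_k)$, i.e.\ that each vertex $\rho_{k,\ell}\cdot\e_\ell$ of $\Gamma_k$ lies in the Newton polytope of $F_k$. By Lemma~\ref{lem:axis}, the intersection point of $\mathcal D(F_k)$ with the $\ell$-th coordinate axis is $\beta_{k,\ell}\cdot\e_\ell$ with $\beta_{k,\ell}\le\rho_{k,\ell}$; since the diagram is the lower-left boundary of $\Delta(F_k)+\R_{\ge0}^n$, every point $\lambda\cdot\e_\ell$ with $\lambda\ge\beta_{k,\ell}$ lies in $\Delta(F_k)$, in particular $\rho_{k,\ell}\cdot\e_\ell\in\Delta(F_k)$. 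Hence the simplex $\Gamma_k=\operatorname{conv}\{\rho_{k,\ell}\cdot\e_\ell:\ell=1,\dots,n\}$, being the convex hull of $n$ points of the convex set $\Delta(F_k)$, is contained in $\Delta(F_k)$. Both $\Gamma_k$ and $\Delta(F_k)$ lie in $\mathcal P$ (they are convenient: $\Gamma_k$ meets every coordinate axis by construction, and $\Delta(F_k)$ by convenireceived of the system), so Proposition~\ref{decreasing property} applies and yields
\[
\mu \;=\; \Vol^{\circ}\bigl(\Delta(F_1),\dots,\Delta(F_n)\bigr)\;\le\;\Vol^{\circ}(\Gamma_1,\dots,\Gamma_n).
\]

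For the final inequality I would bound $\Vol^\circ(\Gamma_1,\dots,\Gamma_n)$ by $\prod_i s_i$ and by $\prod_i t_i$. The cleanest route uses monotonicity again: since $\rho_{k,\ell}\le s_k$ for every $\ell$ (from $\rho_{k,\ell}\le\min(s_k,t_\ell)$, noted right after Definition~\ref{def:rhost}), each $\Gamma_k$ is contained in the simplex $s_k\cdot\operatorname{conv}\{\e_1,\dots,\e_n\}$; likewise, since $\rho_{k,\ell}\le t_\ell$, stacking these bounds coordinate-by-coordinate shows $\Gamma_k\subseteq\operatorname{conv}\{t_\ell\cdot\e_\ell\}$ as well. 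Now I need: for the standard simplex scaled by $d_1,\dots,d_n$ in the respective coordinate directions, $\Vol^\circ$ of the box-like complement equals $\prod d_i$ when the $d_i$ are equal, and more generally $\Vol^\circ(\Sigma_{d_1},\dots,\Sigma_{d_n})\le\prod d_i$ where $\Sigma_d=d\cdot\operatorname{conv}\{\e_1,\dots,\e_n\}$ and, in the mixed case, one takes $\Gamma_k\subseteq\Sigma_{s_k}$. Using multilinearity and the covolume formula~\eqref{alternating sum}, one computes $\Vol^\circ(\Sigma_{d_1},\dots,\Sigma_{d_n})=\prod_{i=1}^n d_i$ exactly (the complement $B_{\Sigma_{d_I}}$ is the box $\prod_{i\in I}[0,d_i]$ truncated by a hyperplane, and the alternating sum telescopes to the product); applying the decreasing property with $\Gamma_k\subseteq\Sigma_{s_k}$ gives $\Vol^\circ(\Gamma_1,\dots,\Gamma_n)\le\prod s_i$, and symmetrically $\le\prod t_i$.

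The main obstacle is the last step: verifying that $\Vol^\circ$ of the coordinate-scaled simplices equals (or is bounded by) the product $\prod d_i$. One must be careful because $\Vol^\circ$ is a covolume, not a volume, so intuition from mixed volumes can mislead; the safe approach is to expand via~\eqref{alternating sum}, write $B_{\Sigma_{d_I}}$ explicitly as $\{x\in\R_{\ge0}^n:\sum_{i\in I}x_i/d_i\le 1\}$, compute its Euclidean volume as $\tfrac{1}{n!}\prod_{i\in I}d_i$ times a combinatorial factor, and check that the signed sum over $\emptyset\ne I\subseteq[n]$ collapses to $\prod_{i=1}^n d_i$ — this is a finite-inclusion-exclusion identity that should be done once carefully. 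The inclusions $\Gamma_k\subseteq\Delta(F_k)$ and $\Gamma_k\subseteq\Sigma_{s_k},\Sigma_{t_\ell\text{-scaled}}$, and the two invocations of Proposition~\ref{decreasing property}, are then routine.
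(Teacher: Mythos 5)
Your proposal is correct and follows essentially the same route as the paper's proof: the equality $\mu=\Vol^{\circ}(\Delta(F_1),\ldots,\Delta(F_n))$ from Theorem~\ref{main co-mixed}, then two applications of Proposition~\ref{decreasing property} via $\Gamma_k$ and via the simplices $S_k=s_k\cdot\operatorname{conv}\{\e_1,\ldots,\e_n\}$ and $T=\operatorname{conv}\{t_\ell\cdot\e_\ell\}$, whose covolumes are $\prod_k s_k$ (by multilinearity) and $\prod_\ell t_\ell$ (directly, as $n!\,\Vol(B_T)$), exactly as you outline. The only cosmetic difference is that the paper states all inclusions at the level of $\Gamma_k+\R_{\geq 0}^{n}\subset\Delta(F_k)+\R_{\geq 0}^{n}$, which is what the covolume actually depends on, thereby sidestepping your slightly overreaching claim that every point $\lambda\cdot\e_\ell$ with $\lambda\geq\beta_{k,\ell}$ lies in the bounded polytope $\Delta(F_k)$.
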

\begin{proof}
By Theorem~\ref{main co-mixed} we have $\mu=\Vol^{\circ}(\Delta(F_{1}),\ldots,\Delta(F_{n}))$, where $\Delta(F_{i})$ stands for the Newton polytope of $F_{i}$.
Denote by $T$ the convex hull of $\{t_{\ell} \cdot \e_{\ell}, \, \ell=1,\ldots,n\}$. For $k=1,\ldots,n$,
denote by $S_{k}$ the convex hull of $\{s_{k} \cdot \e_{\ell}, \, \ell=1,\ldots,n\}$. Lemma ~\ref{lem:axis} yields the following inclusions for $k=1,\ldots,n$:

\begin{equation}
\label{eq:T}
T +\R_{\geq 0}^{n} \subset  \Gamma_{k}+\R_{\geq 0}^{n} \subset \Delta(F_{k})+\R_{\geq 0}^{n}
\end{equation}
and
\begin{equation}
\label{eq:S}
S_{k} +\R_{\geq 0}^{n} \subset  \Gamma_{k}+\R_{\geq 0}^{n} \subset \Delta(F_{k})+\R_{\geq 0}^{n}.
\end{equation}
Recall that the mixed covolume is decreasing (see Proposition~\ref{decreasing property}). Hence using~\eqref{eq:T} we get 
$\mu=\Vol^{\circ}(\Delta(F_{1}),\ldots,\Delta(F_{n})) \leq \Vol^{\circ}(\Gamma_{1},\ldots,\Gamma_{n})  \leq \Vol^{\circ}(T,\ldots,T)$.
By definition of the mixed covolume, we have
$\Vol^{\circ}(T,\ldots,T)= n! \cdot  \Vol(B_{T}) = t_1 \cdots t_{n}$
and we conclude that $\mu \leq \Vol^{\circ}(\Gamma_{1},\ldots,\Gamma_{n})  \leq t_1 \cdots t_{n}$.

Similarly,~\eqref{eq:S} implies $$\mu=\Vol^{\circ}(\Delta(F_{1}),\ldots,\Delta(F_{n})) \leq \Vol^{\circ}(\Gamma_{1},\ldots,\Gamma_{n})  \leq \Vol^{\circ}(S_{1},\ldots,S_{n}).$$
By multilinearity of the mixed covolume we have $\Vol^{\circ}(S_{1},\ldots,S_{n})=(\prod_{k=1}^{n} s_{k})\cdot \Vol^{\circ}(\Delta,\ldots,\Delta)=(\prod_{k=1}^{n} s_{k})\cdot n! \cdot  \Vol(B_{\Delta})$ 
where $\Delta$ is the convex hull of the points $\e_{\ell}$ for $\ell=1,\ldots,n$, and we conclude that $\mu \leq \Vol^{\circ}(\Gamma_{1},\ldots,\Gamma_{n})  \leq s_1 \cdots s_{n}$.
\end{proof}

\begin{remark}\label{rm:Haas}
Note that $s_{k}+1$ is the cardinality of the support of the $k$-th equation of the polynomial  system $C \cdot x^{A}=0$.
Interestingly enough the bound $s_1 \cdots s_{n}$ coincides with the Kouchnirenko bound which was conjectured to be a sharp bound for the number of positive solutions
of $C \cdot x^{A}=0$ assuming that $C$ has real coefficients. This global bound was disproved, for example in~\cite{Haas}. 
Note that by Theorem~\ref{T:t_{i}}, we prove that the Kouchnirenko bound is instead an upper bound for the local multiplicity at a {\em non-degenerate} 
multiple solution in the complex torus of a polynomial system with complex coefficients (compare with Question~\ref{realcomplex}).
\end{remark}

\subsection{Consequences and improvements of Theorem~\ref{T:t_{i}}} \label{ssec:conseq}
We deduce the following consequences and improvements of Theorem~\ref{T:t_{i}}: 
Corollaries~\ref{C:t_{i}bis} and~\ref{C:t_{i}}, Proposition~\ref{P:bettern=2} in the case $n=2$ and Corollary~\ref{C:d} in terms of a bound
for all exponents. We immediately get the following general upper bound.

\begin{corollary}
\label{C:t_{i}bis}
Let $C \in \C^{n\times (n+m+1)}$ and $A \in \Z_{\geq 0}^{(n+1) \times(n+m+1)}$ such that the system $C \cdot (x+\mb 1)^A=0$ is convenient and non-degenerate at $\mb 0$ with local intersection multiplicity $\mu$.
Then
$$
\mu \leq (n+m)^{n}
.$$
\end{corollary}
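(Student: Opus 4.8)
The plan is to derive Corollary~\ref{C:t_{i}bis} directly from Theorem~\ref{T:t_{i}}, using only the combinatorial constraint imposed by the fact that the configuration $A$ consists of exactly $n+m+1$ points. Theorem~\ref{T:t_{i}} already gives
$$\mu \le \min\Bigl(\prod_{i=1}^n s_i,\ \prod_{i=1}^n t_i\Bigr),$$
so it suffices to bound one of these two products by $(n+m)^n$. The cleanest route uses the $t_i$: by definition~\eqref{E:card}, $t_\ell = |\{a_{\ell j} : j = 0,\dots,m+n\}| - 1$, and since the $\ell$-th row of $A$ has exactly $n+m+1$ entries, the number of distinct values among them is at most $n+m+1$, hence $t_\ell \le (n+m+1) - 1 = n+m$ for every $\ell = 1,\dots,n$. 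Therefore $\prod_{i=1}^n t_i \le (n+m)^n$, and combining with Theorem~\ref{T:t_{i}} yields $\mu \le (n+m)^n$.

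First I would invoke Theorem~\ref{T:t_{i}}, whose hypotheses (convenient, non-degenerate at $\mb 0$, local intersection multiplicity $\mu$) are exactly those assumed in the corollary, to obtain $\mu \le \prod_{i=1}^n t_i$. Then I would record the elementary observation above: each $t_i \le n+m$ because the $i$-th coordinate of the $n+m+1$ exponent vectors $a_0,\dots,a_{n+m}$ takes at most $n+m+1$ distinct values. Multiplying the $n$ inequalities gives the claim. One could equally well argue with the $s_i$: since $C$ has $n+m+1$ columns, each row has at most $n+m+1$ nonzero entries, so $s_k \le n+m$ as well; either bound suffices, and I would present the $t_i$ version as it is slightly more intrinsic (it depends only on $A$, not on $C$).

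Since every step is immediate, there is essentially no obstacle here — the entire content lives in Theorem~\ref{T:t_{i}}, and the corollary is just the trivial numerical consequence of the fact that $N - 1 = n+m$. If anything, the only point worth stating carefully is \emph{why} $t_\ell \le n+m$: it is the pigeonhole-style remark that among the $N = n+m+1$ scalars $a_{\ell 0},\dots,a_{\ell,n+m}$ there can be at most $N$ distinct ones, so after subtracting $1$ we get at most $n+m$. I would keep the proof to two or three sentences.

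\begin{proof}
By Theorem~\ref{T:t_{i}} we have $\mu \le \prod_{i=1}^{n} t_i$, where $t_\ell$ is defined in~\eqref{E:card}. For each $\ell \in \{1,\ldots,n\}$, the values $a_{\ell 0},\ldots,a_{\ell,n+m}$ form a list of $n+m+1$ integers, so the number of distinct values among them is at most $n+m+1$; hence $t_\ell \le (n+m+1)-1 = n+m$. Multiplying these $n$ inequalities gives $\prod_{i=1}^{n} t_i \le (n+m)^{n}$, and therefore $\mu \le (n+m)^{n}$.
\end{proof}
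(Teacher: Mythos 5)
Your proof is correct and follows exactly the paper's own argument: the paper also deduces the corollary from Theorem~\ref{T:t_{i}} by noting that $t_{\ell} \leq m+n$ for $\ell=1,\ldots,n$. Your additional sentence spelling out why $t_\ell \le n+m$ (at most $n+m+1$ distinct values among the $n+m+1$ entries of a row of $A$) is a harmless elaboration of the same observation.
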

\begin{proof}
This follows from Theorem~\ref{T:t_{i}} noting that $t_{\ell} \leq m+n$ for $\ell=1,\ldots,n$.
\end{proof}

When $n=1$, any non-zero polynomial is convenient and non-degenerate and we recover the known bound that $\mu \le m+1 = N-1$.
When $n=2$ the bound $(n+m)^{n}$ in Corollary~\ref{C:t_{i}bis} equals $(m+2)^{2}$. We show that this bound can be improved. 

\begin{proposition}
\label{P:bettern=2}
Assume $\A \subset \Z_{\ge 0}^2$  a configuration of affine dimension $n=2$ with any codimension $m \ge 1$.  Let  $C \in \C^{2\times (m+3)}$ be a coefficient matrix such that the system $C \cdot (x+\mb 1)^A=0$ is convenient and non-degenerate at $\mb 0$ with local intersection multiplicity $\mu$.
Then
$$
\mu \leq (m+1)(m+2)
.$$
\end{proposition}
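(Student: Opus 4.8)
The plan is to refine the estimate of Theorem~\ref{T:t_{i}} by getting better control on the polytopes $\Gamma_k$ when $n=2$, rather than bounding each by the full simplex with vertices $t_\ell\cdot\e_\ell$. The key point is that for $n=2$ the codimension is $m$ and $N-1=m+2$, so a priori $t_1,t_2\le m+2$ and $s_1,s_2\le m+2$, giving only $(m+2)^2$. We want to show that the two parameters governing each equation cannot both be as large as $m+2$ simultaneously in a way that makes the covolume attain $(m+2)^2$; instead the product is at most $(m+1)(m+2)$.

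First I would recall that by Theorem~\ref{T:t_{i}} we have $\mu\le\Vol^\circ(\Gamma_1,\Gamma_2)$ where $\Gamma_k=\mbox{conv}\{\rho_{k,1}\cdot\e_1,\rho_{k,2}\cdot\e_2\}$, and that $\rho_{k,\ell}\le\min(s_k,t_\ell)$. Since $\Gamma_k$ is a segment in the plane joining $\rho_{k,1}\e_1$ to $\rho_{k,2}\e_2$, the set $B_{\Gamma_k}$ is a right triangle of legs $\rho_{k,1},\rho_{k,2}$, and $\Vol^\circ(\Gamma_1,\Gamma_2)$ can be computed explicitly from the alternating-sum formula~\eqref{alternating sum}: writing $a_\ell=\rho_{1,\ell}$, $b_\ell=\rho_{2,\ell}$, one gets $\Vol^\circ(\Gamma_1,\Gamma_2)=\Vol(B_{\Gamma_1})+\Vol(B_{\Gamma_2})-\Vol(B_{\Gamma_1+\Gamma_2})=\tfrac12(a_1a_2+b_1b_2-(a_1+b_1)(a_2+b_2))=\tfrac12(a_1b_2+a_2b_1)$ up to sign conventions — actually $\Vol^\circ(\Gamma_1,\Gamma_2)=a_1b_2+a_2b_1$ after multiplying by $n!=2$, equivalently it is the mixed area of the two triangles $B_{\Gamma_k}$. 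So it suffices to bound $\rho_{1,1}\rho_{2,2}+\rho_{1,2}\rho_{2,1}$, or more crudely $\Vol^\circ(\Gamma_1,\Gamma_2)\le\min(s_1,t_1)\min(s_2,t_2)+\min(s_1,t_2)\min(s_2,t_1)$ — but this is not yet enough, so I expect the real work is in a combinatorial lemma constraining the $\rho_{k,\ell}$.

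The crucial combinatorial input I would isolate is the following: because $A$ has only $N=m+3$ columns and the first row of $A$ is all ones, the supports in the two coordinate directions overlap. Concretely, if $t_1+t_2$ were too large — say both equalled $m+2$ — then the columns $(1,a_{1j},a_{2j})$ would be forced to be "too generic", contradicting that there are only $m+3$ of them in a rank-$3$ configuration; one shows $t_1+t_2\le m+3$, or a similar affine bound, by a counting/rank argument on the matrix $A$. I would prove such a bound and then also use that $\rho_{k,\ell}\le\min(s_k,t_\ell)$ together with $s_k\le m+2$ to conclude $\Vol^\circ(\Gamma_1,\Gamma_2)\le(m+1)(m+2)$; the worst case is when one $t_\ell=m+2$ and the other is $\le m+1$ (or the analogous statement forcing some $\rho_{k,\ell}\le m+1$ in the product). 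Alternatively — and this may be cleaner — I would argue on the $s_k$: if $s_1=s_2=m+2$ then both rows of $C$ have full support, and non-degeneracy on each coordinate axis (convenience) forces a Vandermonde-type obstruction showing the two Newton diagrams cannot both reach the corner $(m+2)\e_\ell$, reducing the product by one factor.

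The main obstacle will be proving the sharp combinatorial constraint that rules out the naive product $(m+2)^2$: one has to extract from "$\#A=m+3$, $A$ rank $n+1=3$, system convenient and non-degenerate" precisely the inequality that forces either $t_1t_2$ (or $s_1s_2$, or the mixed-covolume expression $\rho_{1,1}\rho_{2,2}+\rho_{1,2}\rho_{2,1}$) down to $(m+1)(m+2)$. I expect this to come from a careful look at the Vandermonde matrices $V_L$ appearing in the proof of Lemma~\ref{lem:axis}: the vanishing conditions $L_k(\lambda\e_\ell)=0$ for $\lambda<\beta_{k,\ell}$ simultaneously for $\ell=1,2$ cannot be satisfied with $\beta_{k,1}+\beta_{k,2}$ at its maximal value because that would use up more independent linear conditions on the $m+3$ coefficients $c_{k0},\dots,c_{k,m+2}$ than are available while still leaving $L_k(\beta_{k,\ell}\e_\ell)\neq 0$. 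Making this dimension count precise, and checking it interacts correctly with the mixed-covolume formula to yield exactly $(m+1)(m+2)$ rather than something weaker, is the technical heart of the argument.
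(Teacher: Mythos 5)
Your second alternative is the right one --- it is essentially the paper's argument --- but you leave it as something to be shown, attach an unnecessary hypothesis to it, and your other two ingredients contain errors that would prevent the proof from closing. First, the branch based on a constraint of the form $t_1+t_2\le m+3$ is a dead end: for a generic (e.g.\ cyclic) planar configuration all the $a_{1j}$ are distinct and all the $a_{2j}$ are distinct, so $t_1=t_2=m+2$; no such counting bound on $A$ exists. Second, your mixed-covolume formula is wrong: for segments $\Gamma_1=[(a_1,0),(0,a_2)]$ and $\Gamma_2=[(b_1,0),(0,b_2)]$ one has $\Vol^{\circ}(\Gamma_1,\Gamma_2)=\min(a_1b_2,a_2b_1)$, not $\tfrac12(a_1b_2+a_2b_1)$. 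Your computation implicitly takes $B_{\Gamma_1+\Gamma_2}$ to be the full triangle with legs $a_1+b_1$ and $a_2+b_2$, which holds only when the two segments are parallel; otherwise the Minkowski sum is a parallelogram whose lower-left vertex cuts off part of that triangle. This is not a harmless slip: in the extremal case $(a_1,a_2)=(m+1,m+2)$, $(b_1,b_2)=(m+2,m+1)$ your formula gives $m^2+3m+\tfrac52>(m+1)(m+2)$, whereas the correct value is $(m+1)^2$, so as written the argument does not reach the stated bound.

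The missing key step is short and does not require the delicate dimension count you defer to as the ``technical heart.'' Let $\beta_{k,\ell}$ be the coordinate where $\mathcal D(F_k)$ meets the $\ell$-th axis, so $\beta_{k,\ell}\le m+2$ by Lemma~\ref{lem:axis}. If $\beta_{1,\ell}=\beta_{2,\ell}=m+2$ for some $\ell$, then by the proof of Lemma~\ref{lem:axis} necessarily $t_\ell=m+2$ (all $a_{\ell j}$ distinct) and \emph{both} rows of $C$ lie in the kernel of the Vandermonde matrix $V_{m+1}$ of size $(m+2)\times(m+3)$ with distinct nodes; that kernel is one-dimensional, contradicting $\operatorname{rank} C=2$. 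No hypothesis $s_1=s_2=m+2$ is needed. Hence for each $\ell$ at least one of $\beta_{1,\ell},\beta_{2,\ell}$ is at most $m+1$, and by monotonicity of the covolume one is reduced to the two extremal cases $\{(m+1,m+1),(m+2,m+2)\}$ and $\{(m+1,m+2),(m+2,m+1)\}$, which give $(m+1)(m+2)$ and $(m+1)^2$ respectively. This is exactly how the paper proceeds.
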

\begin{proof}
By assumption the Newton diagrams $\mathcal D(F_{1})$ and $\mathcal D(F_{2})$ are convenient. Denote by $\beta_{k,\ell}$ the non-zero coordinate of the intersection point of $\mathcal D(F_{k})$ with the $\ell$-coordinate axis, for $k,\ell=1,2$. Then $\beta_{k,\ell} \leq m+2$ by Lemma~\ref{lem:axis}. Let $\ell \in \{1,2\}$ and assume $\beta_{1,\ell}=\beta_{2,\ell}=m+2$. Then using the proof of Lemma~\ref{lem:axis} we get $t_{\ell}=m+2$ and both row vectors of $C$ belong to the one-dimensional kernel of the Vandermonde matrix~\eqref{E:Vandermonde} with
$L=m+1$, which gives a contradiction since $C$ has full rank $2$. Thus, for $\ell=1,2$, we have three possibilities: $\beta_{1,\ell},\beta_{2,\ell} \leq m+1$, $\beta_{1,\ell} \leq m+1,\beta_{2,\ell}=m+2$
or $\beta_{2,\ell} \leq m+1,\beta_{1,\ell}=m+2$. Let $\Delta_{k}$ be the segment with vertices $(\beta_{k,1},0)$ and $(0,\beta_{k,2})$. By monotonicity of the mixed covolume (Proposition~\ref{decreasing property}), we get
$$\mu=\Vol^{\circ}(\Delta(F_{1}),\Delta(F_{2})) \leq \Vol^{\circ}(\Delta_{1},\Delta_{2}).$$
We want to prove that $\Vol^{\circ}(\Delta_{1},\Delta_{2}) \leq (m+1)(m+2)$. Using the monotonicity of the mixed covolume, up to permuting $\Delta_{1}$ with $\Delta_{2}$ and the coodinates axis, we are reduced to consider the following cases
\begin{enumerate}
\item $\Delta_{1}=[(m+1,0), (0,m+1)]$ and $\Delta_{2}=[(m+2,0), (0,m+2)]$ or
\item $\Delta_{1}=[(m+1,0), (0,m+2)]$ and $\Delta_{2}=[(m+2,0), (0,m+1)]$.
\end{enumerate}
A simple computation using Formula~\eqref{alternating sum} gives $\Vol^{\circ}(\Delta_{1},\Delta_{2})=(m+1)(m+2)$ in the first case and $\Vol^{\circ}(\Delta_{1},\Delta_{2})=(m+1)^{2}$ in the second case.
\end{proof}

\begin{remark}
The proof of Proposition~\ref{P:bettern=2} can be adapted to improve the general bound $(m+n)^{n}$ for any $(n,m)$ as follows.
Assume that for $k=1,\ldots,n$ the Newton diagram $\mathcal D(F_{k})$ is convenient. Denote by $\beta_{k,\ell}$ the non-zero coordinate of the intersection point of $\mathcal D(F_{k})$ with the $\ell$-coordinate axis. For any $\ell=1,\ldots,n$, let $\tilde{\beta}_{1,\ell} \leq \cdots \leq \tilde{\beta}_{n,\ell}$ be the integers $\beta_{1,\ell},\ldots, \beta_{n,\ell}$ ordered increasingly.
Then, we get $\tilde{\beta}_{r,\ell} \leq m+r$ for $r=1,\ldots,n$ using the fact the rows of $C$ belong to the kernel of a Vandermonde matrix~\eqref{E:Vandermonde} and are linearly independent.
By monotonicity of the mixed covolume, we have $\mu \leq \Vol^{\circ}(\Delta_{1},\ldots, \Delta_{n})$
where $\Delta_{k}$ is the convex hull of the points $\beta_{k,\ell} \cdot e_{\ell}$, $\ell=1,\ldots,n$. Using monotonicity of the mixed covolume, we see that, in order to bound from above $\mu$, it is sufficient to consider the case where $\tilde{\beta}_{r,\ell} =m+r$ for $r=1,\ldots,n$. We conjecture that the maximal mixed covolume $\Vol^{\circ}(\Delta_{1},\ldots, \Delta_{n})$ is $(m+1)(m+2)\cdots(m+n)$, which would lead to
$$\mu \leq (m+1)(m+2)\cdots(m+n).$$
However, we think that a sharp upper bound for $\mu$ is $m+n \choose n$.
\end{remark}

Since $C$ has maximal rank, by Gauss elimination (that is, left multiplication by an invertible matrix) we get a matrix $C$ which contains a invertible diagonal $n \times n$ submatrix. If this can be done without losing the condition of having a convenient system, we get the following bound.

\begin{corollary}
\label{C:t_{i}}
Let $C \in \C^{n\times (n+m+1)}$ and $A \in \Z_{\geq 0}^{(n+1) \times (n+m+1)}$ such that the system $C \cdot (x+\mb 1)^A=0$ is convenient and non-degenerate at $\mb 0$ with local intersection multiplicity $\mu$. If $C$ contains
an invertible diagonal $n \times n$ submatrix, then
$$
\mu \leq (m+1)^{n}
.$$
\end{corollary}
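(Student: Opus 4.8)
The plan is to exploit the same chain of inequalities as in the proof of Theorem~\ref{T:t_{i}}, but to feed in a better bound on the quantities $s_k$ once $C$ contains an invertible diagonal $n\times n$ submatrix. First I would permute the columns of $C$ (equivalently, relabel the exponents $a_0,\dots,a_{n+m}$) so that the invertible diagonal submatrix sits in the first $n$ columns; that is, after this relabelling $c_{kk}\neq 0$ for $k=1,\dots,n$ and $c_{k\ell}=0$ for all $\ell\in\{1,\dots,n\}$ with $\ell\neq k$. Since left multiplication of $C$ by an invertible matrix does not change the system of zero sets (only rescales the equations), and since we are told the resulting system $C\cdot(x+\mb 1)^A=0$ is still convenient and non-degenerate at $\mb 0$ with multiplicity $\mu$, this normalisation is harmless.

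Next I would read off the support size of each equation. With the above normalisation, the $k$-th row of $C$ has its nonzero entries among the single index $k$ in $\{1,\dots,n\}$ together with the $m+1$ indices $n+1,\dots,n+m+1$; hence at most $1+(m+1)=m+2$ nonzero entries, so $s_k\le m+1$ for every $k=1,\dots,n$, where $s_k$ is defined in~\eqref{E:cardC}. I would then invoke Theorem~\ref{T:t_{i}}, which gives
\[
\mu\;\le\;\prod_{i=1}^n s_i\;\le\;(m+1)^n.
\]
That is the whole argument: the only new input over Theorem~\ref{T:t_{i}} is the elementary count of the support sizes.

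I do not expect a genuine obstacle here, since the statement explicitly assumes the convenience and non-degeneracy survive the Gauss elimination, so we may simply cite Theorem~\ref{T:t_{i}}. The one point to be careful about is that the diagonal submatrix need not literally occupy columns $0,\dots,n-1$ of the original $C$; a column permutation (which is a relabelling of $A$, not of the variables) puts it there, and this does not affect $\Vol^\circ$, the hypotheses, or $\mu$. Thus the bound $\mu\le(m+1)^n$ follows.
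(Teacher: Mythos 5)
Your argument is correct and is essentially the paper's own proof: the whole content is the observation that each row of $C$ has at most one nonzero entry among the $n$ columns of the diagonal submatrix, hence at most $m+2$ nonzero entries in total, so $s_k\le m+1$, and then Theorem~\ref{T:t_{i}} gives $\mu\le\prod_k s_k\le(m+1)^n$. The column permutation you perform is harmless but not needed, since the definition of $s_k$ in~\eqref{E:cardC} only counts nonzero entries per row and does not depend on their position.
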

\begin{proof}
This follows from Theorem~\ref{T:t_{i}} noting that $s_{k} \leq m+1$ for $k=1,\ldots,n$ if $C$ contains such an invertible diagonal submatrix.
\end{proof}

We also get the following interesting consequence of Theorem~\ref{T:t_{i}}.

\begin{corollary}
\label{C:d}
Let $C \in \C^{n\times (n+m+1)}$ and $A \in \Z_{\geq 0}^{(n+1) \times(n+m+1)}$ such that the system $C \cdot (x+\mb1)^A=0$ is convenient and non-degenerate at $\mb 0$ with local intersection multiplicity $\mu$.
Let $d \in \Z_{>0}^n$ be such that the integer vectors $a_{0},\ldots,a_{n+m}$ are contained in $[0,d]^{n}$. Then
$$
\mu \leq d^{n}
.$$
\end{corollary}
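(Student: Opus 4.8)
The plan is to deduce this directly from Theorem~\ref{T:t_{i}}, exactly as was done for Corollaries~\ref{C:t_{i}bis} and~\ref{C:t_{i}}. The key observation is that the quantity $t_\ell$ from~\eqref{E:card} counts (one less than) the number of \emph{distinct} values appearing in the $\ell$-th row of the exponent matrix, i.e.\ among $a_{\ell 0}, \dots, a_{\ell, n+m}$. If all the integer vectors $a_0, \dots, a_{n+m}$ lie in the box $[0,d]^n$ with $d = (d_1, \dots, d_n)$, then for each fixed $\ell$ the values $a_{\ell j}$ all lie in $\{0, 1, \dots, d_\ell\}$, a set of cardinality $d_\ell + 1$. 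Hence $t_\ell \le d_\ell$ for every $\ell = 1, \dots, n$.

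Then I would simply invoke the inequality $\mu \le \prod_{i=1}^n t_i$ from Theorem~\ref{T:t_{i}} (the second factor in the minimum) to conclude
\[
\mu \;\le\; \prod_{i=1}^n t_i \;\le\; \prod_{i=1}^n d_i \;=\; d^n,
\]
where $d^n$ is shorthand for $d_1 d_2 \cdots d_n$. There is no real obstacle here: the entire content is the elementary counting bound $t_\ell \le d_\ell$, after which the inequality is immediate from the already-established theorem. The only point requiring a word of care is the interpretation of the notation $d \in \Z_{>0}^n$ and the meaning of $d^n$ and of $[0,d]^n$ — these should be read coordinatewise, so that $[0,d]^n$ denotes $\prod_{\ell} [0, d_\ell]$ and $d^n$ denotes $\prod_\ell d_\ell$ — but once that is fixed, the proof is one line. (If instead $d$ is meant as a single positive integer, the argument is the same: each row takes values in $\{0, \dots, d\}$, so $t_\ell \le d$ and $\mu \le d^n$ literally.)

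I expect the write-up to be essentially: ``Since $a_0, \dots, a_{n+m} \in [0,d]^n$, for each $\ell$ the integers $a_{\ell 0}, \dots, a_{\ell, n+m}$ all belong to $\{0, 1, \dots, d_\ell\}$, so $t_\ell \le d_\ell$. Theorem~\ref{T:t_{i}} then gives $\mu \le \prod_{i=1}^n t_i \le \prod_{i=1}^n d_i = d^n$.'' No induction, no covolume computation, no Vandermonde argument is needed beyond what Theorem~\ref{T:t_{i}} already supplies.
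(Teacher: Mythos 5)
Your proposal is correct and is essentially identical to the paper's own one-line proof: the paper also deduces the corollary from Theorem~\ref{T:t_{i}} by observing that the hypothesis forces $t_{\ell}\leq d$ for $\ell=1,\ldots,n$. Your remark on reading $d\in\Z_{>0}^{n}$, $[0,d]^{n}$ and $d^{n}$ coordinatewise is a fair clarification of the paper's slightly loose notation, but adds nothing to the mathematical content.
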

\begin{proof}
This follows from Theorem~\ref{T:t_{i}} noting that under our assumptions we have $t_{\ell} \leq d$ for $\ell=1,\ldots,n$.
\end{proof}

\begin{remark}
The bound $d^{n}$ in Corollary~\ref{C:d} is considerably smaller than the normalized volume $n! \Vol([0,d]^{n})=n! \cdot d^{n}$ which is the BKK bound for the number
of solutions counted with multiplicities of any polynomial  system with support contained in $[0,d]^{n}$.
\end{remark} 

\begin{example}
\label{ex:squarefirst}
(Case $n=2$ and $d=m=1$ in Corollary~\ref{C:d}.)
Let $$A=\left(\begin{array}{cccc}
1 & 1 & 1 & 1 \\
0 & 1 & 0 & 1 \\
0 & 0 & 1 & 1
\end{array}
\right).$$ It is not difficult to find a coefficient matrix
$C \in \C^{2 \times 4}$ such that $\bf 1$ is an isolated solution with multiplicity $2$ of the polynomial system $C\cdot x^{A}=0$.
In fact, by Lemma~\ref{L:linear part}, it is necessary to have $\mbox{det}(A'C'^{t})=0$ where $A'=\left(
\begin{array}{ccc}
1 & 0 & 1 \\ 0 & 1 & 1
\end{array}
\right)
$
and $C'=(c_{ij})_{1 \leq i \leq 2, 1 \leq j \leq 3}$ is the submatrix of $C$ with the $0$-th column removed.
It is easy to check that under the latter condition the system $C\cdot (x+\mb1)^{A}=0$ is always either non convenient or degenerate at the origin.
This shows that the condition that the system $C \cdot (x+\mb 1)^A=0$ is convenient and non-degenerate at $\mb 0$ cannot be dropped out in the above results. 
 This also motivates the use of Gale duality for polynomial systems introduced in Section~\ref{sec:Gale}. It is not difficult to check directly that 
the maximal multiplicity at $\mb 1$ of a system supported on $\A$ is equal to $2$ (see  Example~\ref{ex:square}).
\end{example}

\section{Local Gale duality for polynomial systems}\label{sec:G}
\label{sec:Gale}

In this section we recall how we can transform the system $f_1=\cdots= f_n=0$ in~\eqref{eq:fis} or~\eqref{eq:fisC} in $n$ variables with support $\A$ of codimension $m$ into a Gale dual system $g_1=\cdots= g_m$ in $m$ variables (see \cite{BS,BS08}). In fact we will state 
a local version of this duality and its basic properties in Theorem~\ref{Gale duality for one solution}.
We assume as before that ${\bf 1}$ is an isolated solution of~\eqref{eq:fis}. 

 Let $A$ be the matrix
defined in~\eqref{eq:A}:
\[
A = \left(\begin{array}{ccc} 1 & \dots & 1 \\ a_0 & \dots & a_{n+m} \end{array}\right),
\]
which has maximal rank $n+1$.
Let $B \in \Z^{(n+m+1)\times m}$ be any Gale dual matrix of $A$. That is, $B$ is a matrix with full rank be whose column vectors lie in the integer kernel  of $A$
(and are thus a basis of it over $\QQ$).
A Gale dual matrix of the coefficient matrix $C$ is a complex matrix $D \in \C^{(m+n+1) \times m}$ whose column vectors form a basis of the (right) kernel of $C$.
Clearly, $D$ can be chosen with coefficients in any subfield of $\C$ containing the coefficients of $C$.

As we assume that $\mb 1 =(1, \dots, 1)$ is 
a solution to~\eqref{eq:fis}. Then, 
\[
{\mb 1}^{A} = (1, \dots, 1)
\]
belongs to the kernel of $C$. Therefore, there exist vectors $\delta_{1},\ldots,\delta_{n+m} \in \R^{m}$
such that a Gale dual matrix of $C$ has the form
\begin{equation}
\label{Dreduced}
D=\left(\begin{array}{cc}
1 & \delta_{0} \\
1 & \delta_{1} \\
\vdots & \vdots \\
1 & \delta_{n+m}
\end{array}
\right), \, \text{ with } \delta_0 = {\mb 0}.
\end{equation}
Not every Gale dual matrix $D$ has this particular first row and column.

\begin{definition}\label{def:reduced}
 We will say that $D$ is \emph{reduced} when it has the form~\eqref{Dreduced} with $\delta_0=0$.
 \end{definition}
  Two Gale dual matrices of $C$  are obtained from each other by right multiplication by an invertible matrix. Two reduced Gale dual matrices of $C$  are obtained from each other 
  by right multiplication by an invertible matrix with first row and  first column vectors both equal to $(1,0,\ldots,0)$.

\smallskip
Let $y=(y_{1},\ldots,y_{m})$.
We associate polynomials of degree $1$ in $y$ to the \emph{rows} of the reduced matrix $D=(d_{ij}$):
\[
p_{i}(y)=d_{i0} + \sum_{j=1}^{m} d_{ij}y_{j}=1+\<\delta_i, y\>, \quad i=0,\ldots,n+m.
\]
Note that $p_{0}(y)=1$. Choosing another reduced Gale dual matrix of $C$ amounts to do a linear change of coordinates $y' = \xi(y)$, 
 so that $\<\delta_i, y\>=\<\delta_i', y'\>$, where $\delta_i'$ is the image of $\delta_{i}$ by an invertible linear transformation for $i=0,\ldots,m+n$.

\medskip
We introduce the Gale dual rational functions
\begin{equation}\label{rational}
\varphi_{k}(y)=\prod_{i=0}^{n+m} p_{i}(y)^{b_{ik}},  \quad k=1\ldots,m.
\end{equation}
Denote by $H$ the hyperplane arrangement $\cup_{i=1}^{n+m} \{p_{i}(y)=0\}$.
Consider the solutions of the system
\begin{equation}
\label{eq:varphi}
\varphi_{k}(y)=1,\quad  k=1\ldots,m,
\end{equation}
in the complement $\C^{m} \setminus H$.
Clearing the denominators, this amounts to look at the solutions
 in $\C^{m} \setminus H$ of the polynomial system
 
\begin{equation}\label{Gale system}
g_{k}(y)= \prod_{b_{ik}>0}p_{i}(y)^{b_{ik}} - 
\prod_{b_{ik}<0}p_{i}(y)^{-b_{ik}} =0, \quad  k=1\ldots,m.
\end{equation}
Note that the origin is a solution of~\eqref{Gale system} and~\eqref{eq:varphi} since $p_i(0) = 1$ for all $i$.

The following result follows essentially from \cite[Theorem~2.1]{BS08}. The only difference is that here
we don't require that the greatest common divisor of the maximal minors of $A$ nor the greatest common divisor of
the maximal minors of $B$ equal $\pm 1$ as in the global case when we translate all the solutions in the torus.

\begin{theorem} \label{Gale duality for one solution}
Assume that $\A=\{a_{0},a_{1},\ldots,a_{n+m}\} \subset \Z^n$ and let $D$ be a reduced Gale dual matrix of the coefficient matrix $C$. Then,
system~\eqref{eq:fis} has an isolated solution at $\mb 1$ 
of multiplicity $\mu$ if and only if the system
$g_{1}(y)=\cdots=g_{m}(y)=0$
has an isolated solution at the origin of the same multiplicity $\mu$. As a consequence,
the multiplicity at the origin of $g_{1}(y)=\cdots=g_{m}(y)=0$ depends neither on the choice of the reduced matrix $D$ nor on the choice of $B$.
\end{theorem}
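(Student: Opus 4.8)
The plan is to establish a local version of the Gale duality correspondence between the sparse system $C\cdot x^A=0$ near $\mathbf 1$ and the Gale dual system $g_1=\cdots=g_m=0$ near the origin, by following the global argument of \cite{BS08} and tracking where the integrality hypotheses on the gcd of maximal minors of $A$ and $B$ were used. The core of that argument is a multiplicative parametrization: write $x^{a_i} = p_i(y)$ for $i=0,\dots,n+m$, where $p_i(y)=1+\langle\delta_i,y\rangle$ come from the reduced Gale dual matrix $D$ of $C$. Because the columns of $C$ annihilate $x^A$ precisely when the vector $(p_0(y),\dots,p_{n+m}(y))$ lies in the kernel of $C$, and that kernel is the column span of $D$, the assignment $y\mapsto (p_i(y))_i$ produces a point in $\ker C$; conversely the relations encoded by $B$ (a Gale dual of $A$, so $A B = 0$) are exactly the monomial relations $\prod_i (x^{a_i})^{b_{ik}} = \varphi_k(y)$ that must equal $1$. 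So, set-theoretically, solutions of \eqref{eq:fis} in $(\mathbb C^*)^n$ near $\mathbf 1$ correspond to solutions of \eqref{eq:varphi} in $\mathbb C^m\setminus H$ near $\mathbf 0$.

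**Second I would** make this correspondence precise and local. The key point is that $A$ has maximal rank $n+1$, so the map $(\mathbb C^*)^n \to (\mathbb C^*)^{n+m+1}$, $x\mapsto x^A=(1,x^{a_1},\dots,x^{a_{n+m}})$ (using $a_0=0$), is a group homomorphism of tori with finite kernel; near $\mathbf 1$ it is a local analytic isomorphism onto a subtorus, because the differential has rank $n$. Composing with the affine chart $p\colon y\mapsto (p_i(y))_i$, which is a local analytic isomorphism near $\mathbf 0$ onto an $m$-dimensional affine subspace meeting the open torus, one gets local analytic isomorphisms between neighborhoods of $\mathbf 1$ and $\mathbf 0$ identifying the two zero sets. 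Since local intersection multiplicity of an isolated solution of a complete intersection is invariant under local analytic isomorphism of the ambient germ (it is the $\mathbb C$-dimension of the local quotient ring, or equivalently the local topological degree), the multiplicities agree. Here is where the gcd hypotheses drop out: in the global count of \cite{BS08} those gcd conditions guarantee the torus homomorphism $x\mapsto x^A$ is injective (not merely finite), so that no solutions are identified and the \emph{total} solution counts match exactly; but for a single fixed solution and its local multiplicity we only need the map to be a local isomorphism near that one point, which requires only maximal rank of $A$. I would also need to check that clearing denominators to pass from \eqref{eq:varphi} to \eqref{Gale system} changes neither the solution set in $\mathbb C^m\setminus H$ nor the local multiplicity at $\mathbf 0$, which is immediate since the $p_i(\mathbf 0)=1\neq 0$, so the $p_i$ are units in the local ring at $\mathbf 0$.

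**Finally**, the independence statements follow formally: two reduced Gale dual matrices $D$ of $C$ differ by right multiplication by an invertible matrix fixing the first row and column, which induces an invertible linear change of coordinates $y\mapsto y'$ with $\langle\delta_i,y\rangle=\langle\delta_i',y'\rangle$ and $p_i$ unchanged as functions, hence does not affect the multiplicity at $\mathbf 0$; and two choices of $B$ differ by right multiplication by an invertible integer (or rational) matrix, which replaces the system $(g_k)$ by monomial combinations $\prod_k \varphi_k^{\,\ell_{kj}}=1$ defining the same germ of variety in $\mathbb C^m\setminus H$, hence the same local multiplicity. The main obstacle I anticipate is the careful bookkeeping in the first two steps: namely verifying that the composite $y\mapsto p(y)\mapsto x$ is well-defined as a \emph{local analytic} (not merely formal) isomorphism near the chosen points and that it genuinely matches the two ideals, i.e.\ that the pullback of the ideal $(g_1,\dots,g_m)$ is the ideal $(f_1,\dots,f_n)$ localized at $\mathbf 1$ up to units — equivalently, that the $n$ equations $f_k$ and the $m$ equations $g_k$ cut out corresponding complete intersections of the right codimensions under the isomorphism. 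Once that identification of germs is in hand, the equality of multiplicities and all independence assertions are automatic.
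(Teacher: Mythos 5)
Your proposal is correct in substance and rests on the same underlying correspondence as the paper, namely the map $\psi$ sending a solution $x$ near $\mb 1$ to the unique $y$ with $x^{a_i}=p_i(y)$; but you handle the integrality issues by a genuinely different route. The paper treats \cite[Theorem 2.1]{BS08} as a black box: it invokes that theorem when $\Z A=\Z^n$ and the columns of $B$ are a $\Z$-basis of $\ker_\Z(A)$, and reduces the general case to this one via the invariant factor theorem, substituting $u_i=x_i^{t_i}$ and observing that $\mb 1$ is a simple root of the covering $u_i=x_i^{t_i}$. You instead re-prove a local version of Gale duality directly, arguing that the gcd hypotheses are only needed for \emph{global} injectivity of $x\mapsto x^A$, while the local multiplicity only requires this map to be an immersion at $\mb 1$, which follows from $A$ having maximal rank. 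This is valid and arguably more self-contained, at the cost of carrying out analytic bookkeeping that the citation would otherwise absorb.

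One point in that bookkeeping deserves care (you flag it yourself as the main obstacle). The phrase ``the pullback of the ideal $(g_1,\dots,g_m)$ is the ideal $(f_1,\dots,f_n)$'' cannot be taken literally: the two germs live in ambient spaces of different dimensions $n$ and $m$, and no single map pulls one ideal back to the other. The correct mechanism is that both systems compute the intersection multiplicity at $\mb 1$ of the $n$-dimensional subtorus $T_A=\mathrm{im}(x\mapsto x^{A})$ with the $m$-dimensional affine subspace $\mathcal L=\{(p_1(y),\dots,p_{n+m}(y)):y\in\C^m\}$ inside the ambient $(n+m)$-dimensional torus: the local isomorphism $x\mapsto x^A$ carries $(f_1,\dots,f_n)$ to the restriction to $T_A$ of the linear ideal of $\mathcal L$, while $y\mapsto (p_i(y))$ carries $(g_1,\dots,g_m)$ (up to units) to the restriction to $\mathcal L$ of $(z^{b_1}-1,\dots,z^{b_m}-1)$; the latter generates the ideal of $T_A$ locally at $\mb 1$ because the columns of $B$ are linearly independent, so these $m$ functions have independent differentials there even when the $b_k$ do not generate the saturated kernel lattice. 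Both local quotients are then identified with $\mathcal O_{\mb 1}/(I(T_A)+I(\mathcal L))$, and your argument closes. Your derivation of the independence on $B$ and $D$ also works (for $B$ one should argue with ideals rather than with the underlying germ of variety), though it is cleaner to observe, as the paper does, that the multiplicity equals $\mu$, which references neither $B$ nor $D$.
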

\begin{proof}
Multiplying each equation of ~\eqref{eq:fis} by $x^{-a_{0}}$ does not change the multiplicity at $\mb 1$, so
we can assume that $a_0=(0,\ldots,0)$. As we mentioned before, if the subgroup  
$\ZZ \A=\Z a_{1}+\cdots+ \Z a_{n+m}$ of $\ZZ^{n}$ generated by $ \A$ is equal to $\ZZ^{n}$ and the columns of $B$ are a $\ZZ$-basis of the integer kernel of $A$,  
the result follows from \cite[Theorem~2.1]{BS08}. 
We expand this a bit further. For any $x\in (\C^{*})^n$ such that $f_{1}(x) = \dots = f_{n}(x) = 0$, 
there exists a unique $y \in \C^{m} \setminus \cup_{i=1}^{n+m} \{p_{i}=0\}$ such that $x^{a_{i}}=p_{i}(y)$ for $i=0,\ldots,n+m$,
because the columns of $D$ are a basis of the kernel of $C$ and $x^{a_{0}}=1$.
This defines a map $\psi: x \mapsto y=\psi(x)$ from the set of solutions in the torus $(\C^{*})^n$ of $f_{1}= \dots = f_n= 0$ to the solutions in the torus
$\C^{m} \setminus \cup_{i=1}^{n+m} \{p_{i}=0\}$ of~\eqref{Gale system}. Then, by \cite[Theorem 2.1]{BS08}, 
the map $\psi$ is a bijection which preserves the multiplicities of the solutions, and the result follows then from the fact that $\psi (\mb 1)=(0,\ldots,0)$.

Assume now that $\ZZ \A \neq \Z^{n}$. Since $a_{0},\ldots, a_{n+m}$ are not contained in some hyperplane $\ZZ \A$ has full rank $n$. By the
invariant factor theorem for abelian groups, 
there exists a $\ZZ$-basis $(e_1,\ldots,e_{n})$ of $\ZZ^{n}$ and non-zero integers number $t_{1},\ldots, t_{n}$  such that 
$(t_{1}e_1,\ldots,t_{n}e_{n})$ is a $\ZZ$-basis of $\Z \A$. After a change of 
coordinates if necessary, we might assume without loss of generality that $(e_1,\ldots,e_{n})$ is the standard basis. Then, 
for $j=0,\ldots,n+m$ and $i=1,\ldots, n$, the quantity $\alpha_{ij}=\frac{a_{ij}}{t_i}$ is an integer, and setting 
$u_{i}=x_{i}^{t_{i}}$ we have $x^{a_{j}}=u^{\alpha_j}$ for $j=0,\ldots,n+m$, where 
$\alpha_j=(\alpha_{1j},\ldots,\alpha_{nj})$ and $u=(u_{1},\ldots,u_{n})$. Therefore, replacing $x^{a_{j}}$ by 
$u^{\alpha_j}$, the system $f_{1}= \dots = f_{n} = 0$ becomes a system with unknowns $u$, with the same 
coefficient matrix and with support $\{\alpha_{0}=0,\alpha_{1},\ldots,\alpha_{n+m}\} \subset \ZZ^{n}$ which satisfies $\Z \alpha_{1}+\cdots+ \Z \alpha_{n+m}=\ZZ^{n}$.
We are in the previous situation, and thus this system has a root at $\mb 1$ of multiplicity $\mu$ if and only if the system
$g_{1}(y)=\cdots=g_{m}(y)=0$
has a root at $(0,\ldots,0)$ of multiplicity $\mu$. To conclude, it remains to note that $\mb 1$ is root of multiplicity $1$ of the system $1=x_{i}^{t_{i}}$, $i=1,\ldots,n$.
The argument in case the $\ZZ$-span of the columns of $B$ is not $\ker_\Z(A)$ is similar. 
\end{proof}

\begin{remark}\label{rem:ND}
The fact that the multiplicity at the origin of $g_{1}(y)=\cdots=g_{m}(y)=0$ does not depend on the choice of $D$ can be seen directly since a different
 choice of a reduced Gale dual matrix $D$ of $C$ corresponds to a different choice of linear coordinates $y=(y_{1},\ldots,y_{m})$ and 
 the multiplicity at the origin is preserved by the composition on the right by a linear isomorphism.
Note, however, that the Newton diagrams of $g_{1},\ldots,g_{m}$ are dependent on the choice of linear coordinates and thus on the choice of $D$. They also depend on the choice of $B$.
\end{remark}

The following basic results will be useful. Consider the functions  $\varphi_{k}$ and $g_k$ defined in~\eqref{rational} and~\eqref{Gale system}.

\begin{lemma}\label{remark}
 Let $\beta \in  {\ZZ}_{\geq 0}^m$.
Then,  $(\partial^{\alpha}g_{k})(0)=0$ for any non-zero $\alpha \in  {\ZZ}_{\geq 0}^m$ such that
$\alpha \leq \beta$ if and only if
$(\partial^{\alpha}\varphi_{k})(0)=0$ for any non-zero $\alpha \in  {\ZZ}_{\geq 0}^m$ such that
$\alpha \leq \beta$.
\end{lemma}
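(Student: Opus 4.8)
The plan is to exploit the multiplicative relation between $g_k$ and $\varphi_k$ that is visible from their definitions. Recall that $\varphi_k(y) = \prod_{i=0}^{n+m} p_i(y)^{b_{ik}}$, which we may write as $\varphi_k = P_1/P_2$ with $P_1 = \prod_{b_{ik}>0} p_i^{b_{ik}}$ and $P_2 = \prod_{b_{ik}<0} p_i^{-b_{ik}}$, while $g_k = P_1 - P_2$. Both $P_1$ and $P_2$ are products of the affine forms $p_i(y) = 1 + \langle \delta_i, y\rangle$, hence $P_1(0) = P_2(0) = 1 \neq 0$, and both are holomorphic (indeed polynomial) near the origin. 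The key algebraic identity is $g_k = P_2 \cdot (\varphi_k - 1)$, so that $g_k$ and $\varphi_k - 1$ differ by multiplication by a function that is nonzero at $0$.

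First I would record that $\varphi_k - 1$ and $\varphi_k$ have the same higher derivatives at the origin except in order $0$: for every nonzero $\alpha$, $\partial^\alpha(\varphi_k - 1)(0) = \partial^\alpha \varphi_k(0)$, since the constant $1$ contributes only to $\partial^{\mathbf 0}$. So the statement is equivalent to: $\partial^\alpha g_k(0) = 0$ for all nonzero $\alpha \le \beta$ iff $\partial^\alpha(\varphi_k - 1)(0) = 0$ for all nonzero $\alpha \le \beta$. Now apply Lemma~\ref{basic} with $F = g_k$, $P_1 = P_2$ (the denominator product, nonzero at $0$), and $P_2^{\mathrm{there}} = \varphi_k - 1$, and with the exponent vector $\beta + \mathbf 1$ playing the role of the ``$\beta$'' in that lemma: the condition ``$\partial^\gamma(\cdot)(0) = 0$ for all $\gamma < \beta + \mathbf 1$'' is precisely ``$\partial^\gamma(\cdot)(0) = 0$ for all $\gamma \le \beta$.'' This handles both the nonzero orders $\alpha \le \beta$ and, trivially, order $\mathbf 0$, where $g_k(0) = 0 = (\varphi_k - 1)(0)$ automatically; so restricting to nonzero $\alpha$ changes nothing. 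Thus Lemma~\ref{basic} gives the equivalence directly.

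The main obstacle, such as it is, is bookkeeping rather than substance: one must be careful that Lemma~\ref{basic} is stated for the down-set $\{\gamma < \beta\}$ whereas here the natural index set is the down-set $\{\alpha \le \beta\}$, so the correct input to Lemma~\ref{basic} is the shifted vector $\beta + \mathbf 1 = (\beta_1 + 1, \dots, \beta_m + 1)$, for which $\{\gamma : \gamma < \beta + \mathbf 1\} = \{\gamma : \gamma \le \beta\}$. One should also note explicitly that the order-zero case is vacuous on both sides (both $g_k$ and $\varphi_k$ vanish to appropriate value at $0$, namely $g_k(0)=0$ and $\varphi_k(0)=1$), which is why the statement can be phrased with ``nonzero $\alpha$.'' Finally, although $\varphi_k$ is a rational function, it is holomorphic on $\C^m \setminus H$ and in particular in a neighborhood of the origin since $p_i(0) = 1 \ne 0$, so Lemma~\ref{basic} (stated for infinitely differentiable maps near a point) applies verbatim.
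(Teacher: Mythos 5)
Your decomposition is exactly the paper's: write $\varphi_k = P/Q$ with $P=\prod_{b_{ik}>0}p_i^{b_{ik}}$, $Q=\prod_{b_{ik}<0}p_i^{-b_{ik}}$, observe $g_k = Q\cdot(\varphi_k-1)$ with $Q(0)=1$, and invoke Lemma~\ref{basic}. So the route is the same one the authors take (their proof is the one-line version of yours).

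However, the specific device you use to reconcile the strict order in Lemma~\ref{basic} with the non-strict order in the statement is wrong for $m\geq 2$. You claim $\{\gamma : \gamma < \beta+\mathbf{1}\} = \{\gamma : \gamma \leq \beta\}$, but with the componentwise partial order $\gamma < \beta+\mathbf{1}$ means $\gamma_i \leq \beta_i+1$ for all $i$ and $\gamma \neq \beta+\mathbf{1}$; e.g.\ for $m=2$, $\beta=(0,0)$, the left-hand set is $\{(0,0),(1,0),(0,1)\}$ while the right-hand set is $\{(0,0)\}$. The identity only holds when $m=1$. As written, applying Lemma~\ref{basic} with exponent $\beta+\mathbf{1}$ requires vanishing of derivatives at orders outside $\{\alpha\leq\beta\}$, which is not part of the hypothesis. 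The repair is easy and worth making explicit: for each nonzero $\gamma\leq\beta$, apply Lemma~\ref{basic} with that $\gamma$ in the role of its $\beta$ --- the hypothesis gives $\partial^\alpha g_k(0)=0$ for all $\alpha<\gamma$ (including $\alpha=\mathbf{0}$, since $g_k(0)=0$), so the \emph{iff} clause yields $\partial^\alpha(\varphi_k-1)(0)=0$ for all $\alpha<\gamma$ and the \emph{moreover} clause yields $\partial^\gamma g_k(0)=Q(0)\,\partial^\gamma(\varphi_k-1)(0)$, hence $\partial^\gamma\varphi_k(0)=0$; the converse direction is symmetric using $\varphi_k-1 = Q^{-1}g_k$ near the origin. (Equivalently, note that the Leibniz induction behind Lemma~\ref{basic} works over any downward-closed set of exponents, in particular over $\{\alpha\leq\beta\}$.) The rest of your write-up --- that $\varphi_k$ and $\varphi_k-1$ have the same derivatives in all nonzero orders, and that $\varphi_k$ is holomorphic near $0$ because $p_i(0)=1$ --- is correct and is the right level of care.
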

\begin{proof}
Set $P=\prod_{b_{ik}>0}p_{i}^{b_{ik}}$ and
$Q=\prod_{b_{ik}<0}p_{i}^{-b_{ik}}$. Then, $\varphi_{k}=\frac{P}{Q}$ and $g_{k}=P-Q=Q \cdot (\varphi_{k}-1)$, and the result follows from the Leibniz rule since $Q(0)=1$.
\end{proof}

Recall that $e_j$ is the $j$-th vector in the standard basis, so $\partial^{e_{j}}$ stands for the partial derivative with respect to $y_{j}$ and 
we will often use the shorter notation $\partial_{j}$ for this derivative. Consider the rational functions $\varphi_{k,j}$ defined by the equality
$$ \varphi_{k,j}(y) \, = \,  \frac {(\partial_{j}\varphi_{k})(y)}{\varphi_{k}(y)}.$$
Explicitely, we have

\begin{equation}\label{der}
\varphi_{k,j}(y)=\sum_{i=1}^{n+m}
b_{ik} \cdot \frac{d_{ij}}
{p_i(y)}.
\end{equation}

\begin{lemma}\label{useful}
Let $\beta \in  {\ZZ}_{\geq 0}^{m}$ and assume $e_{j} \leq \beta $ for some $j \in \{1,\ldots,m\}$.
Then $(\partial^{\alpha}g_{k})(0)=0$ for any $\alpha \in  {\ZZ}_{\geq 0}^m$ such that
$e_j \leq \alpha \leq \beta$ if and only if
$$
(\partial^{\alpha-e_{j}}\varphi_{k,j})(0)=0$$ for any $\alpha$ such that
$e_j \leq \alpha\leq \beta$.
\end{lemma}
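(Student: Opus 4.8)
The plan is to reduce the statement to a single product identity and then invoke Lemma~\ref{basic}. First I would reparametrize by setting $\gamma=\alpha-e_j$: the range $e_j\le\alpha\le\beta$ becomes $0\le\gamma\le\beta-e_j$, and $\partial^{\alpha}g_k=\partial^{\gamma}(\partial_j g_k)$, so the assertion is equivalent to
\[
(\partial^{\gamma}\partial_j g_k)(0)=0\ \text{for all }\gamma\le\beta-e_j
\quad\Longleftrightarrow\quad
(\partial^{\gamma}\varphi_{k,j})(0)=0\ \text{for all }\gamma\le\beta-e_j .
\]
The point of the reparametrization is that the exponent set $\{\gamma:\gamma\le\beta-e_j\}$ is downward closed, which is the form in which Lemma~\ref{basic} applies.

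The main input is a functional identity. As in the proof of Lemma~\ref{remark}, write $P=\prod_{b_{ik}>0}p_i^{b_{ik}}$ and $Q=\prod_{b_{ik}<0}p_i^{-b_{ik}}$, so that $g_k=P-Q=Q(\varphi_k-1)$ with $P(0)=Q(0)=1$, and recall from~\eqref{der} that $\partial_j\varphi_k=\varphi_{k,j}\varphi_k$. Differentiating $g_k=Q(\varphi_k-1)$ and writing $\varphi_k=(\varphi_k-1)+1$, a short computation gives
\[
\partial_j g_k=\psi_{k,j}\cdot g_k+Q\cdot\varphi_{k,j},\qquad
\psi_{k,j}:=\frac{\partial_j Q}{Q}+\varphi_{k,j}=\sum_{b_{ik}>0}\frac{b_{ik}d_{ij}}{p_i},
\]
where $\psi_{k,j}$ is a rational function regular at the origin. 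Since $g_k(0)=0$ and $Q(0)=1$, evaluating at $0$ already yields the base case $(\partial_j g_k)(0)=\varphi_{k,j}(0)$.

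Next I would argue that, in the range $\gamma\le\beta-e_j$, the ``error term'' $\psi_{k,j}g_k$ is invisible: expanding $\partial^{\gamma}(\psi_{k,j}g_k)(0)$ by the Leibniz rule produces a linear combination of values $(\partial^{\mu}g_k)(0)$ with $\mu\le\gamma$, and each of these vanishes — for $\mu=0$ because the origin is a common solution of the Gale system, and for $\mu\ne 0$ because $(\partial^{\mu}g_k)(0)$ is then among the derivatives forced to be zero (for instance when $\beta$ is a multiple of $e_j$, the case relevant to locating where $\mathcal D(g_k)$ meets the $y_j$-axis, the hypothesis $(\partial^{\alpha}g_k)(0)=0$ for $e_j\le\alpha\le\beta$ together with $g_k(0)=0$ already forces $(\partial^{\mu}g_k)(0)=0$ for all $\mu<\beta$). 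Hence $(\partial^{\gamma}\partial_j g_k)(0)=(\partial^{\gamma}(Q\varphi_{k,j}))(0)$ for every $\gamma\le\beta-e_j$, so the left-hand side of the displayed equivalence amounts to $(\partial^{\gamma}(Q\varphi_{k,j}))(0)=0$ for all $\gamma\le\beta-e_j$. Since $Q(0)=1\neq 0$ and $\{\gamma\le\beta-e_j\}$ is downward closed, Lemma~\ref{basic} applied to the product $Q\cdot\varphi_{k,j}$ turns this into $(\partial^{\gamma}\varphi_{k,j})(0)=0$ for all $\gamma\le\beta-e_j$, which is the right-hand side, and both implications of the lemma follow.

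The step I expect to be the main obstacle is the one just before Lemma~\ref{basic} is invoked: verifying that every derivative of $g_k$ that the Leibniz expansion of $\psi_{k,j}g_k$ brings in is indeed killed by the hypothesis. This is purely combinatorial but sensitive to the exact shape of the index range $\{e_j\le\alpha\le\beta\}$; depending on how the lemma is used, one either appeals directly to the vanishing of the lower-order derivatives of $g_k$ or interleaves an auxiliary induction establishing that vanishing as $\beta$ grows. Everything else is a mechanical use of the Leibniz rule together with the already proved Lemmas~\ref{basic} and~\ref{remark}.
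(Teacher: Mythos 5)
Your reduction, the identity $\partial_j g_k=\psi_{k,j}\,g_k+Q\,\varphi_{k,j}$ with $\psi_{k,j}=\partial_jP/P$, and the final appeal to Lemma~\ref{basic} for the product $Q\varphi_{k,j}$ are all correct, and you have put your finger on exactly the right spot: the step you flag as ``the main obstacle'' is a genuine gap, and it cannot be closed under the stated hypotheses. The Leibniz expansion of $\partial^{\gamma}(\psi_{k,j}g_k)(0)$ involves $(\partial^{\mu}g_k)(0)$ for \emph{all} $\mu\le\gamma\le\beta-e_j$, in particular for indices $\mu$ with $\mu_j=0$; these lie outside the range $\{e_j\le\alpha\le\beta\}$ controlled by the hypothesis, and they need not vanish. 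In fact the lemma as stated is false. Take $n=1$, $m=2$, $A=\{0,1,2,3\}$, the kernel vector $b=(1,-2,1,0)^t$ of $A$, and a reduced $D$ with $\delta_1=(1,0)$, $\delta_2=(2,1)$ (e.g.\ $\delta_3=(0,1)$, which forces $C=(2,-2,1,-1)$, i.e.\ $f=2-2x+x^2-x^3$ with the root $1$). Then $p_1=1+y_1$, $p_2=1+2y_1+y_2$, $g_k=p_2-p_1^{2}=y_2-y_1^{2}$ and $\varphi_{k,1}=-2/p_1+2/p_2$. With $j=1$ and $\beta=(1,1)$ the left-hand condition holds, since $\partial_1 g_k=-2y_1$ gives $(\partial^{(1,0)}g_k)(0)=(\partial^{(1,1)}g_k)(0)=0$; but $(\partial^{(0,1)}\varphi_{k,1})(0)=-2\neq0$ (equivalently $L^*_k(1,1)=2\neq 0$). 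The culprit is precisely your discarded term: here $(\partial^{(0,1)}(\psi_{k,1}g_k))(0)=\psi_{k,1}(0)\,(\partial_2 g_k)(0)=2\neq 0$ because $(\partial_2 g_k)(0)=1$ is a derivative not covered by the hypothesis. So no bookkeeping on the index set $\{e_j\le\alpha\le\beta\}$ will save the argument. (The paper's own one-line proof, which applies Lemma~\ref{basic} to $\partial_j\varphi_k=\varphi_k\varphi_{k,j}$, silently has the same problem: passing between $g_k$ and $\varphi_k$ also drags in derivatives outside the given range.)

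Both the statement and your proof become correct once the left-hand condition is strengthened to ``$(\partial^{\alpha}g_k)(0)=0$ for every nonzero $\alpha\le\beta$'' (or when $\beta$ is a multiple of $e_j$, in which case $\{0\}\cup\{e_j\le\alpha\le\beta\}$ is already downward closed). Under that hypothesis every $(\partial^{\mu}g_k)(0)$ occurring in the expansion of $\partial^{\gamma}(\psi_{k,j}g_k)(0)$ does vanish, your application of Lemma~\ref{basic} to $Q\varphi_{k,j}$ (with $Q(0)=1$) finishes the forward implication, and the converse follows by induction on $|\alpha|$ from the same identity, since the term $\partial^{\alpha-e_j}(\psi_{k,j}g_k)(0)$ only involves derivatives of $g_k$ of strictly smaller order. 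This strengthened form is exactly what is available, and what is needed, in the only place the lemma is invoked, namely the proof of Proposition~\ref{diagram}, which quantifies over all nonzero $\alpha\le\beta$. So the right conclusion of your analysis is not that the obstacle ``depends on how the lemma is used,'' but that the hypothesis of the lemma must be enlarged for it to be true; with that amendment your argument is complete and, unlike the paper's terse proof, makes the required vanishing of the extra derivatives explicit.
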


\begin{proof}
Note that $d_{ij}=(\partial_{j}p_{i})(0)$. The result follows then from the Leibniz rule using that
$(\partial_{j}\varphi_{k})(y)=\varphi_{k}(y) \cdot \varphi_{k,j}(y)$ and  $\varphi_{k}(0)=1$.
\end{proof}

We now introduce the ''dual'' version of the sums $L_k$ in Definition~\ref{def:LA}.

\begin{definition}\label{def:LB}
Given a Gale dual matrix $B$ of $A$ and a reduced Gale dual matrix $D$ of $C$ as in~\eqref{Dreduced}, define
\begin{equation}\label{L_{k}}
L^*_{{k}}(\alpha)= \sum_{i=0}^{n+m} b_{ik} \delta_{i}^{\alpha}.\end{equation}
\end{definition}

Thus,  $L^*_{{k}}(\alpha)= \sum_{i=1}^{n+m} b_{ik} d_{i1}^{\alpha_{1}}d_{i2}^{\alpha_{2}} \cdots d_{im}^{\alpha_{m}}$ if $\alpha=(\alpha_{1},\ldots,\alpha_{m})$.

\smallskip
 
 The following result is obvious noting that $p_{i}(0)=1$ for all $i=1,\ldots,n+m$.

\begin{lemma}\label{Lderiv}
Assume $D$ is reduced. For any $j=1,\ldots,m$
and any $\alpha \in {\ZZ}_{\geq 0}^{m}$ such that $e_{j} \leq \alpha$,
we have
\begin{equation}\label{Lder}
(\partial^{\alpha-e_{j}}\varphi_{k,j})(0)=(-1)^{|\alpha|-1} \cdot (|\alpha|-1)! \cdot L^*_{k}(\alpha),
\end{equation}
where $|\alpha|=\sum_{i=1}^{m} |\alpha_{i}|$.
\end{lemma}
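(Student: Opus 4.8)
The plan is to compute $(\partial^{\alpha-e_j}\varphi_{k,j})(0)$ directly from the explicit expansion~\eqref{der} of $\varphi_{k,j}$, and to compare it term by term with $L^*_k(\alpha)$ as given in Definition~\ref{def:LB}. First I would write $\varphi_{k,j}(y)=\sum_{i=1}^{n+m} b_{ik}\,d_{ij}\,p_i(y)^{-1}$ and recall that $p_i(y)=1+\langle\delta_i,y\rangle$ with $\delta_i=(d_{i1},\dots,d_{im})$, so that near the origin $p_i(y)^{-1}=\sum_{\nu\ge 0}(-1)^{|\nu|}\binom{|\nu|}{\nu}\delta_i^{\nu}\,y^{\nu}$, where $\binom{|\nu|}{\nu}$ denotes the multinomial coefficient $|\nu|!/(\nu_1!\cdots\nu_m!)$; this is just the multivariate geometric/Taylor expansion of $(1+\text{linear})^{-1}$.

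Next I would extract the coefficient of $y^{\,\alpha-e_j}$ from $\varphi_{k,j}$: it equals $\sum_{i=1}^{n+m} b_{ik}\,d_{ij}\,(-1)^{|\alpha|-1}\binom{|\alpha|-1}{\alpha-e_j}\,\delta_i^{\,\alpha-e_j}$. Since $d_{ij}\cdot\delta_i^{\,\alpha-e_j}=\delta_i^{\alpha}$ (multiplying the $\delta_i$-monomial of exponent $\alpha-e_j$ by the $j$-th coordinate $d_{ij}=\delta_{i,j}$ gives the monomial of exponent $\alpha$), this coefficient is $(-1)^{|\alpha|-1}\binom{|\alpha|-1}{\alpha-e_j}\sum_{i=1}^{n+m} b_{ik}\delta_i^{\alpha}=(-1)^{|\alpha|-1}\binom{|\alpha|-1}{\alpha-e_j}L^*_k(\alpha)$, using that $\delta_0=0$ contributes nothing so the sum over $i=0,\dots,n+m$ coincides with the sum over $i=1,\dots,n+m$. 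Finally, multiplying the Taylor coefficient by $(\alpha-e_j)!=(\alpha_1)!\cdots(\alpha_j-1)!\cdots(\alpha_m)!$ to convert it into the derivative $(\partial^{\alpha-e_j}\varphi_{k,j})(0)$, the multinomial coefficient $\binom{|\alpha|-1}{\alpha-e_j}=(|\alpha|-1)!/(\alpha-e_j)!$ times $(\alpha-e_j)!$ collapses to $(|\alpha|-1)!$, yielding exactly $(-1)^{|\alpha|-1}(|\alpha|-1)!\,L^*_k(\alpha)$, which is~\eqref{Lder}.

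I do not anticipate a real obstacle here; the statement is essentially a bookkeeping identity. The one point to handle carefully is the substitution $d_{ij}\,\delta_i^{\,\alpha-e_j}=\delta_i^{\alpha}$, which requires $e_j\le\alpha$ so that $\alpha-e_j$ is a legitimate (nonnegative) exponent — this is precisely the hypothesis in the lemma — and the observation that since $D$ is reduced we have $\delta_0=\mathbf 0$, so including or excluding $i=0$ in $L^*_k(\alpha)$ is immaterial for $|\alpha|\ge 1$ (and $e_j\le\alpha$ forces $|\alpha|\ge 1$). The rest is matching multinomial coefficients with factorials, which is routine.
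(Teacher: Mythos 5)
Your proof is correct, and it is essentially the computation the paper has in mind: the paper dismisses the lemma as "obvious noting that $p_i(0)=1$," and your direct expansion of $p_i(y)^{-1}$ as a multivariate geometric series, the identity $d_{ij}\,\delta_i^{\alpha-e_j}=\delta_i^{\alpha}$, and the cancellation of $(\alpha-e_j)!$ against the multinomial coefficient are exactly the bookkeeping that justifies this claim (including the correct observation that $\delta_0=\mathbf 0$ makes the $i=0$ term vanish since $\alpha\geq e_j$).
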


By combining the two previous lemmas, we obtain the following result.

\begin{proposition}\label{diagram}
Assume $D$ is reduced and let $\beta$ be any non zero vector  in ${\ZZ}_{\geq 0}^{m}$.
Then $(\partial^{\alpha}g_{k})(0)=0$ for any $\alpha \in  {\ZZ}_{\geq 0}^m$ such that $\alpha < \beta$ if and only if
$L^*_{k}(\alpha)=0$ for any $\alpha \in  {\ZZ}_{\geq 0}^m$ such that $\alpha < \beta$. In that case,
$(\partial^{\beta}g_{k})(0)=(-1)^{|\beta|-1} \cdot (|\beta|-1)! \cdot L^*_{{k}}(\beta)$. In particular, a point $\beta \in {\ZZ}_{\geq 0}^m$ is a vertex of the Newton diagram
of $g_{k}$ if and only if $L^*_{k}(\beta) \neq 0$ and $L^*_{k}(\alpha)= 0$ for any $\alpha \in  {\ZZ}_{\geq 0}^m$ such that $\alpha < \beta$. Moreover, for any point $\beta \in {\ZZ}_{\geq 0}^m$
lying on the Newton diagram of $g_{k}$ the coefficient of $y^{\beta}$ in $g_{k}$ equals
$(-1)^{|\beta|-1} (|\beta|-1)! \frac{L^*_{k}(\beta)}{\beta_1!\dots\beta_m!}$.
\end{proposition}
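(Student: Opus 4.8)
The plan is to prove Proposition~\ref{diagram} by assembling the lemmas that immediately precede it, reducing the statement about the polynomials $g_k$ to a statement about the rational functions $\varphi_k$, then about the logarithmic derivatives $\varphi_{k,j}$, and finally to the explicit linear-algebraic quantities $L^*_k(\alpha)$. The key observation is that all three groups of lemmas (Lemma~\ref{remark}, Lemma~\ref{useful}, Lemma~\ref{Lderiv}) are precisely the translation steps in this chain, so the proof is mostly a careful bookkeeping argument about the partial order on $\ZZ_{\geq 0}^m$.

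First I would prove the equivalence ``$(\partial^\alpha g_k)(0)=0$ for all $\alpha<\beta$'' $\iff$ ``$L^*_k(\alpha)=0$ for all $\alpha<\beta$.'' For the forward direction, assume $(\partial^\alpha g_k)(0)=0$ for all $\alpha<\beta$. By Lemma~\ref{remark} applied with $\beta$ replaced by each nonzero $\alpha < \beta$ (note every nonzero $\alpha\le\gamma$ for some $\gamma<\beta$ is itself $<\beta$), we get $(\partial^\alpha\varphi_k)(0)=0$ for all nonzero $\alpha<\beta$. Now fix any $\alpha<\beta$ with $\alpha\neq 0$, pick an index $j$ with $e_j\le\alpha$, and apply Lemma~\ref{useful}: since $(\partial^{\alpha'}g_k)(0)=0$ for all $\alpha'$ with $e_j\le\alpha'\le\alpha$ (these all satisfy $\alpha'<\beta$), we obtain $(\partial^{\alpha-e_j}\varphi_{k,j})(0)=0$. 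By Lemma~\ref{Lderiv} this is $(-1)^{|\alpha|-1}(|\alpha|-1)!\,L^*_k(\alpha)$, so $L^*_k(\alpha)=0$. Finally $L^*_k(0)=\sum_i b_{ik}=0$ always, because the columns of $B$ lie in $\ker(A)$ and the first row of $A$ is all ones; hence $L^*_k(\alpha)=0$ for \emph{all} $\alpha<\beta$, including $\alpha=0$. The reverse direction runs the same three lemmas in the opposite order, using that each is stated as an ``if and only if.''

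Next I would establish the formula $(\partial^\beta g_k)(0)=(-1)^{|\beta|-1}(|\beta|-1)!\,L^*_k(\beta)$ under the hypothesis that $(\partial^\alpha g_k)(0)=0$ for all $\alpha<\beta$. If $\beta=0$ both sides are $1\cdot 0$ — wait, $g_k(0)=0$ and $L^*_k(0)=0$, consistent; but the interesting case is $\beta\neq 0$. Pick $j$ with $e_j\le\beta$. By the ``moreover'' part of Lemma~\ref{basic} (as used in Lemma~\ref{remark}, with $Q(0)=1$), vanishing of all lower derivatives of $g_k$ gives $(\partial^\beta g_k)(0)=(\partial^\beta\varphi_k)(0)$; then by the ``moreover'' in Lemma~\ref{useful}/Lemma~\ref{basic} applied to $\partial_j\varphi_k=\varphi_k\cdot\varphi_{k,j}$, one gets $(\partial^\beta\varphi_k)(0)=(\partial^{\beta-e_j}\varphi_{k,j})(0)$; and Lemma~\ref{Lderiv} evaluates the latter as $(-1)^{|\beta|-1}(|\beta|-1)!\,L^*_k(\beta)$. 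The characterization of vertices of the Newton diagram of $g_k$ then follows from the description of $\mathcal D(g_k)$ in Section~\ref{sec:tools} (the paragraph after Proposition~\ref{decreasing property}, applied to $g_k$): $\beta$ is a vertex iff $(\partial^\alpha g_k)(0)=0$ for all $\alpha<\beta$ and $(\partial^\beta g_k)(0)\neq 0$, which by the above is equivalent to $L^*_k(\alpha)=0$ for $\alpha<\beta$ and $L^*_k(\beta)\neq 0$. For the last sentence, a point $\beta$ lying on $\mathcal D(g_k)$ satisfies $(\partial^\alpha g_k)(0)=0$ for $\alpha<\beta$, so the Taylor coefficient of $y^\beta$ in $g_k$ is $\frac{(\partial^\beta g_k)(0)}{\beta_1!\cdots\beta_m!}=(-1)^{|\beta|-1}(|\beta|-1)!\frac{L^*_k(\beta)}{\beta_1!\cdots\beta_m!}$.

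The only mildly delicate point — the one I'd flag as the main thing to get right rather than a genuine obstacle — is the combinatorics of the reduction: Lemmas~\ref{remark}, \ref{useful}, \ref{Lderiv} are each phrased ``for all $\alpha$ in a range determined by $\beta$,'' and one must check that quantifying over all $\alpha<\beta$ and, for each such $\alpha$, choosing a coordinate $j$ with $e_j\le\alpha$, exactly covers the hypotheses needed, with no gap at $\alpha=0$ (handled separately by $L^*_k(0)=\sum_i b_{ik}=0$) and with the strict/non-strict inequalities $<$ versus $\le$ lined up correctly when passing between $g_k$, $\varphi_k$, and $\varphi_{k,j}$. Everything else is a direct citation of the preceding lemmas.
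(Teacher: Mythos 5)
Your proposal is correct and follows essentially the same route as the paper's proof: chaining Lemmas~\ref{remark}, \ref{useful}, \ref{Lderiv} (and the ``moreover'' part of Lemma~\ref{basic}) to pass from $g_k$ to $\varphi_{k,j}$ to $L^*_k$, then reading off the vertex characterization and the coefficient formula from the description of Newton diagrams in Section~\ref{sec:tools}. The only differences are cosmetic: your invocation of Lemma~\ref{remark} is redundant (Lemma~\ref{useful} already goes directly from $g_k$ to $\varphi_{k,j}$), and you make explicit the trivially satisfied case $\alpha=0$ and the last two assertions, which the paper leaves implicit.
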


\begin{proof}
We first prove that $(\partial^{\alpha}g_{k})(0)=0$ for any non-zero $\alpha \in \Z_{\geq 0}^m$ such that $\alpha \leq \beta$ if and only if
 $L^*_{k}(\alpha)=0$ for any non-zero $\alpha \in \Z_{\geq 0}^m$ such that $\alpha \leq \beta$.
Let $j \in \{1,\ldots,m\}$ such that $e_{j} \leq \beta$. 
Applying Lemma~\ref{useful} and Lemma~\ref{Lderiv} we get $(\partial^{\alpha}g_{k})(0)=0$ for any $\alpha \in  {\ZZ}_{\geq 0}^m$ such that $e_{j } \leq \alpha \leq \beta$ if and only if
$L^*_{k}(\alpha)=0$ for any $\alpha \in  {\ZZ}_{\geq 0}^m$ such that $e_{j } \leq \alpha \leq \beta$. Collecting this for all $j$ such that 
$e_{j} \leq \beta$ leads to the result. From the previous computation, Lemma~\ref{lem:tor} and using that $g_{k}=P-Q=Q \cdot (\varphi_{k}-1)$
we deduce that if $L^*_{k}(\alpha)=0$ for any non-zero $\alpha \in \Z_{\geq 0}^m$ such that $\alpha <\beta$, 
then $(\partial^{\beta}g_{k})(0)=(-1)^{|\beta|-1} \cdot (|\beta|-1)! \cdot L^*_{k}(\beta)$.
\end{proof}

\section{$H$-duality} \label{sec:H}
\label{S:H}

In this section we introduce the notion of $H$-duality of a sparse polynomial system and we relate it to the notion of Gale duality that we recalled in the previous section.
We prove in Theorem~\ref{th:dual} the equality of the Newton diagrams at $\mb 0 \in \C^m$. In Subsection~\ref{ssec:resume} we highlight the main relations.

By Theorem~\ref{Gale duality for one solution}, the systems $C \cdot (x+ \mb 1)^A=0$ and the Gale dual system~\eqref{Gale system} have the same multiplicity at the origin
when $\mb 1$ is an isolated solution of  $C \cdot x^A=0$.
Given a complex number $d$, we consider the analytic function $x^d: \C \setminus \R_{\le 0} \to \C$ defined by 
\[ x^d = e^{d \,  {\rm log}(x)},\]
where ${\rm log}$ is the branch of the logarithm defined in $ \C \setminus \R_{\le 0}$ that taked the value $0$ at $x=1$.  
The function $(x+ \mb 1)^d$ is analytic around $0$, where it takes the value $1$.
When $d \in \Z$, this is just the Laurent polynomial $(x+ \mb 1)^d$.  Given ${\bf d} \in \C^n$, we can similarly define the analytic
function $x^{\bf d}: (\C \setminus \R_{\le 0})^n \to \C$ by $x^{\bf d}= \prod_{i=1}^n x_i^{d_i}$ and substitute $x = x+ \mb 1$
coordinatewise.

Given a Gale dual matrix $B$ of $A$ and a reduced Gale dual matrix $D$ of $C$ as in~\eqref{Dreduced}, we define the system of analytic functions:
\begin{equation}
\label{eq:his}
h_k(y) \, = \sum_{i=0}^{m+n} b_{ik} y^{\delta_i} \, = \, 0, \, k = 1, \dots, m.  
\end{equation}
We will abbreviate system~\eqref{eq:his} as $B^t \cdot y^{D^t}=0$.  Note that this is a system in codimension many variables.
We denote
\begin{equation} \label{eq:H}
H_k(y)= h_k(y+ {\mb 1}) \, = \, 0, \, k=1, \dots, m.
\end{equation}
We will say that the system $B^t \cdot (y+\mb1)^{D^t}=0$ given by~\ref{eq:H} \emph{is $H$-dual }to the system $C \cdot (x+\mb1)^A=0$. Note that $C \cdot (x+\mb1)^A=0$ is $H$-dual to
$B^t \cdot (y+\mb1)^{D^t}=0$. 

\medskip

The next result follows from the observation that the quantity $L^*_{k}$ defined in~\eqref{L_{k}}
coincides with the quantity~\eqref{L_{f}} associated to $h_{k}$ (instead of $f_{k}$).

\begin{theorem}
\label{th:dual} 
Assume that $\mb 1 \in \C^n$ is an isolated solution of the system $C \cdot x^A=0$. Let $B$ be a $\Z$-Gale dual matrix of $A$ and $D$ be a reduced Gale dual matrix of $C$.
Then the polynomials $g_{1}(y),\ldots,g_{m}(y)$ defining the Gale dual system and the polynomials $H_1, \dots, H_m$ defining the $H$-dual system
\begin{equation}\label{eq:BD}
B^{t} \cdot (y+\mb 1)^{D^t}=0
\end{equation}  
 have respectively the same Newton diagrams. 
Moreover, for any $\beta  \in {\ZZ}_{\geq 0}^m$ which lies on $\mathcal D(H_k)$, the coefficient of $y^\beta$ in $H_k$ equals $\frac{L^*_{k}(\beta)}{\beta_1!\dots\beta_m!}$ while the
coefficient of $y^\beta$ in $g_k$ equals $(-1)^{|\beta|-1} (|\beta|-1)! \frac{L^*_{k}(\beta)}{\beta_1!\dots\beta_m!}$.

\smallskip

If the Gale dual system and the system~\eqref{eq:BD} are convenient and are non-degenerate at $\mb 0$, then all three systems
$C \cdot (x+{\mb 1})^A=0$, $g_{1}(y)=\cdots=g_{m}(y)=0$ and $B^{t} \cdot (y+{\mb 1})^{D^t}=0$ have the same multiplicity at the origin.
 \end{theorem}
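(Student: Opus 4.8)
The plan is to prove the two assertions in order. For the first assertion, I would start from Proposition~\ref{diagram}, which already tells us that a point $\beta \in \ZZ_{\geq 0}^m$ is a vertex of $\mathcal D(g_k)$ (and more generally lies on $\mathcal D(g_k)$) exactly in terms of the vanishing/non-vanishing of the quantities $L^*_k(\alpha)$ for $\alpha < \beta$ and $\alpha = \beta$. So the key observation to record is the one flagged before the statement: the quantity $L^*_k$ from~\eqref{L_{k}} is literally the quantity $L_k$ from Definition~\ref{def:LA} (see~\eqref{L_{f}}), but with the role of $C$ played by $B^t$ and the role of the support matrix $A$ played by $D^t$ (whose rows are the $\delta_i$). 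Concretely, $h_k(y) = \sum_{i=0}^{m+n} b_{ik} y^{\delta_i}$ is a ``polynomial'' (analytic function, since the $\delta_i$ need not be integral) whose ``support'' is $\{\delta_0, \dots, \delta_{m+n}\}$ and whose coefficient vector is the $k$-th column of $B$; applying the recipe of~\eqref{L_{f}} to $h_k$ produces exactly $L^*_k(\alpha) = \sum_i b_{ik}\delta_i^\alpha$. Since $H_k(y) = h_k(y+\mb 1)$, Lemma~\ref{lem:tor} (valid for power series by the remark following Theorem~\ref{main co-mixed}, once we expand $(y+\mb 1)^{\delta_i}$ as a power series in $y$ via the binomial series) applies verbatim: $\beta$ lies on $\mathcal D(H_k)$ iff $L^*_k(\beta) \neq 0$ and $L^*_k(\alpha) = 0$ for all $\alpha < \beta$, and then the coefficient of $y^\beta$ in $H_k$ is $\frac{L^*_k(\beta)}{\beta_1!\cdots\beta_m!}$. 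Comparing this characterization with Proposition~\ref{diagram} gives immediately that $\mathcal D(g_k) = \mathcal D(H_k)$ for each $k$, and reading off the coefficient formula from each source yields the stated ratio $(-1)^{|\beta|-1}(|\beta|-1)!$ between the coefficient of $y^\beta$ in $g_k$ and in $H_k$ for $\beta$ on the common diagram.

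For the second assertion, assume in addition that the Gale system $g_1 = \cdots = g_m = 0$ and the $H$-dual system~\eqref{eq:BD} are convenient and non-degenerate at $\mb 0$. By Theorem~\ref{Gale duality for one solution}, the system $C\cdot(x+\mb 1)^A = 0$ and the Gale system have the same multiplicity at the origin, so it remains to show that the Gale system and the $H$-dual system have the same multiplicity at $\mb 0$. For this I would invoke Theorem~\ref{main co-mixed}: under non-degeneracy, $\mathrm{mult}_0(g_1,\dots,g_m) = \Vol^\circ(\mathcal D(g_1),\dots,\mathcal D(g_m))$ and $\mathrm{mult}_0(H_1,\dots,H_m) = \Vol^\circ(\mathcal D(H_1),\dots,\mathcal D(H_m))$ — here I use that Theorem~\ref{main co-mixed} holds for power series, which is exactly what the $H_k$ are in general. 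Since the first assertion gives $\mathcal D(g_k) = \mathcal D(H_k)$ for every $k$, the two mixed covolumes coincide, and hence so do the two multiplicities. Chaining the equalities through Theorem~\ref{Gale duality for one solution} gives that all three systems have the same multiplicity at $\mb 0$.

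The one point that needs a little care — and which I expect to be the main (mild) obstacle — is the passage to power series when the $\delta_i$ are not integer vectors: $H_k(y) = \sum_i b_{ik}(y+\mb 1)^{\delta_i}$ is only an analytic germ at $\mb 0$, not a Laurent polynomial, so one must make sure that (i) the binomial expansion $(y+\mb 1)^{\delta_i} = \sum_{\alpha} \binom{\delta_i}{\alpha} y^\alpha$ is legitimate and convergent near $\mb 0$, so that the coefficient of $y^\alpha$ in $H_k$ really is $\frac{1}{\alpha_1!\cdots\alpha_m!}\sum_i b_{ik}\delta_i^\alpha = \frac{L^*_k(\alpha)}{\alpha_1!\cdots\alpha_m!}$ (here $\binom{\delta}{\alpha} = \prod_j \binom{\delta_j}{\alpha_j}$ and $\alpha_j!\binom{\delta_j}{\alpha_j}$ is a polynomial in $\delta_j$ with leading term $\delta_j^{\alpha_j}$ — one should note that the \emph{lower-order} terms also contribute, but they are governed by the $L^*_k(\alpha')$ with $\alpha' < \alpha$, which vanish precisely when $\beta$ is on the diagram, so the leading term is what survives; this is the content already encoded in Lemma~\ref{basic} and Lemma~\ref{lem:theta}), and (ii) the version of Theorem~\ref{main co-mixed} and of the Newton-diagram formalism for power series is the one being used. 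Both are explicitly granted in the excerpt (the paragraph after Theorem~\ref{main co-mixed}, and the power-series version of Lemma~\ref{lem:theta}/\ref{lem:tor}), so once these are cited the argument is just the bookkeeping described above.
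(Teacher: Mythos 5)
Your proposal is correct and follows essentially the same route as the paper: the key observation that $L^*_k$ is the quantity $L_k$ of Definition~\ref{def:LA} computed for $h_k$ (equivalently, the toric derivative $\theta^\alpha h_k$ evaluated at $\mb 1$), combined with Lemma~\ref{lem:theta}/\ref{lem:tor} for $H_k$ and Proposition~\ref{diagram} for $g_k$, and then Theorem~\ref{main co-mixed} plus Theorem~\ref{Gale duality for one solution} for the equality of multiplicities. Your explicit caveat about the survival of only the leading term in the binomial expansion (because the lower-order contributions are governed by the vanishing $L^*_k(\alpha')$, $\alpha'<\beta$) is exactly the mechanism the paper delegates to Lemma~\ref{lem:theta}.
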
 
 
\begin{proof}
Note that $0 \in \C^m$ is a solution of the system $B^{t} \cdot (y+\mb 1)^{D^t}=0$ since the sum of the rows of $B$ is zero.
The same is true for the Gale dual system $g_{1}(y)=\cdots=g_{m}(y)=0$ due to Theorem~\ref{Gale duality for one solution} and the fact that  $\mb 1$ is a solution of $C \cdot x^A=0$.
The $k$-th polynomial of the system $B^{t} \cdot x^{D^t}=0$ is the polynomial $h_k(x)$ defined in~\eqref{eq:his}. Observe that $L^*_{k}(\alpha)$
defined in~\eqref{L_{k}} is equal to the toric derivative $\theta^{\alpha}h_k$ evaluated at the point ${\mb 1} \in \C^m$, where as before
$\theta^{\alpha}=\prod_{i=1}^m \theta_i^{\alpha_i}$ and $\theta_i=x_{i} \frac{\partial}{dx_{i}}$. 
Using Lemma~\ref{lem:tor}, we get that $\beta \in \Z_{\geq 0}^m$ is a vertex of the Newton diagram of $H_{k}$ if and only if $L^*_{k}(\beta) \neq 0$ and $L^*_{k}(\alpha)$ for any  non-zero $\alpha \in \Z_{\geq 0}^m$
such that $\alpha < \beta$. Moreover, for any $\beta  \in {\ZZ}_{\geq 0}^m$ which lies on $\mathcal D(H_k)$, the coefficient of $y^\beta$ in $H_k$ equals $\frac{L^*_{k}(\beta)}{\beta_1!\dots\beta_m!}$.
Proposition~\ref{diagram} gives the analogous results for $g_{k}$.

Finally, if the Gale dual system and the system~\eqref{eq:BD} are convenient and non-degenerate at $\mb 0$, 
then they have the same multiplicity at the origin which equals the mixed covolume of their common Newton diagrams by 
Theorem~\ref{main co-mixed}. Moreover, the systems $C \cdot (x+{\mb 1})^A=0$ and $g_{1}(y)=\cdots=g_{m}(y)=0$ have the same multiplicity at the origin by
Theorem~\ref{Gale duality for one solution}.
\end{proof}

\begin{remark} The final assertion of Theorem~\ref{th:dual} is a local statement. 
Assuming $C$ has rational coefficients and $D$ has integer coefficients, one might ask if the systems $C \cdot (x+\mb 1)^A=0$ and $B^{t} \cdot (y+\mb 1)^{D^t}=0$ in 
Theorem~\ref{th:dual} have the same number of non-zero solutions.  This is not the case.
A simple example can be found already in the case $n=m=1$. Take $\A=\{0,1,3\}$ and  $B=(2, -3,1)^{t}$. 
Take $C=(1, -2,1)$ so that the matrix $D$ with first column $(1,1,1)^{t}$ and second column $(0,1,2)^{t}$ is Gale dual to $C$. 
Then the system $C \cdot x^A=0$  equals the degree $3$ polynomial $1-2x+x^{3}=0$, which has three simple (real) roots $1$ and $\frac{-1\pm \sqrt{5}}{2}$.
On the other hand the system $B^{t} \cdot y^{D^t}=0$ equals the degree $2$ polynomial $2-3x+x^2 =(x-1)(x-2)=0$, with two (real) roots.
\end{remark}

\smallskip

\subsection{Viewing all the systems together} \label{ssec:resume}

Start with a system $C \cdot (x+\mb 1)^A=0$ of dimension $n$ and codimension $m$ and having $\mb 0$ as an isolated solution 
of multiplicity $\mu \geq 1$. Assume that a Gale dual matrix $B$ for $A$ and a reduced Gale dual matrix $D$ for $C$ are given.
We denote the associated Gale system $g_{1}(y)=\cdots=g_{m}(y)=0$ by the compact form $(D \cdot y) ^{B}=1$.
 This is a system in $m$ variables and it has $\mb 0$ as a solution with the same multiplicity $\mu$.
The $H$-dual system $B^{t} \cdot (y+\mb 1)^{D^t}=0$ has the same Newton diagrams than the Gale system. Its number of variables (dimension) is $m$ while its codimension is $n$.
It has $\mb 0$ as a solution of multiplicity $\mu' \geq 1$. If the Gale system $(D \cdot y) ^{B}=1$ is convenient and non-degenerate, then $\mu' \geq \mu$. On the other hand,
if $B^{t} \cdot (y+\mb 1)^{D^t}=0$ is convenient and non-degenerate, then $\mu \geq \mu'$. 
One can also consider a polynomial system Gale dual to $B^{t} \cdot (y+\mb 1)^{D^t}=0$. Assume now that $a_{0}=(0,\ldots,0)$. 
Then the matrix $A^{t}$ is a reduced Gale dual matrix of $B^{t}$. Moreover $C^{t}$ is Gale dual to $D^{t}$.
 Thus a polynomial system Gale dual to $B^{t} \cdot (y+\mb 1)^{D^t}=0$ is $(A^t \cdot x)^{C^{t}}=1$. 
 So again, if $(A^t \cdot x)^{C^{t}}=1$ is convenient and non-degenerate then $\mu \geq \mu'$ while if $C \cdot (x+\mb 1)^A=0$ 
 is convenient and non-degenerate then $\mu' \geq \mu$. We resume this in the following diagram
\bigskip

\begin{equation}\label{eq:resume}
\begin{array}{cccl}
C \cdot (x+\mb 1)^A=0 & \stackrel{\tiny \mbox{Gale map}}\longrightarrow  &(D \cdot y) ^{B}=1 & \quad \mbox{mult. $\mu$}\\
\Big\updownarrow & & \Big\updownarrow & \\
(A^t \cdot x)^{C^{t}}=1 &  \stackrel{\tiny \mbox{Gale map}}\longleftarrow  & B^{t} \cdot (y+\mb 1)^{D^t}=0 & \quad \mbox{mult. $\mu'$}\
\end{array}
\end{equation}
\bigskip

The horizontal arrows represent Gale duality map for polynomial systems.
The vertical double arrows indicate polynomial systems which have the same Newton diagrams, and thus the same number of variables. Systems on the first column have $n$ variables
while those on the second column have $m$ variables.
The multiplicity at $\mb 0$ of the systems on the first row equals $\mu$, while the multiplicity at $\mb 0$ of the systems on the second row equals $\mu'$.
If at least one system on the first row is convenient and non-degenerate at $\mb 0$, then $\mu \leq \mu'$.
If at least one system on the second row is convenient and non-degenerate at $\mb 0$,
then $\mu \geq \mu'$.

\medskip

Recall that two matrices are said left (resp., right) equivalent if one matrix is obtained from the other by left (resp., right) multiplication by an invertible matrix.
If $\tilde{C}$ and $\tilde{A}$ are left equivalent to $C$ and $A$, respectively then  $C \cdot (x+\mb 1)^{A}=0$ and $\tilde{C} \cdot (x+\mb 1)^{\tilde{A}}=0$ have the same multiplicity at $\mb 0$, but not necessarily the same Newton diagrams. On the other side, if $\tilde{B}$ and $\tilde{D}$ are right equivalent to $B$ and $D$, respectively, with both $D$ and $\tilde{D}$ reduced, then
$(D \cdot y) ^{B}=1$ and $(\tilde{D} \cdot y) ^{\tilde{B}}=1$ have the same multiplicity at $\mb 0$, but not necessarily the same Newton diagrams.
Similar statements hold true for the systems on the second row of~\eqref{eq:resume}.
Note that taking transpose of matrices transforms left equivalence to right equivalence (and vice versa). Note also that left and right equivalences behave well with respect to Gale duality for matrices in the sense that if $\tilde{M}$ is left equivalent to $M$ and $\tilde{N}$ is right equivalent to $N$ then $\tilde N$ is a Gale dual matrix for $\tilde M$ if and only if $N$ is a Gale dual matrix for $M$. Therefore,~\eqref{eq:resume} holds true if if we replace $A,C$ by left equivalent matrices and $B,D$ by right equivalent matrices (still imposing that $A^{t}$ and $D$ are reduced). Under the assumption that $\mb 1$ is an isolated solution and the matrices have full rank, we prove in Proposition~\ref{prop:conv}, Proposition~\ref{prop:convenient}, Proposition~\ref{prop:conv*}, and Proposition~\ref{prop:convenient*} that there always exist equivalent matrices so that all polynomial systems in~\eqref{eq:resume} are convenient. This lead us to pose Question~\ref{q:ndc} in the Introduction.
When $n=1$ or $m=1$, Question~\ref{q:ndc} has a positive answer because the Newton diagram of any non-zero univariate polynomial vanishing at zero is convenient and non-degenerate at zero.
A positive answer to Question~\ref{q:ndc} in general would imply that we always have $\mu=\mu'$ in~\eqref{eq:resume} and would imply a positive answer to Question~\ref{q:Mnm}.

\begin{example}
\label{firstE}
Consider the system $(x-1)^3=0$ in dimension $n=1$ with codimension $m=2$.  Clearly $1$ is an isolated zero with multiplicity $3$ and any non-zero univariate polynomial is convenient and non-degenerate.  We will use this simple case to explore in this example and the following one the different systems in~\eqref{eq:resume}.

Setting $a_i=i$, $i=0\ldots, 3$, we get the matrices
$$A=
\left(
\begin{array}{cccc}
1 & 1 & 1 & 1 \\
0 & 1 & 2 & 3
\end{array}
\right),
\quad 
C=\left(
\begin{array}{cccc}
-1 & 3 & -3 & 1
\end{array}
\right).$$
The matrix $B$ below is a Gale dual matrix of $A$ and the matrix $D$ is  a reduced  Gale dual matrix of $C$:
$$
B=
\left(
\begin{array}{rr}
1 & 2 \\
-2 & -3 \\
1  & 0 \\
0   & 1
 \end{array}
\right), 
\quad
D=
\left(
\begin{array}{rrr}
1 & 0 & 0\\
1 & 1 & 0\\
1  & 0 & 1\\
1   & -3 & 3 \\
 \end{array}
\right).
$$
Any other Gale dual matrix of $A$ is obtained by right multiplication of $B$ with an integer matrix with non-zero determinant.
Similarly, any other reduced Gale dual matrix of $C$ is obtained by right multiplication of $D$ with a invertible matrix with real coefficients
and first row and column vectors both equal to $(1,0,0)$. Under our choice of $B$ and $D$, we have $p_0(y)=1$, $p_{1}(y)=1+y_{1}$, $p_2(y)=1+y_{2}$ and $p_{3}(y)=1-3y_{1}+3y_{2}$.
The associated Gale system is $g_{1}(y)=g_{2}(y)=0$ with $g_{1}(y)=p_{2}(y)-p_1^{2}(y)=y_{2}-2y_{1}-y_{1}^{2}$ and $g_{2}(y)=p_{3}(y)-p_1^{3}(y)=3y_{2}-6y_{1}-3y_{1}^{2}-y_{1}^{3}$. Thus $g_{1}$ and $g_{2}$ have the same Newton diagram, given by the segment with vertices $(1,0)$ and $(0,1)$, and
the corresponding truncated polynomials are $y_{2}-2y_{1}$ and $3y_{2}-6y_{1}$ (these truncated polynomials are the polynomials $g_{1}^{\epsilon}$ and $g_{2}^{\epsilon}$ for $\epsilon=(1,1)$).
So the Gale system is convenient but it is degenerate at the origin since $y_{2}-2y_{1}=3y_{2}-6y_{1}=0$ has a solution in $(\C^*)^2$.
In fact, since the Gale system is convenient it should be degenerate at the origin for otherwise the multiplicity 
of $0$ would be equal to the corresponding mixed covolume (Theorem~\ref{main co-mixed}), which equals $1$
 in that case, but we know that the multiplicity at the origin of the Gale system is equal to the multiplicity at $\bf 1$ 
 of the initial system, which equals $3$ by Theorem~\ref{Gale duality for one solution}. Let us choose another reduced Gale dual matrix for $C$. Multiplying on the right the starting matrix $D$ with an invertible matrix
$$\left(
\begin{array}{ccc}
1& 0 & 0 \\
0 & a & b \\
0 & c & d
\end{array}
\right)$$
gives the reduced Gale dual matrix of $C$
$$
\left(
\begin{array}{ccc}
1 & 0 & 0\\
1 & a & b\\
1  & c & d\\
1   & 3(c-a) & 3(d-b) \\
 \end{array}
\right),
$$
again denoted by $D$. Then, $p_0(y)=1$, $p_{1}(y)=1+ay_{1}+by_{2}$, $p_2(y)=1+cy_1+dy_{2}$ and $p_{3}(y)=1+3(c-a)y_{1}+3(d-b)y_{2}$,
which gives
$$g_{1}(y)=(c-2a)y_{1}+(d-2b)y_{2}-(ay_{1}+by_{2})^{2}$$
and
$$g_2(y)=3(c-2a)y_{1}+3(d-2b)y_{2}-3(ay_{1}+by_{2})^{2}-(ay_{1}+by_{2})^{3}.$$
If $c \neq 2a$ and $d \neq 2b$, then the Newton diagrams are as before both equal to the segment with vertices $(1,0)$ and $(0,1)$ and we conclude again that the system is convenient
and degenerate at the origin. Assume from now on that $c=2a$. Then $d \neq 2b$ and $a \neq 0$ since $ad-bc \neq 0$. 
Thus the Newton diagrams of $g_{1}$ and $g_{2}$ are both equal to the segment with vertices $(2,0)$ and $(0,1)$.
The corresponding truncated system is $(d-2b)y_{2}-a^{2}y_{1}^2=3(d-2b)y_{2}-3a^{2}y_{1}^2=0$ and so again
 the system $g_{1}=g_{2}=0$ is convenient and degenerate at the origin. In fact this was expected, since otherwise the 
 multiplicity at the origin would have been equal to the corresponding mixed covolume which equals $2$ and not $3$.
The system $g_{1}=g_{2}-3g_{1}=0$ is equivalent to the Gale system, moreover it is non-degenerate at the origin.
 It is easy to compute, using Formula~\ref{alternating sum}, that the corresponding mixed covolume is equal to $3$ as expected. Note however that $g_{1}=g_{2}-3g_{1}=0$ is not a Gale dual system of $(x-1)^3=0$.
Using Theorem~\ref{th:dual}, we get that the Gale system $g_{1}=g_{2}=0$ and the system
$B^{t} \cdot (y+1)^{D^t}=0$ have the same Newton diagrams (the segment with vertices $(2,0)$ and $(0,1)$ for both equations). Moreover,
the truncated system corresponding to
$B^{t} \cdot (y+1)^{D^t}=0$ is $(d-2b)y_{2}+a^{2}y_{1}^2=3(d-2b)y_{2}+3a^{2}y_{1}^2=0$.
So by substracting three times the first column of $B$ from the second column of $B$, we will cancel the coefficients
 in front of $y_{2}$ and $y_{1}^2$ in the second equation of $B^{t} \cdot (y+1)^{D^t}=0$ which will raise the Newton diagram of the second equation higher, for this system and for the Gale system as well.
Explicitely, keeping the matrix $D$ as before with $c=2a$ and taking as for $B$ the matrix
$$
B=
\left(
\begin{array}{rr}
1 & -1 \\
-2 & 3 \\
1  & -3 \\
0   & 1
 \end{array}
\right)$$
we get a Gale system where the Newton diagram of $g_{1}$ and the corresponding truncated polynomial is unchanged. 
We compute that the Newton diagram of $g_{2}$ is the union of the segments $E_{1}=[(0,2), (1,1)]$ and $E_{2}=[(1,1), (3,0)]$, 
assuming that $6b^{2}+3d^{2}-9bd \neq 0$. In fact the truncation of $g_{2}$ to $E_{1}$ is the polynomial
$-(6b^{2}+3d^{2}-9bd)y_{2}^{2}+3a(2b-d)y_{1}y_{2}$ while the truncation of $g_{2}$ to $E_{2}$ is the polynomial 
$3a(2b-d)y_{1}y_{2}+2a^{3}y_{1}^{3}$. We conclude that this Gale system is convenient and non-degenerate at the origin. 
Thus the multiplicity at the origin is equal to the corresponding mixed covolume
(Theorem~\ref{main co-mixed}). Using Formula~\ref{alternating sum}, we compute that this mixed covolume equals $3$, as expected.
\end{example}

\begin{example}
\label{firstEbis}
We use $H$-dual systems $B^{t} \cdot (y+\mb 1)^{D^t}=0$ coming from Example \ref{firstE} as starting systems $C \cdot (x+1)^A=0$ .
Consider the system $C \cdot (x+1)^A=0$ where $C$ is the transpose of 
the matrix $B$ considered at the beginning of Example \ref{firstE},  and $A$ is the transpose of the matrix $D$ considered at the beginning of Example \ref{firstE}:
$$
C=\left(
\begin{array}{rrrr}
1& -2 & 1 & 0 \\
2 & -3 & 0 & 1
\end{array}
\right),
\quad
A=\left(
\begin{array}{rrrr}
1 & 1 & 1 & 1 \\
0 & 1 & 0 & -3 \\
0 & 0 & 1 & 3
 \end{array}
\right).$$
Using the computations made in Example  \ref{firstE} and Theorem \ref{th:dual}, we get that this system is convenient but degenerate at the origin.
Now consider the following left equivalent matrices to $C$ and $A$ respectively:
$$
\tilde{C}=\left(
\begin{array}{rrrr}
1& -2 & 1 & 0 \\
-1& 3 & -3 & 1
\end{array}
\right), \quad 
\tilde{A}=\left(
\begin{array}{rrrr}
1 & 1 & 1 & 1 \\
0 & a & c & 3(c-a) \\
0 & b & d & 3(d-b)
 \end{array}
\right).$$
(transpose of the final matrices $B$ and $D$ in Example  \ref{firstE}).
Then using the computations made in Example \ref{firstE} and Theorem \ref{th:dual}, we get that the system $\tilde{C} \cdot (x+1)^{\tilde{A}}=0$ is convenient and non-degenerate at the origin if $c=2a$.
\end{example}

\medskip

We end this section with an easy observation  to detect in terms of the matrices when the intersection multiplicity is higher than $1$.
Assume that $a_0=(0,\ldots,0)$ and let $A'$ be the matrix with column vectors $a_{1},\ldots, a_{m+n}$ in this order
($A'$ is obtained by removing the first row and
 $0$-th column of $A$) and let $C'$ be the matrix $C$ with $0$-th column removed.
Given any reduced Gale matrix D, let $b=(b_{0},\ldots,b_{m+n})$ be any column vector of $B$. The corresponding Gale polynomial is
$g(y)=\prod_{b_{i}>0} (1+\langle \delta_{i}, y \rangle)^{b_{i}}-\prod_{b_{i}<0} (1+\langle \delta_{i}, y \rangle)^{-b_{i}}$ and we get
$$
g(y)=\sum_{i=1}^{m+n} b_{i} \langle \delta_{i}, y \rangle+ \mbox{terms of degree $\geq 2$}.
$$
So the linear part of the Gale system $g_1(y)=\cdots=g_{m}(y)=0$ equals
\begin{equation}\label{linear}
\sum_{i=1}^{m+n} b_{ij} \langle \delta_{i}, y \rangle= \sum_{i=0}^{m+n} b_{ij} \langle \delta_{i}, y \rangle=0, \quad j=1,\ldots,m,
\end{equation}
because $\delta_0=0$. It can be written as
$$
(B'^{t}D') \cdot y=0,$$
where  $y$ is  a column vector, $B'$ denotes the submatrix of $B$ without the first row  and $D'$ is the matrix obtained by removing the first column and the first row of $D$. 
Note that~\eqref{linear} is also the linear part of the $H$-dual system~\eqref{eq:H} by Theorem~\ref{th:dual}. 

\begin{lemma}
\label{L:linear part}
Assume that ${\mb 1}$ is a solution of the system $C \cdot x^{A}=0$ with multiplicity $\mu \geq 1$. Let $B$ be any Gale dual matrix of $A$ and let $D$ be any reduced Gale dual matrix of $C$.
The following conditions are equivalent:
\begin{enumerate}
\item$\mu \geq 2$ 
\item $\mbox{det}(A' \cdot C'^t) =0$
\item $\mbox{det}(B'^{t} \cdot D') =0$.
\end{enumerate}
\end{lemma}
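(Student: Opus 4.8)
The plan is to show that the three conditions are equivalent by going through the linear parts of the systems involved, using the toric-derivative characterization of multiplicity established earlier. First recall that, by Lemma~\ref{lem:theta} (with $\mb q = \mb 1$), the multiplicity $\mu$ of the system $C \cdot x^A = 0$ at $\mb 1$ is at least $2$ if and only if the toric derivatives $\theta^{e_\ell}(f_k)(\mb 1)$ vanish for all $k = 1,\dots,n$ and all $\ell = 1,\dots,n$, i.e.\ if and only if $L_k(e_\ell) = 0$ for all $k,\ell$ in the notation of Definition~\ref{def:LA}. Since $\mu \geq 1$ already forces $L_k(\mathbf 0) = \sum_j c_{kj} = 0$ (the row of $C$ annihilates the vector of ones, which after the normalization $a_0 = 0$ is $x^{a_0}$ evaluated at $\mb 1$), the extra condition $L_k(e_\ell) = 0$ for all $k, \ell$ says exactly that each row $c_k$ of $C$ is orthogonal to each row of the matrix $A'$ with columns $a_1, \dots, a_{m+n}$ — after using $\sum_j c_{kj} = 0$ to drop the $j=0$ term. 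Hence $\mu \geq 2$ is equivalent to $A' \cdot C'^t = 0$ as an $n \times n$ matrix identity. To get condition (2) from this I would argue by a rank count: the column vectors $a_1, \dots, a_{m+n}$ span $\R^n$ (the configuration is not contained in a hyperplane and $a_0 = 0$), so $A'$ has rank $n$; likewise $C'$ has rank $n$ since $C$ has rank $n$ and deleting the $0$-th column cannot drop the rank below $n$ once we know the row sums vanish (the all-ones column is then a linear combination of the others, so $C'$ already carries the full row space of $C$). Therefore $A' \cdot C'^t$, a product of an $n \times (m+n)$ matrix of rank $n$ with an $(m+n) \times n$ matrix of rank $n$, is the zero matrix if and only if it is singular, which is the statement $\det(A' \cdot C'^t) = 0$. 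This gives (1) $\Leftrightarrow$ (2), but I should be slightly careful: a priori $\det(A'C'^t) = 0$ only gives that $A'C'^t$ is singular, not zero; so the equivalence (1) $\Leftrightarrow$ (2) really needs the observation that the rows of $A'$ span $\R^n$ and the rows of $C'$ span $\R^n$, which forces the image of the linear map $C'^t$ composed with $A'$ to be all of $\R^n$ unless $A'C'^t$ is identically zero — equivalently, $\operatorname{rank}(A' C'^t) \in \{0, n\}$. That dichotomy is the crux of (1) $\Leftrightarrow$ (2).

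For (1) $\Leftrightarrow$ (3), I would use Theorem~\ref{Gale duality for one solution}: $\mu$ equals the multiplicity at the origin of the Gale dual system $g_1 = \dots = g_m = 0$. By Lemma~\ref{remark} and the explicit expansion recorded just before Lemma~\ref{L:linear part}, the linear part of $g_j$ is $\sum_{i=1}^{m+n} b_{ij} \langle \delta_i, y\rangle = 0$, i.e.\ the $j$-th row of the linear system $(B'^t D') \cdot y = 0$. Since $\mu \geq 1$ is automatic, $\mu \geq 2$ at the origin is equivalent to the linear parts of all the $g_j$ vanishing identically, i.e.\ to $B'^t D' = 0$ as an $m \times m$ matrix. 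Again a rank argument converts "$=0$" into "$\det = 0$": $B'$ has rank $m$ (the columns of $B$ form a basis of the rank-$m$ kernel of $A$, and deleting the first row cannot drop the rank, because the first-row entries are determined by the others via $\sum_i b_{ij} a_i^{(0)} = \sum_i b_{ij} = 0$ — here $a^{(0)}$ is the row of ones — so $B'$ still spans an $m$-dimensional space), and $D'$ has rank $m$ (the $\delta_i$ span $\R^m$ since $D$ is reduced of rank $m+1$ with a column of ones). Hence $B'^t D'$ has rank $0$ or $m$, and its determinant vanishes iff it is the zero matrix iff $\mu \geq 2$.

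The main obstacle, and the step I would write most carefully, is the passage from a determinant-vanishing statement to a matrix-is-identically-zero statement — i.e.\ establishing in both cases the rank dichotomy $\operatorname{rank} \in \{0, \text{full}\}$. This rests on the two "deleting a row/column does not lose rank" claims (for $A', C'$ on one side and $B', D'$ on the other), which in turn use that $\mu \geq 1$ forces the deleted data to be a linear combination of what remains: $\sum_j c_{kj} = 0$ makes the $0$-th column of $C$ redundant, and $B$'s columns lying in $\ker A$ makes the first row of $B$ (the one paired with the all-ones row of $A$) redundant. I would state these as two short lemmas-within-the-proof and then note that $A', D'$ automatically have full rank from the non-hyperplane and reducedness hypotheses. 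Once the dichotomy is in hand, (2) and (3) are each literally equivalent to "the relevant matrix is zero", hence to (1), and the proof closes. I should also remark that the equivalence (2) $\Leftrightarrow$ (3) can alternatively be seen directly from Gale duality of the matrices — $B$ is Gale dual to $A$ and $D$ to $C$ — which gives a clean consistency check, but routing everything through (1) is the shortest path.
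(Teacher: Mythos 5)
There is a genuine error at the very first step, and it propagates through both halves of your argument. You claim that $\mu \ge 2$ is equivalent to $\theta^{e_\ell}(f_k)(\mb 1)=0$ for \emph{all} $k$ and $\ell$, i.e.\ to $A'\cdot C'^t$ being the \emph{zero matrix}. That is not the criterion for a system: the intersection multiplicity of an isolated root of $f_1=\dots=f_n=0$ is $\ge 2$ if and only if the Jacobian \emph{determinant} vanishes there, not the whole Jacobian matrix. Requiring every $\theta_\ell f_k(\mb 1)$ to vanish would force $\mb 1$ to be a singular point of each hypersurface $f_k=0$, which is far stronger. The paper itself supplies a counterexample: for $n=2$, $m=1$, the system $f_{2,2}=f_{3,2}=0$ of Theorem~\ref{conj} has multiplicity $3$ at $\mb 1$, yet $\theta_2 f_{2,2}(\mb 1)=2\neq 0$. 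You then try to repair the mismatch between ``$A'C'^t=0$'' and condition (2) by asserting the rank dichotomy $\operatorname{rank}(A'C'^t)\in\{0,n\}$; this is also false. For full-rank $n\times(m+n)$ matrices $A'$ and $C'$ the product $A'C'^t$ can have any intermediate rank (take the first row of $C'$ orthogonal to the row space of $A'$ and the second row not), so $\det(A'C'^t)=0$ does not imply $A'C'^t=0$. The same two problems occur verbatim in your treatment of $(1)\Leftrightarrow(3)$. Two false equivalences composing to a true statement is not a proof.

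The intended argument is shorter and avoids both pitfalls. Since $a_0=\mb 0$ and $\sum_j c_{kj}=0$, the $(k,\ell)$ entry of the toric Jacobian $\bigl(\theta_\ell f_k(\mb 1)\bigr)$ is $\sum_{j=1}^{m+n} c_{kj}a_{\ell j}$, i.e.\ the toric Jacobian is $C'A'^t$; as $\mb 1$ has nonzero coordinates this is the usual Jacobian up to an invertible diagonal factor, so $\mu\ge 2$ iff $\det(A'C'^t)=0$, which is $(1)\Leftrightarrow(2)$ with no rank dichotomy needed. Likewise, the linear part of the Gale (or $H$-dual) system is $(B'^tD')\cdot y$, displayed in~\eqref{linear}, so its Jacobian at $\mb 0$ is $B'^tD'$ and Theorem~\ref{Gale duality for one solution} gives $(1)\Leftrightarrow(3)$. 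Alternatively, $(2)\Leftrightarrow(3)$ follows directly from the Cauchy--Binet expansion of both determinants together with the fact that maximal minors of a matrix and of its Gale dual are proportional to complementary minors up to sign, which is the route the paper indicates. Your closing remark gestures at this, but as a ``consistency check'' rather than as the proof; it should be the proof, and the first two paragraphs need to be replaced.
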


The proof of Lemma~\ref{L:linear part} follows from the computation of the toric jacobians of the systems and the use of the Cauchy Binet formula.

\section{Bounds for the local intersection multiplicity via duality}
\label{sec:ubound}

Assume a Gale dual matrix $B$ of $A$ and a reduced Gale dual matrix $D$ of $C$ as in~\eqref{Dreduced} are given.
Applying results of Section~\ref{sec:Aside} to the system $B^{t} \cdot (y+\mb 1)^{D^t}=0$ provides results for the Gale system
and then for $C \cdot (x+\mb 1)^{A}=0$ using Theorem~\ref{th:dual} and Theorem~\ref{Gale duality for one solution}.
We next define the dual notions of those introduced in Definition~\ref{def:rhost}, that will be used in the bounds given
in Theorem~\ref{T:t_{i}*} and its consequences Corollaries~\ref{C:t_{i}*} and~\ref{C:t_{i}bis*}.

\begin{definition}\label{def:rhost*}
Let $k, \ell \in \{1,\ldots,m\}$. For $u \in \{d_{0,\ell},\ldots, d_{m+n,\ell}\}$ 
define
\begin{equation}
\label{E:sum*}
\bar{b}_{k,\ell,u}=\sum_{d_{j, \ell}=u} b_{j k}
\end{equation}
We also set
\begin{equation}\label{E:cardCA*}
\rho_{k,\ell} ^{*}=\left| \{u \in  \{d_{0,\ell},\ldots, d_{m+n,\ell}\} \, , \bar{b}_{k,\ell,u} \neq 0\}\right|-1,
\end{equation}
\begin{equation}\label{E:cardC*}
s_k ^{*}= \left| \{  j \in \{0,\ldots, m+n\} \, , b_{jk} \neq 0\}\right|-1,
\end{equation}
and
\begin{equation}\label{E:card*}
t_\ell ^{*}= \left| \{ d_{j,\ell}, \, j=0,\ldots, m+n\}\right|-1.
\end{equation}
\end{definition}

We obviously have $\rho_{k,\ell}^* \leq \mbox{min}(s_k ^{*}, t_\ell ^{*})$.

\begin{lemma}
\label{lem:axis*}
The polynomial $g_{k}$ (resp., $H_{k}$) is identically zero on the $\ell$-coordinate axis if and only if the sum
$\bar{b}_{k,\ell,u}$ defined in~\eqref{E:sum*} vanishes for all $u \in  \{d_{0,\ell},\ldots, d_{m+n,\ell}\}$, equivalently, $\rho_{k,\ell} ^{*}=-1$.
Assume that $g_{k}$ (resp., $H_{k}$) is not identically zero on the $\ell$-coordinate axis and let 
$\beta=\beta_{k,\ell} \cdot \e_{\ell}$ be the intersection point of $\mathcal D(g_{k})$ (resp., $\mathcal D(H_{k})$) with the $\ell$-coordinate axis.
Then $\beta_{k,\ell} \le \rho_{k,\ell} ^{*} \le \mbox{min} (s_{k}^{*},t_\ell^{*})$, where $s_{k}^{*}$ and $t_{\ell}^{*}$ as in Definition~\ref{def:rhost*}. 
\end{lemma}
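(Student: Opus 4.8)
The plan is to recognize Lemma~\ref{lem:axis*} as the exact dual of Lemma~\ref{lem:axis}, obtained by feeding the $H$-dual system $B^t\cdot(y+\mb 1)^{D^t}=0$ into the machinery already developed for $C\cdot(x+\mb 1)^A=0$, and then transporting the conclusion to $g_k$ via the equality of Newton diagrams in Theorem~\ref{th:dual}. Concretely, the polynomials $h_k(y)=\sum_{i=0}^{m+n} b_{ik}\,y^{\delta_i}$ play the role of the $f_k$, the rows $\delta_i=(d_{i1},\dots,d_{im})$ of the reduced matrix $D$ play the role of the exponent vectors $a_j$, and the entries $b_{ik}$ play the role of the coefficients $c_{kj}$; under this dictionary the quantities $\bar b_{k,\ell,u}$, $\rho^*_{k,\ell}$, $s^*_k$, $t^*_\ell$ of Definition~\ref{def:rhost*} are literally the quantities $\bar c$, $\rho$, $s$, $t$ of Definition~\ref{def:rhost} for the $h_k$-system. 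Note however one subtlety: the $\delta_i$ are real (not integer) vectors, but the proof of Lemma~\ref{lem:axis} only uses that $A$ (here $D$) has maximal rank and that the relevant Vandermonde matrices built from the $\ell$-th coordinates are nonsingular on any set of distinct entries — both still hold over $\R$ — so the argument carries over verbatim.

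First I would record, using Theorem~\ref{th:dual} together with the observation in the proof of that theorem that $L^*_k(\alpha)=\theta^\alpha h_k(\mb 1)$, that a point $\beta\in\ZZ_{\ge 0}^m$ lies on (resp.\ is a vertex of) the Newton diagram $\mathcal D(H_k)$ precisely under the conditions governed by the vanishing of $L^*_k(\alpha)$ for $\alpha\le\beta$; and that by Theorem~\ref{th:dual} the diagram $\mathcal D(g_k)$ coincides with $\mathcal D(H_k)$, so it suffices to prove the statement for one of them, say $H_k$. Then I would apply Lemma~\ref{lem:axis}, in its form proved for an arbitrary support and coefficient matrix, to the system $h_1=\cdots=h_m=0$ in the $m$ variables $y$: it gives that $H_k$ vanishes identically on the $\ell$-coordinate axis iff all the sums $\bar b_{k,\ell,u}$ vanish (equivalently $\rho^*_{k,\ell}=-1$), and otherwise, writing $\beta_{k,\ell}\cdot\e_\ell$ for the intersection point of $\mathcal D(H_k)$ with that axis, that $\beta_{k,\ell}\le\rho^*_{k,\ell}\le\min(s^*_k,t^*_\ell)$. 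Since $\mathcal D(g_k)=\mathcal D(H_k)$, the same bound holds with $g_k$ in place of $H_k$, which is the assertion.

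The only genuine point requiring care — and the place I would spend most of the writing — is verifying that the hypotheses of Lemma~\ref{lem:axis} are actually met for the $h_k$-system despite the non-integrality of the $\delta_i$: one must note that $D$ has full rank $m+1$ (so at least one $d_{i\ell}$ is nonzero, which is what makes the Vandermonde argument work), and that Lemma~\ref{lem:tor}'s description of the Newton diagram in terms of the $L^*_k$ (via Proposition~\ref{diagram} or Theorem~\ref{th:dual}) replaces the direct appeal to $L_k$ used in the original. Everything else is a transcription through the dictionary $a_j\leftrightarrow\delta_i$, $c_{kj}\leftrightarrow b_{ik}$, $L_k\leftrightarrow L^*_k$. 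I expect no real obstacle; the lemma is deliberately set up so that Section~\ref{sec:Aside}'s arguments can be reused wholesale on the dual side.
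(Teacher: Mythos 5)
Your proposal is correct and follows exactly the paper's own route: the paper proves this lemma as ``a direct consequence of Theorem~\ref{th:dual} and Lemma~\ref{lem:axis} applied to the system $B^{t}\cdot(y+\mb 1)^{D^t}=0$.'' Your additional care about the non-integrality of the $\delta_i$ is a sensible (and harmless) elaboration of what the paper leaves implicit.
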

\begin{proof}
This is a direct consequence of Theorem~\ref{th:dual} and  Lemma~\ref{lem:axis}  applied to the system $B^{t} \cdot (y+\mb 1)^{D^t}=0$.
\end{proof}

\begin{proposition}
\label{prop:conv*}
Assume that $\bf 0$ is an isolated solution of the system Gale system $g_{1}=\cdots=g_{m}=0$. Then there exists an invertible matrix $R \in \C^{m \times m}$ such that
the Gale system associated to the Gale dual matrix $BR$ of $A$ and the reduced Gale dual matrix $D$ of $C$ is convenient,or equivalently, such that $R^{t}B^{t} \cdot (y+\mb 1)^{D^t}=0$ is convenient.
\end{proposition}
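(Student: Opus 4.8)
The plan is to mirror exactly the proof of Proposition~\ref{prop:conv}, transporting it through the $H$-duality of Theorem~\ref{th:dual}. The key point is that by Theorem~\ref{th:dual} the Gale system $g_1=\dots=g_m=0$ and the $H$-dual system $B^t\cdot(y+\mb1)^{D^t}=0$ have the same Newton diagrams $\mathcal D(g_k)=\mathcal D(H_k)$; in particular one is convenient if and only if the other is, and $g_k$ vanishes identically on the $\ell$-coordinate axis if and only if $H_k$ does. So it suffices to produce an invertible $R\in\C^{m\times m}$ making $R^tB^t\cdot(y+\mb1)^{D^t}=0$ convenient, which is equivalent to the stated claim about the Gale system associated to $BR$ and $D$.

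The argument then proceeds as follows. Fix $\ell\in\{1,\dots,m\}$. By Lemma~\ref{lem:axis*}, the polynomial $H_k$ (equivalently $g_k$) is identically zero on the $\ell$-coordinate axis if and only if $\bar b_{k,\ell,u}=0$ for all $u\in\{d_{0,\ell},\dots,d_{m+n,\ell}\}$, equivalently $\rho^*_{k,\ell}=-1$. If this held for all $k=1,\dots,m$ simultaneously, then the whole $\ell$-coordinate axis would lie in the solution set of $g_1=\dots=g_m=0$, contradicting the hypothesis that $\mb 0$ is an isolated solution. Hence for each $\ell$ there is some $k$ with $\bar b_{k,\ell,u}\neq 0$ for some $u$, i.e.\ some row of $B^tD^{(\cdot)}$-type data is nonzero; more precisely, the linear-in-the-row-vectors quantities $\bar b_{k,\ell,u}$ are not all zero. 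Therefore, taking suitable invertible $\C$-linear combinations of the columns of $B$ (equivalently, left-multiplying $B^t$ by an invertible matrix), we may arrange that for \emph{every} $k=1,\dots,m$ the relevant sum $\bar b_{k,\ell,u}$ is nonzero for some $u$, so that no $g_k$ (and no $H_k$) vanishes identically on the $\ell$-coordinate axis. Running this over $\ell=1,\dots,m$ and composing the resulting invertible matrices yields a single invertible $R\in\C^{m\times m}$ with the property that, for the Gale dual matrix $BR$ of $A$ together with the unchanged reduced $D$, no $g_k$ vanishes identically on any coordinate axis; equivalently $R^tB^t\cdot(y+\mb1)^{D^t}=0$ is convenient. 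Since convenience of a system means exactly that each polynomial's Newton diagram meets every coordinate axis (equivalently, the polynomial is not identically zero on any coordinate axis, by Definition~\ref{def:various0} and the remark following Lemma~\ref{lem:axis}), this finishes the proof.

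A mild subtlety to handle carefully is that replacing $B$ by $BR$ is the \emph{right} multiplication that is allowed on a Gale dual matrix of $A$ (the columns of $BR$ still span $\ker_\Z(A)\otimes\C$, though possibly only over $\C$), whereas what we need for the convenience condition on $g_k$ is that each column of $BR$ has nonzero coordinate-sums $\bar b_{k,\ell,u}$ after regrouping the columns of $D$ by the value of $d_{j,\ell}$; this is precisely the content of Lemma~\ref{lem:axis*}, which already packages the $H$-dual translate. One should also note that the argument only uses that the finitely many linear conditions ``$\bar b_{k,\ell,u}\neq 0$ for some $u$'' each define a nonempty Zariski-open set in the space of choices of $R$, and these can be intersected over all $(k,\ell)$ since each is nonempty. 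I expect the only real obstacle to be bookkeeping: making sure the reduction ``$\mb 0$ isolated $\Rightarrow$ for each $\ell$ some $k$ has a nonvanishing $\bar b_{k,\ell,u}$'' is stated for the Gale system (not just the $H$-dual system), which is handled by Theorem~\ref{Gale duality for one solution} together with Lemma~\ref{lem:axis*}; everything else is a verbatim transcription of the proof of Proposition~\ref{prop:conv}.
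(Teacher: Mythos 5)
Your proposal is correct and follows essentially the same route as the paper: for each $\ell$, the hypothesis that $\mb 0$ is isolated forces some $\bar b_{k,\ell,u}\neq 0$ (via Lemma~\ref{lem:axis*}), and then a generic invertible recombination of the columns of $B$ (i.e.\ $B\mapsto BR$) makes every $\bar b_{k,\ell,u}$-condition nonvanishing, exactly as in the proof of Proposition~\ref{prop:conv} transported through Theorem~\ref{th:dual}. Your explicit handling of the right-multiplication bookkeeping and the Zariski-open intersection is a slightly more detailed write-up of what the paper leaves as ``arguing as in the proof of Proposition~\ref{prop:conv}.''
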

\begin{proof}
Let $\ell \in \{1,\ldots,m\}$, then there exist $k$ and $u \in \{d_{0,\ell},\ldots, d_{m+n,\ell}\}$ such that $\bar{b}_{k,\ell,u} \neq 0$ 
for otherwise $g_{k}$ would vanish identically on the $\ell$-coordinate axis by Lemma~\ref{lem:axis*}. Then arguing as in the proof of
Proposition~\ref{prop:conv}, or using  Proposition~\ref{prop:conv}  and Theorem~\ref{th:dual}, we can multiply $B$ on the left by an invertible matrix so that no polynomial of the associated Gale system vanishes identically on any coordinate axis.
\end{proof}

\begin{proposition}
\label{prop:convenient*}
There exists an invertible matrix $R \in \R^{(m+1) \times (m+1)}$ such that the Gale system associated to the Gale dual matrix $B$ of $A$ and the reduced Gale dual matrix $DR$ of $C$ is convenient, or equivalently, such that $B^{t} \cdot (y+\mb 1)^{R^{t}D^{t}}=0$ is convenient.
\end{proposition}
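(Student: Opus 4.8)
The plan is to transpose the proof of Proposition~\ref{prop:convenient} to the dual side. Since the Gale system $g_{1}=\cdots=g_{m}=0$ and the system $B^{t}\cdot(y+\mb 1)^{D^t}=0$ have the same Newton diagrams by Theorem~\ref{th:dual}, the two assertions in the statement are equivalent, so it suffices to produce $R$ making the Gale system convenient. By Lemma~\ref{lem:axis*} the polynomial $g_{k}$ is identically zero on the $\ell$-th coordinate axis exactly when all the sums $\bar{b}_{k,\ell,u}=\sum_{d_{j,\ell}=u}b_{jk}$ vanish, so what must be arranged is that for every $k$ and every $\ell$ at least one of these sums is nonzero.

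One first records the single ingredient that has no analogue in Proposition~\ref{prop:convenient}: no $g_{k}$ is the zero polynomial. Indeed, $\mb 1$ is assumed to be an isolated solution of $C\cdot x^{A}=0$, so by Theorem~\ref{Gale duality for one solution} the origin is an isolated solution of the Gale system; were some $g_{k}$ identically zero, the remaining $m-1$ equations would still vanish at $\mb 0$ and would cut out, in a neighborhood of $\mb 0$ (where the arrangement $H$ is avoided), a set of dimension $\geq 1$ through the origin, contradicting isolatedness. Equivalently, writing $v_{1}=\mb 0,v_{2},\dots,v_{r}$ for the distinct vectors among $\delta_{0},\dots,\delta_{n+m}$ and $\bar{B}_{s,k}=\sum_{\delta_{i}=v_{s}}b_{ik}$, this says that $\varphi_{k}=\prod_{s}(1+\langle v_{s},y\rangle)^{\bar{B}_{s,k}}\not\equiv 1$, i.e.\ the column $(\bar{B}_{s,k})_{s}$ is nonzero for each $k$.

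I would then take $R$ to be block diagonal with blocks $1$ and an invertible matrix $G\in\R^{m\times m}$. This keeps $D$ reduced (the first row remains $(1,0,\dots,0)$ and the first column remains the all-ones vector), it is still a Gale dual matrix of $C$ because $R$ is invertible, and it replaces each row $\delta_{i}$ of $D$ by $\delta_{i}G$; that is, it is a linear change of the coordinates $y$. I would choose $G$ so that for each $\ell\in\{1,\dots,m\}$ the $\ell$-th coordinate function separates $v_{1},\dots,v_{r}$: for each $\ell$ and each pair $s\neq s'$ the requirement that $v_{s}-v_{s'}$ have nonzero inner product with the $\ell$-th column of $G$ excludes only a proper linear subspace of possible $\ell$-th columns, so a generic invertible $G$ works. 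This is exactly the dual of the step in Proposition~\ref{prop:convenient} where $A$ is replaced by $M'A$ so that every non-constant row has pairwise distinct entries.

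With such a $G$, two indices $i,j$ satisfy $(\delta_{i}G)_{\ell}=(\delta_{j}G)_{\ell}$ for one (hence every) $\ell$ if and only if $\delta_{i}=\delta_{j}$; therefore the index sets over which the sums $\bar{b}_{k,\ell,u}$ run are precisely the fibres of $i\mapsto\delta_{i}$, and these sums are, up to relabeling and independently of $\ell$, the numbers $\bar{B}_{s,k}$. By the second paragraph they are not all zero, so by Lemma~\ref{lem:axis*} no $g_{k}$ vanishes identically on any coordinate axis; hence the Gale system attached to $B$ and $DR$, and with it $B^{t}\cdot(y+\mb 1)^{R^{t}D^{t}}=0$ by Theorem~\ref{th:dual}, is convenient. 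The main obstacle, and the only point at which the argument departs from a verbatim translation of Proposition~\ref{prop:convenient}, is that the dual ``exponents'' $\delta_{0},\dots,\delta_{n+m}$ may coincide (whereas $a_{0},\dots,a_{n+m}$ are $n+m+1$ distinct lattice points), so that no change of coordinates can separate them; what rescues the proof is precisely that isolatedness of the common solution forces every $g_{k}$ to be nonzero.
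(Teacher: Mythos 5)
Your proof is correct, and it follows the same overall strategy as the paper, whose entire proof reads: ``This follows from Proposition~\ref{prop:convenient} applied to the system $B^{t}\cdot(y+\mb 1)^{D^t}=0$ and Theorem~\ref{th:dual}.'' The difference is that you actually carry out the transposition and, in doing so, correctly identify the one place where it is not verbatim: the proof of Proposition~\ref{prop:convenient} produces $M'$ with ``distinct coefficients in each row of $M'A$,'' which is possible only because the columns of $A$ (the distinct points $(1,a_j)$) are pairwise distinct, whereas the columns of $D^{t}$ (the rows $(1,\delta_i)$ of $D$) may coincide — e.g.\ already for $A$ the unit square and $C$ encoding $1-x_1=1-x_1x_2=0$ three rows of the reduced $D$ coincide while $\mb 1$ is isolated. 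Your patch is the right one: for a generic block-diagonal $R=\operatorname{diag}(1,G)$ the fibres of $j\mapsto (DR)_{j\ell}$ are exactly the fibres of $j\mapsto\delta_j$, so by Lemma~\ref{lem:axis*} convenience reduces to the nonvanishing of some aggregated coefficient $\bar B_{s,k}$ for each $k$, which you derive from $g_k\not\equiv 0$, which in turn follows from the standing hypothesis that $\mb 1$ is isolated via Theorem~\ref{Gale duality for one solution} and a dimension count (indeed, without isolatedness the conclusion can genuinely fail, since $g_k\equiv 0$ for some $k$ forces every nearby reduced Gale dual to be non-convenient). So your argument is not only valid but supplies a step that the paper's one-line citation glosses over; the only cosmetic remark is that your equivalence ``$\varphi_k\not\equiv 1$ iff the column $(\bar B_{s,k})_s$ is nonzero'' silently uses $\sum_s\bar B_{s,k}=0$ to handle the factor indexed by $v_s=\mb 0$, which is worth one sentence.
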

\begin{proof}
This follows from Proposition~\ref{prop:convenient} applied to the system $B^{t} \cdot (y+\mb 1)^{D^t}=0$ and Theorem~\ref{th:dual}.
\end{proof}

Recall that we denote by $\mu$ the multiplicity at the isolated solution $\bf 1$ of the system $C \cdot x^A=0$. This is also the multiplicity at $\bf 0$ of the Gale system $g_{1}=\cdots=g_{m}=0$
by Theorem~\ref{Gale duality for one solution}.
For $k=1,\ldots,m$ consider the polytope
\begin{equation}
\label{eq:Dpolytope*}
\Gamma_{k}^{*}=\mbox{conv} \, \{\rho_{k,\ell}^{*} \cdot \e_{\ell}, \, \ell=1,\ldots,m\}
\end{equation}
where $\rho_{k,\ell}^{*} $ is defined in~\eqref{E:cardCA*}.

\begin{theorem}
\label{T:t_{i}*}
Assume that the Gale system $g_{1}=\cdots=g_{m}=0$ is convenient and non-degenerate at $\mb 0$.
Then
$$
\mu \leq  \Vol^{\circ}(\Gamma_{1}^{*},\ldots,\Gamma_m^{*}) \leq \mbox{min}\left(\prod_{i=1}^{m} s_i^{*},\prod_{i=1}^{m} t_i^{*}\right),$$
where $s_1^*, \ldots, s_{m}^*$ and $t_1^{*},\ldots, t_{m}^{*}$ are defined in~\eqref{E:cardC*} and in~\eqref{E:card*}.
\end{theorem}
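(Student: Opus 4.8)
The plan is to deduce Theorem~\ref{T:t_{i}*} directly from Theorem~\ref{T:t_{i}} by applying it to the $H$-dual system $B^{t} \cdot (y+\mb 1)^{D^t}=0$, and then transporting the conclusion back to $C \cdot x^A=0$ via the duality results already established. The roles of $n$ and $m$ are swapped in the dual picture: the $H$-dual system lives in $m$ variables and has $n+m+1$ monomials with exponents the rows $\delta_0,\ldots,\delta_{n+m}$ of $D$, while its coefficient matrix is $B^{t}$. Thus the quantities $\rho_{k,\ell}^{*}$, $s_k^{*}$, $t_\ell^{*}$ of Definition~\ref{def:rhost*} are precisely the quantities $\rho_{k,\ell}$, $s_k$, $t_\ell$ of Definition~\ref{def:rhost} computed for this dual system, and the polytopes $\Gamma_k^{*}$ are the polytopes $\Gamma_k$ of~\eqref{eq:Dpolytope} for the dual system.

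First I would record that by Theorem~\ref{th:dual} the Gale system $g_1=\cdots=g_m=0$ and the $H$-dual system $B^{t} \cdot (y+\mb 1)^{D^t}=0$ have the same Newton diagrams; hence, since we assume the Gale system is convenient and non-degenerate at $\mb 0$, so is the $H$-dual system (conveniences and face resultants depend only on the Newton diagrams and the relevant truncated coefficients, which coincide up to the nonzero scalars $(-1)^{|\beta|-1}(|\beta|-1)!$ appearing in Theorem~\ref{th:dual}). Moreover, by Theorem~\ref{Gale duality for one solution} and Theorem~\ref{th:dual}, the multiplicity $\mu$ at $\mb 1$ of $C \cdot x^A=0$ equals the multiplicity at $\mb 0$ of the Gale system, which in turn equals the multiplicity at $\mb 0$ of the $H$-dual system under the convenient and non-degenerate hypothesis.

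Next I would apply Theorem~\ref{T:t_{i}} verbatim to the system $B^{t} \cdot (y+\mb 1)^{D^t}=0$ in $m$ variables, with coefficient matrix $B^{t}$ and support matrix $D^{t}$. This yields
$$
\mu \leq \Vol^{\circ}(\Gamma_1^{*},\ldots,\Gamma_m^{*}) \leq \min\left(\prod_{i=1}^{m} s_i^{*}, \prod_{i=1}^{m} t_i^{*}\right),
$$
since the polytopes and the integers $s_i^{*}$, $t_i^{*}$ produced by Theorem~\ref{T:t_{i}} applied to the dual system are exactly those of Definition~\ref{def:rhost*} and~\eqref{eq:Dpolytope*}, by the dual version of Lemma~\ref{lem:axis}, namely Lemma~\ref{lem:axis*}. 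One small point to verify is that Theorem~\ref{T:t_{i}} is stated for polynomials, whereas the $H$-dual ``system'' consists of analytic functions $y^{\delta_i}$; but the proof of Theorem~\ref{T:t_{i}} uses only the Newton diagrams of the $H_k$ at $\mb 0$ together with Theorem~\ref{main co-mixed}, both of which are valid for analytic germs as noted after Theorem~\ref{main co-mixed}, and the combinatorial Lemma~\ref{lem:axis} is reformulated for this setting in Lemma~\ref{lem:axis*}.

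I do not expect a serious obstacle here: the theorem is essentially a formal transport of Theorem~\ref{T:t_{i}} across the duality square~\eqref{eq:resume}. The only place requiring care is making sure the convenient and non-degenerate hypotheses genuinely pass between the Gale system and the $H$-dual system; this is exactly what Theorem~\ref{th:dual} provides, since non-degeneracy is a condition on the truncations of the equations to faces of the common Newton diagram and these truncations agree up to nonzero constants. With those identifications in place the chain of inequalities is immediate.
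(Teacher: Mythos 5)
Your overall strategy --- transporting the argument of Theorem~\ref{T:t_{i}} across the duality square --- is the intended one, and your identification of $\rho^*_{k,\ell}$, $s_k^*$, $t_\ell^*$ and $\Gamma_k^*$ with the quantities of Definition~\ref{def:rhost} computed for the system $B^{t}\cdot(y+\mb 1)^{D^t}=0$ is correct. The gap is in your first step: you claim that because the Gale system is convenient and non-degenerate at $\mb 0$, so is the $H$-dual system, on the grounds that the coefficients on the common Newton diagram ``coincide up to the nonzero scalars $(-1)^{|\beta|-1}(|\beta|-1)!$''. Convenience does transfer, since it only depends on the Newton diagrams, which agree by Theorem~\ref{th:dual}. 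But non-degeneracy does not follow: the scalar $(-1)^{|\beta|-1}(|\beta|-1)!$ depends on $|\beta|$, so two monomials lying on the same face of the common Newton diagram are rescaled by \emph{different} constants whenever their total degrees differ. The truncations $(g_k|_K)^\epsilon$ and $(H_k|_K)^\epsilon$ are therefore not proportional (nor related by a monomial change of coordinates), and the existence of a common torus solution is not preserved under such coefficientwise rescaling. This is precisely why the final assertion of Theorem~\ref{th:dual} assumes that \emph{both} the Gale system and the system~\eqref{eq:BD} are convenient and non-degenerate, and why Question~\ref{q:ndc} is posed as open rather than settled by Theorem~\ref{th:dual}. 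As written, your application of Theorem~\ref{T:t_{i}} to the $H$-dual system, and your appeal to the equality of its multiplicity $\mu'$ with $\mu$, both rest on this unverified hypothesis.

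The repair is to bypass the $H$-dual system entirely, which is what the paper does. The Gale system $g_1=\cdots=g_m=0$ consists of genuine polynomials in $m$ variables, is convenient and non-degenerate at $\mb 0$ by hypothesis, and has multiplicity $\mu$ there by Theorem~\ref{Gale duality for one solution}; hence Theorem~\ref{main co-mixed} gives $\mu=\Vol^{\circ}(\Delta(g_{1}),\ldots,\Delta(g_{m}))$. Lemma~\ref{lem:axis*} is stated for the $g_k$ themselves and bounds the intersection of $\mathcal D(g_k)$ with the $\ell$-th coordinate axis by $\rho^*_{k,\ell}\le\min(s_k^*,t_\ell^*)$, which yields $\Gamma_k^*+\R_{\ge0}^m\subset\Delta(g_k)+\R_{\ge0}^m$ and its analogues for the simplices built from the $s_k^*$ and the $t_\ell^*$; monotonicity and multilinearity of the mixed covolume (Proposition~\ref{decreasing property}) then give the stated chain of inequalities exactly as in the proof of Theorem~\ref{T:t_{i}}. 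The $H$-dual system is only needed to \emph{prove} Lemma~\ref{lem:axis*} (via the equality of Newton diagrams in Theorem~\ref{th:dual}), not to run the multiplicity argument itself.
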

\begin{proof}
This is similar to the proof of Theorem~\ref{T:t_{i}} using  Lemma~\ref{lem:axis*}.
\end{proof}

\begin{corollary}
\label{C:t_{i}*}
Assume that the Gale system $g_{1}=\cdots=g_{m}=0$ is convenient and non-degenerate at $\mb 0$. If $B$ contains a diagonal invertible matrix of size $m\times m$  then
$$
\mu \leq (n+1)^{m}
.$$
\end{corollary}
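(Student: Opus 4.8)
The plan is to obtain Corollary~\ref{C:t_{i}*} as an immediate consequence of Theorem~\ref{T:t_{i}*}, in exact parallel with the way Corollary~\ref{C:t_{i}} is deduced from Theorem~\ref{T:t_{i}}. Since the Gale system $g_{1}=\cdots=g_{m}=0$ is assumed convenient and non-degenerate at $\mb 0$, Theorem~\ref{T:t_{i}*} already yields $\mu \leq \prod_{i=1}^{m} s_i^{*}$. So the only thing left to do is to bound each $s_k^{*}$ from above by $n+1$ under the additional hypothesis that $B$ contains an invertible diagonal $m \times m$ submatrix.

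First I would recall from~\eqref{E:cardC*} that $s_k^{*}+1$ is exactly the number of nonzero entries of the $k$-th column of $B$, and then count these entries using the diagonal block. The matrix $B$ has $n+m+1$ rows; pick the $m$ rows, with indices $i_1,\ldots,i_m$, on which $B$ restricts to an invertible diagonal matrix, so that $b_{i_k,k}\neq 0$ while $b_{i_\ell,k}=0$ for $\ell\neq k$. Then, for each fixed $k$, among the entries $b_{i_1 k},\ldots,b_{i_m k}$ only the diagonal one is nonzero, while the remaining $n+1$ rows of $B$ contribute at most $n+1$ further nonzero entries to the $k$-th column. Hence that column has at most $n+2$ nonzero entries, i.e.\ $s_k^{*}\leq n+1$ for all $k=1,\ldots,m$.

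Feeding $s_k^{*}\leq n+1$ into the inequality $\mu \leq \prod_{i=1}^{m} s_i^{*}$ of Theorem~\ref{T:t_{i}*} gives $\mu \leq (n+1)^{m}$, as claimed. I do not expect any genuine obstacle here: the only points that need a little care are the off-by-one in the definition of $s_k^{*}$ and the observation that it is the diagonality (not merely the nonsingularity) of the chosen block that forces the extra zeros in each column of $B$.
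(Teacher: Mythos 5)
Your proof is correct and follows exactly the route the paper takes: apply Theorem~\ref{T:t_{i}*} to get $\mu \leq \prod_{i=1}^{m} s_i^{*}$ and then observe that the diagonal invertible $m\times m$ block forces $s_k^{*}\leq n+1$. The paper states this in one line; your explicit count (one nonzero entry among the $m$ diagonal-block rows plus at most $n+1$ from the remaining rows) is precisely the justification it leaves implicit.
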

\begin{proof}
This follows from Theorem~\ref{T:t_{i}*} noting that $s_{k}^{*} \leq n+1$ for $k=1,\ldots,m$ if $B$ contains a diagonal invertible matrix of size $m \times m$.
\end{proof}

\begin{proposition}
\label{C:t_{i}bis*}
Assume that the Gale system $g_{1}=\cdots=g_{m}=0$ is convenient and non-degenerate at $\mb 0$.
Then
$$
\mu \leq (n+m)^{m}
.$$
When $m=2$, we have the better bound $\mu \leq (n+1)(n+2)$.
\end{proposition}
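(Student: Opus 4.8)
The plan is to deduce this from Theorem~\ref{T:t_{i}*} by bounding the quantities $t_\ell^*$ (or $s_k^*$) that appear there. Recall that $t_\ell^* = |\{d_{j,\ell} : j = 0,\ldots,m+n\}| - 1$, i.e. one less than the number of distinct entries in the $\ell$-th column of the reduced Gale dual matrix $D$. Since $D$ has $n+m+1$ rows, there are at most $n+m+1$ distinct values in any column, so $t_\ell^* \le n+m$ for each $\ell = 1,\ldots,m$. Plugging this into the bound $\mu \le \prod_{i=1}^m t_i^*$ from Theorem~\ref{T:t_{i}*} immediately gives $\mu \le (n+m)^m$.

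For the sharper bound when $m=2$, I would mimic the argument in Proposition~\ref{P:bettern=2} (and the Remark following it), now applied on the dual side to the system $B^t \cdot (y+\mb 1)^{D^t}=0$ via Theorem~\ref{th:dual}. By hypothesis the Gale system is convenient, so each Newton diagram $\mathcal D(g_k)$ ($k=1,2$) meets both coordinate axes; write $\beta_{k,\ell}\cdot\e_\ell$ for the intersection point with the $\ell$-axis. By Lemma~\ref{lem:axis*}, $\beta_{k,\ell}\le t_\ell^* \le n+2$. The key point is that $\beta_{1,\ell}$ and $\beta_{2,\ell}$ cannot both equal $n+2$: in that case, following the proof of Lemma~\ref{lem:axis}/\ref{lem:axis*}, both columns $b_{\cdot 1}$ and $b_{\cdot 2}$ of $B$ (after collapsing repeated $d_{j,\ell}$ values) would lie in the one-dimensional kernel of a Vandermonde matrix of the appropriate size, contradicting the fact that $B$ has full rank $m=2$. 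So for each $\ell\in\{1,2\}$ we are in one of three situations: $\beta_{1,\ell},\beta_{2,\ell}\le n+1$; or $\beta_{1,\ell}\le n+1$, $\beta_{2,\ell}= n+2$; or $\beta_{2,\ell}\le n+1$, $\beta_{1,\ell}= n+2$.

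By monotonicity of the mixed covolume (Proposition~\ref{decreasing property}) and Theorem~\ref{main co-mixed}, $\mu = \Vol^\circ(\mathcal D(g_1),\mathcal D(g_2)) \le \Vol^\circ(\Delta_1,\Delta_2)$ where $\Delta_k$ is the segment from $\beta_{k,1}\cdot\e_1$ to $\beta_{k,2}\cdot\e_2$. Using monotonicity again, up to swapping $\Delta_1\leftrightarrow\Delta_2$ and permuting the two coordinate axes, it suffices to bound $\Vol^\circ(\Delta_1,\Delta_2)$ in the two extremal cases (i) $\Delta_1=[(n+1,0),(0,n+1)]$, $\Delta_2=[(n+2,0),(0,n+2)]$ and (ii) $\Delta_1=[(n+1,0),(0,n+2)]$, $\Delta_2=[(n+2,0),(0,n+1)]$. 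A direct computation with Formula~\eqref{alternating sum} gives $\Vol^\circ(\Delta_1,\Delta_2)=(n+1)(n+2)$ in case (i) and $(n+1)^2$ in case (ii), so in either case $\mu\le (n+1)(n+2)$.

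The only genuine obstacle is the Vandermonde/full-rank step — checking carefully that $\beta_{1,\ell}=\beta_{2,\ell}=n+2$ forces a rank deficiency in $B$ — but this is exactly parallel to the argument already carried out in Proposition~\ref{P:bettern=2}, transported to the dual system through Theorem~\ref{th:dual} and Lemma~\ref{lem:axis*}; the rest is routine monotonicity of the mixed covolume plus the elementary covolume computation of a segment pair in the plane.
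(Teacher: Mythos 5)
Your proposal is correct and follows essentially the same route as the paper: the general bound comes from Theorem~\ref{T:t_{i}*} via $t_\ell^*\le n+m$, and the $m=2$ refinement is exactly the paper's suggestion of transporting the proof of Proposition~\ref{P:bettern=2} to the dual side through Theorem~\ref{th:dual} and Lemma~\ref{lem:axis*}. In fact you supply more detail than the paper does for the $m=2$ case (the Vandermonde/rank-of-$B$ step and the two extremal covolume computations), and that filled-in argument is sound.
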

\begin{proof}
This follows from Theorem~\ref{T:t_{i}*} noting that $t_{\ell}^{*} \leq m+n$ for $\ell=1,\ldots,m$. A proof of the last statement can be obtained using Theorem~\ref{th:dual}
and the proof of Proposition~\ref{P:bettern=2}. 
\end{proof}

\section{An explicit sparse system with high local multiplicity} \label{sec:lbound}

Given $n$ and $d$, consider the $n$-variate polynomials in~\eqref{eqn:poly}
and the explicit system $f_{m+1, n}(z) = f_{m+2, n}(z) = \dots = f_{m+n, n}(z) = 0$ 
from~\eqref{eq:fij}. This section is devoted to prove Theorem~\ref{conj}, together with
the refined result in Proposition~\ref{prop:n=2} in the case of dimension $2$.
The support of the system $f_{m+1, n}(z) = f_{m+2, n}(z) = \dots = f_{m+n, n}(z) = 0$ is
\begin{equation}\label{eq:polytope}
\A_{n,m} \, = \, \{ \mb 0, \mb 1, (2,4,\dots, 2^n), \dots, (m+n, (m+n)^2, \dots, (m+n)^n) \}.
\end{equation}
We denote by $A_{n,m} \in \Z^{(n+1) \times(n+m+1)}$ the associated exponent matrix
\begin{equation}\label{A}
A_{n,m}= \left( j^{i} \right)_{0 \leq i \leq n,0 \leq j \leq m+n}.\end{equation}
Note that $A_{n,m}$ has full rank $n+1$ and $N = n+m+1$ columns, so the dimension of the system is $n$ and
the codimension of the support set $\A_{n,m}$ is $m$.

The system $f_{m+1, n}(x) = f_{m+2, n}(x) = \dots = f_{m+n, n}(x) = 0$ can be written as
\begin{equation}\label{Jens}
C_{n,m} \cdot x^{A_{n,m}}=0
\end{equation}
 with
\begin{equation}\label{C}
C_{n,m}= \left({(-1)}^j {m+i \choose j} \right)_{1 \leq i \leq n,0 \leq j \leq m+n}\end{equation}
with the convention that ${a \choose b}=0$
when $b>a$.
We will need the following elementary result.

\begin{lemma}\label{L:bino}
For any integer numbers $k,\ell  \geq 0$, the sum
$$
\sum_{q=0}^{\ell} {(-1)}^{q} {\ell \choose q} q^{k}
$$
is equal to the coefficient of $x^{k}$ in the expansion as a series of
$k ! \cdot  (1-e^{x})^{\ell}$.
As a consequence this sum is equal to $0$ when $k <\ell$ and is non-zero for $k \geq \ell$.\end{lemma}

\begin{proposition}\label{Gale duals}
For any positive integers $n,m$ consider the matrices~\eqref{A} and~\eqref{C}.
The transposed matrix $A_{m.n}^{t}$
is Gale dual to the matrix $C_{n,m}$.
The matrix 
$C_{m,n}^{t}$ is $\Z$-Gale dual to the matrix $A_{n,m}$.
\end{proposition}

\begin{proof}
First, note that all matrices  $C_{n,m}$ and $A_{n,m}$ have maximal rank. Using Lemma ~\ref{L:bino} we obtain that $C_{n,m} \cdot A_{m,n}^{t}=0$
and thus $A_{m,n}^{t}$ is Gale dual to $C_{n,m}$ since $A_{m,n}^{t}$ has rank $m+1$. From $C_{n,m} \cdot A_{m,n}^{t}=0$ we get $A_{m,n} \cdot C_{n,m}^{t}=0$
and then $A_{n,m} \cdot C_{m,n}^{t}=0$ by permuting $m$ and $n$. It follows that $C_{m,n}^{t}$ is Gale dual to $A_{n,m}$ since $C_{m,n}^{t}$ has full rank $m$.
In fact $C_{m,n}^{t}$ is a $\Z$-Gale dual matrix of $A_{n,m}$ since it has one maximal minor which equals $\pm 1$.
\end{proof}

We first show that $\mb 1$ is an isolated solution of system~\ref{eq:fij}. For this, it is enough to prove that
our system has a finite number of solutions in the torus $(\C^*)^n$. Let $P_{n,m}$ be the convex-hull of 
$\A_{n,m}$. Note that $P_{n,m}$ is a cyclic polytope. In fact, using BKK Theorem
(see \cite{Be,Kho, Kush}), we show that
the sum of the multiplicities of the solutions in the complex torus of the system equals the lattice volume ${\rm vol}_\Z(P_{n,m})$ of
$P_{n,m}$, where ${\rm vol}_\Z(P)$ is equal to $n!$ times the Euclidean volume of $P$.

\begin{proposition}\label{prop:finite}
For any value of $n, m \ge 1$, the system  $f_{m+1, n}(z) = f_{m+2, n}(z) = \dots = f_{m+n, n}(z) = 0$ in~\eqref{eq:fij} has a finite
number of solutions in  $(\C^*)^n$.  In particular $\mb 1$ is an isolated common root.
\end{proposition}

\begin{proof}
It follows from~\cite{Sh} that $\A_{n,m}$ is the set of vertices of its convex-hull $P_{n,m}$ ($P_{n,m}$ is a cyclic polytope) and that
all proper faces of $P_{n,m}$ are simplices. Thus any $k$-dimensional face of $P_{n,m}$ with $k <n$ is a $k$-simplex intersecting
$\A_{n,m}$ at its $k+1$ vertices.

The coefficient matrix $C_{n,m}$ of system~\ref{eq:fij} is Gale dual to the transpose $A_{m,n}^t$ of the exponent matrix by Proposition~\ref{Gale duals}. 
It is straightforward to see that all maximal minors of $A_{m,n}$ are non-zero because  they are Vandermonde determinants. 
A basic result in Gale duality says that there is a non-zero constant $\lambda$
such that  any maximal minor of $C_{n,m}$ equals $\lambda$ time the complementary minor of $A_{m,n}^t$, and so we deduce that all maximal
minors of $C_{n,m}$ are non-zero. 
Write $\gamma(k) = (k, k^2,\dots, k^n)$ so that $\A_{n,m}=\{\gamma(0), \ldots, \gamma(m+n)\}$ and $f_{m+i,n}(x)= \sum_{k =0}^{m+i} (-1)^k\, {m+i \choose k} \, x^{\gamma(k)}$.

We now prove that for any face $F$  of $P_{n,m}$ of dimension $k$ with $1 \le k \le n-1$ the facial system
$f_{m+1,n}^{ F}=\cdots= f_{m+n,n}^{F}=0$ does not have any common root in the complex torus, where:
\begin{equation}\label{eq:fijk}
f_{m+i,n}^{F}(x) \, = \, \sum_{\{ k \le m+i \, : \, \gamma(k) \in F\}} (-1)^k\, {m+i \choose k} \, x^{\gamma(k)}.
\end{equation}
The coefficient matrix of system~\eqref{eq:fijk} is the submatrix $C^F$ of $C$ of size $n \times (k+1)$ obtained by removing the columns of $C$ 
corresponding to elements of $\A_{n,m}$ which are not in $F$.
Since all maximal minors of $C$ are non-zero, the matrix $C^F$ has full rank $k+1 \leq n$, hence there is a square submatrix of  $C^F$ of size $k+1$ which is invertible.
 This square submatrix cannot have a non-zero vector in its kernel, and it follows that system~\eqref{eq:fijk} cannot have a solution in the complex torus corresponding to
 the face $F$. 
 Then, there are finitely many solutions of system~\eqref{eq:fijk} in the complex torus $(\C^*)^n$   and
 by the BKK Theorem
(see \cite{Be,Kho, Kush}) the sum of their intersection multiplicities is equal to 
the lattice volume ${\rm vol}_\Z(P_{n,m})$. In particular $\mb 1$ is an isolated solution.
\end{proof}

We now state the main result of this section:

\begin{theorem}\label{conj}
The system $f_{m+1, n}(x) = f_{m+2, n}(x) = \dots = f_{m+n, n}(x) = 0$ has a root at 
$q=\mb 1$ of multiplicity at least ${n+m \choose n}$.
\end{theorem}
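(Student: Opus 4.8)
The plan is to reduce the multivariate bound to the structure of the polynomials $f_{d,n}$ via a careful bookkeeping of toric derivatives at $\mb 1$. By Lemma~\ref{lem:theta}, applied with $\mb q = \mb 1$, we have $\theta^\beta(f_{d,n})(\mb 1) = \partial^\beta(F_{d,n})(\mb 0)$ where $F_{d,n}(x) = f_{d,n}(x+\mb 1)$, so it suffices to understand the vanishing of $\theta^\beta(f_{d,n})$ at $\mb 1$ for $\beta \in \Z_{\ge 0}^n$. Writing $\theta_i = x_i\partial_i$, one computes
\begin{equation}\label{eq:theta-action}
\theta^\beta\Big(\sum_{k=0}^d (-1)^k \binom{d}{k} x_1^k x_2^{k^2}\cdots x_n^{k^n}\Big) = \sum_{k=0}^d (-1)^k \binom{d}{k}\, k^{\beta_1}\, k^{2\beta_2}\cdots k^{n\beta_n}\, x_1^k x_2^{k^2}\cdots x_n^{k^n},
\end{equation}
so evaluating at $\mb 1$ gives $\theta^\beta(f_{d,n})(\mb 1) = \sum_{k=0}^d (-1)^k \binom{d}{k}\, k^{w(\beta)}$ where $w(\beta) = \beta_1 + 2\beta_2 + \dots + n\beta_n$. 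The key classical fact is that the finite difference identity $\sum_{k=0}^d (-1)^k\binom{d}{k} k^j = 0$ holds for all $j < d$ (and equals $(-1)^d d!$ for $j=d$). Hence $\theta^\beta(f_{d,n})(\mb 1) = 0$ precisely when $w(\beta) < d$.

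Now I would apply this with $d = m+n$ for each of the $n$ equations $f_{m+1,n}, \dots, f_{m+n,n}$; in fact all of $f_{m+1,n},\dots,f_{m+n,n}$ have the weight threshold $m+1,\dots,m+n$ respectively, but it is cleanest to observe that for every $r = 1,\dots,n$ the equation $f_{m+r,n}$ satisfies $\theta^\beta(f_{m+r,n})(\mb 1) = 0$ whenever $w(\beta) < m+r$, and in particular whenever $w(\beta) \le m$. So for any $\beta$ with $w(\beta) \le m$ we have $\theta^\alpha(f_{m+r,n})(\mb 1) = 0$ for all $\alpha \le \beta$ with $\alpha \ne \beta$ is automatic once $w(\beta) \le m$ since $w(\alpha) \le w(\beta)$; translating through Lemma~\ref{lem:theta}, every monomial $x^\beta$ with $w(\beta) \le m$ lies strictly below the Newton diagram of each $F_{m+r,n}$, i.e. the box/simplex region $\{\beta : w(\beta) \le m\}$ is contained in $B_{\Delta(F_{m+r,n})}$ (the bounded region cut off by the Newton diagram) for every $r$. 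The point is that all $n$ Newton diagrams dominate the common simplex $\Sigma = \{\beta \in \R_{\ge 0}^n : \beta_1 + 2\beta_2 + \dots + n\beta_n \ge m+1\}$, whose complement $B_\Sigma$ has a clean lattice-point count.

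To finish I would invoke the combinatorial lower bound of Theorem~\ref{main co-mixed}: since each $F_{m+r,n}$ is convenient (it contains $x_\ell^{m+\ell}$-type corners on each axis — more precisely the monomial $x_\ell$ appears with the right weight, so $\Delta(F_{m+r,n})$ meets every axis), we get
\[
\mult_{\mb 0}(F_{m+1,n},\dots,F_{m+n,n}) \ge \Vol^\circ(\Delta(F_{m+1,n}),\dots,\Delta(F_{m+n,n})) \ge \Vol^\circ(\Sigma,\dots,\Sigma) = n!\,\Vol(B_\Sigma),
\]
using the decreasing property (Proposition~\ref{decreasing property}) for the middle inequality. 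It then remains to compute $n!\,\Vol(B_\Sigma)$ where $B_\Sigma = \{x \in \R_{\ge 0}^n : x_1 + 2x_2 + \dots + nx_n \le m+1\}$ — wait, I should be careful: the correct threshold after translating "$w(\beta)\le m$ forces vanishing" is that the diagram sits at level $w = m+1$, and the relevant covolume region is the simplex with vertices $(m+1)\e_1, \tfrac{m+1}{2}\e_2, \dots, \tfrac{m+1}{n}\e_n$, so its normalized volume is $\frac{(m+1)^n}{n!}\cdot\frac{1}{1\cdot 2\cdots n}\cdot n! = \frac{(m+1)^n}{n!}$, which is not $\binom{n+m}{n}$. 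The main obstacle, and where I would spend the real effort, is therefore getting the \emph{exact} bounded region right: one must use that the $k$-th equation $f_{m+k,n}$ vanishes under $\theta^\beta$ for $w(\beta) < m+k$ — i.e. the diagrams are \emph{not} all the same, the $k$-th one is pushed up to level $m+k$ — and then the correct mixed covolume is $\Vol^\circ(\Sigma_{m+1},\dots,\Sigma_{m+n})$ where $\Sigma_j$ is the simplex at level $j$. Computing this mixed covolume by the alternating-sum formula \eqref{alternating sum}, or by a direct lattice-point/generating-function argument counting $\{\beta \in \Z_{\ge 0}^n : w(\beta) \le m\}$ weighted appropriately, should yield exactly $\binom{n+m}{n}$; identifying and proving this combinatorial identity (likely $\binom{n+m}{n} = $ the number of lattice points $\beta$ with $\beta_1 + \dots + \beta_n \le m$, via the change of variables hidden in the weights $1,2,\dots,n$ together with the staircase shape of the diagrams) is the crux of the argument and the step I expect to be delicate.
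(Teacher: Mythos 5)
Your setup coincides with the paper's: the toric-derivative computation $\theta^\beta(f_{d,n})(\mb 1)=\sum_k(-1)^k{d\choose k}k^{w(\beta)}$ with $w(\beta)=\sum_i i\beta_i$, the finite-difference vanishing for $w(\beta)<d$, the resulting description of the Newton diagrams of $F_{m+k,n}$ as sitting at weight level $m+k$ (this is exactly Proposition~\ref{main point}), and the chain $\mult_{\mb 0}\ge\Vol^{\circ}(\Delta(F_{m+1,n}),\dots,\Delta(F_{m+n,n}))\ge\Vol^{\circ}(\Sigma_{m+1},\dots,\Sigma_{m+n})$ via Theorem~\ref{main co-mixed} and Proposition~\ref{decreasing property}. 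You also correctly diagnose that using the single common level $m+1$ only yields $(m+1)^n/n!$ and that one must keep the $n$ distinct levels $m+1,\dots,m+n$.

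The gap is that you stop exactly at the decisive step: you assert that $\Vol^{\circ}(\Sigma_{m+1},\dots,\Sigma_{m+n})$ ``should yield'' ${n+m\choose n}$ and defer it to an alternating-sum or lattice-point argument that you expect to be delicate. In fact no such identity hunt is needed, because the $n$ simplices are all dilates of one fixed simplex: with $\sigma=\operatorname{conv}\{\tfrac{1}{i}\e_{i},\ i=1,\dots,n\}$ one has $\Sigma_{m+k}=(m+k)\,\sigma$ (more precisely, $\Delta(F_{m+k,n})+\R_{\ge0}^{n}\subset (m+k)\sigma+\R_{\ge0}^{n}$). Multilinearity of the mixed covolume then gives
\[
\Vol^{\circ}\bigl((m+1)\sigma,\dots,(m+n)\sigma\bigr)=\Bigl(\prod_{k=1}^{n}(m+k)\Bigr)\,\Vol^{\circ}(\sigma,\dots,\sigma),
\]
and $\Vol^{\circ}(\sigma,\dots,\sigma)=n!\,\Vol(B_{\sigma})=n!\cdot\frac{1}{n!}\cdot\prod_{i=1}^{n}\frac{1}{i}=\frac{1}{n!}$, since $B_{\sigma}$ is the simplex with vertices $\mb 0,\e_1,\tfrac12\e_2,\dots,\tfrac1n\e_n$. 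This yields $\frac{(m+1)\cdots(m+n)}{n!}={n+m\choose n}$ in one line; it is the same computation you already carried out correctly for the uniform-level case, just applied once to $\sigma$ and combined with multilinearity. Without this (or a completed substitute), your argument establishes the reduction but not the stated numerical bound, so as written the proof is incomplete.
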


To prove Theorem~\ref{conj}, we set $f_k(x)=f_{m+k,n}(x)$ and $F_{k}(x)=f_{k}(x+\mb 1)$ for $k=1,\ldots,n$  as in~\eqref{eq:Fk}. We then compute 
the Newton diagram ${\mathcal D}(F_{k})$ of $F_{k}$.
For $\alpha \in \Z_{\geq 0}^{n}$ set
\begin{equation}\label{sum}
|\alpha|'=\sum_{j=1}^{n }j\alpha_{j}
\end{equation}
\begin{proposition}\label{main point}
Let $k=1,\ldots,n$. Then ${\mathcal D}(F_{k})$ is the union of the bounded faces of the convex-hull of the union of the sets $\alpha+\R_{\geq 0}^{n} $
over all $\alpha \in \Z_{\geq 0}^{n}$ such that $$|\alpha|'=\sum_{i=1}^{n }i\alpha_{i} \geq m+k.$$

Moreover, if $\alpha \in {\mathcal D}(F_{k}) \cap \Z_{\geq 0}^{n}$
then the coefficient of $x^{\alpha}$ in $F_{k}(x)$ is equal to
$\frac{1}{\alpha_1 ! \cdots \alpha_n !} \cdot c_{|\alpha|'}$, where $c_{|\alpha|'}$ is the coefficient of $z^{|\alpha|'}$ in the expansion as a series of
$|\alpha|' ! \cdot  (1-e^{z})^{m+k}$.
As a consequence ${\mathcal D}(F_{k})$ intersects the $i$-th coordinate axis at the point with $i$-th coordinate $\lceil \frac{m+k}{i}\rceil$ for $i=1,\ldots,n$ (where $\lceil \cdot \rceil$
stands for upper integer part).
\end{proposition}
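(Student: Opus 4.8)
The plan is to compute directly the Taylor expansion of $F_k(x) = f_{m+k,n}(x+\mb 1)$ at the origin and read off which exponents support nonzero coefficients. First I would substitute $x_j \mapsto x_j + 1$ in the monomial $x_1^\ell x_2^{\ell^2}\cdots x_n^{\ell^n}$ appearing in $f_{m+k,n}$; expanding $(x_j+1)^{\ell^j} = \sum_{\alpha_j} \binom{\ell^j}{\alpha_j} x_j^{\alpha_j}$ and multiplying over $j$ gives that the coefficient of $x^\alpha$ in $F_k$ equals
\begin{equation}\label{eq:Fkcoeff}
[x^\alpha]\,F_k(x) \;=\; \sum_{\ell=0}^{m+k} (-1)^\ell \binom{m+k}{\ell} \prod_{j=1}^{n} \binom{\ell^j}{\alpha_j}.
\end{equation}
The key algebraic observation is that $\prod_{j=1}^n \binom{\ell^j}{\alpha_j}$ is, for fixed $\alpha$, a polynomial in $\ell$ of degree exactly $\sum_{j=1}^n j\alpha_j = |\alpha|'$, with leading coefficient $\prod_j \tfrac{1}{\alpha_j!}$ (since $\binom{\ell^j}{\alpha_j} = \tfrac{1}{\alpha_j!}\ell^{j\alpha_j} + \text{lower order in }\ell$). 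Then I would invoke Lemma~\ref{L:bino}: applying the operator $\sum_{\ell=0}^{m+k}(-1)^\ell\binom{m+k}{\ell}(\cdot)$ to a monomial $\ell^r$ gives the coefficient of $z^r$ in $r!\,(1-e^z)^{m+k}$, which vanishes for $r < m+k$ and is nonzero for $r \ge m+k$.

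Next I would combine these two facts. Writing $\prod_j \binom{\ell^j}{\alpha_j} = \sum_{r=0}^{|\alpha|'} \lambda_r\, \ell^r$ with $\lambda_{|\alpha|'} = \prod_j 1/\alpha_j!$, linearity of the difference operator in \eqref{eq:Fkcoeff} gives
\[
[x^\alpha]\,F_k(x) \;=\; \sum_{r=0}^{|\alpha|'} \lambda_r \cdot c_r,
\]
where $c_r$ is the coefficient of $z^r$ in $r!\,(1-e^z)^{m+k}$. By Lemma~\ref{L:bino}, $c_r = 0$ for all $r < m+k$. Hence if $|\alpha|' < m+k$ the whole sum vanishes, so $\partial^\alpha(F_k)(\mb 0)=0$ for every such $\alpha$; and if $|\alpha|' = m+k$ the sum collapses to the single surviving term $\lambda_{|\alpha|'} c_{|\alpha|'} = \tfrac{1}{\alpha_1!\cdots\alpha_n!}\,c_{m+k}$, which is nonzero. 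This identifies the exponents $\alpha$ with $|\alpha|'=m+k$ as exactly those lying on the ``first'' layer: they are vertices (or at least boundary points) of the Newton diagram, and no lattice point with $|\alpha|' < m+k$ supports a nonzero term. By Lemma~\ref{lem:tor} (or directly Lemma~\ref{lem:theta}), $\mathcal D(F_k)$ is therefore the union of bounded faces of the convex hull of $\bigcup\{\alpha + \R_{\ge 0}^n : |\alpha|' \ge m+k\}$, and for any lattice point $\alpha$ on $\mathcal D(F_k)$ one has $|\alpha|' = m+k$ (minimality along each ray forces equality), giving the stated coefficient formula $\tfrac{1}{\alpha_1!\cdots\alpha_n!}c_{|\alpha|'}$.

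Finally, for the coordinate-axis intersection: the point $\beta\cdot\e_i$ on the $i$-th axis lies in $\mathcal D(F_k)$ iff $\beta$ is the least non-negative integer with $|\beta\e_i|' = i\beta \ge m+k$, i.e. $\beta = \lceil (m+k)/i\rceil$; I should note that when $i\nmid m+k$ this lattice point has $|\beta|' > m+k$, so it is the projection onto the axis of the true diagram and the coefficient is read from the appropriate face, but it is still the unique axis point of $\mathcal D(F_k)$, confirming conveniency. The main obstacle I anticipate is making rigorous the claim that the lattice points with $|\alpha|' = m+k$ together with the decreasing condition genuinely describe the bounded faces of the convex hull — one must check there are no ``hidden'' nonzero coefficients at points with $|\alpha|' > m+k$ that would alter the diagram; but since every such $\alpha$ already sits in $(\alpha'+\R^n_{\ge0})$ for some $\alpha'$ on the level $|\alpha'|'=m+k$ (reduce any coordinate), these points contribute nothing new to the Newton diagram, and the argument goes through.
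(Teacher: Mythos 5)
Your reduction to Lemma~\ref{L:bino} is the right engine, but your route --- expanding $F_k(x)=f_{m+k,n}(x+\mb 1)$ directly and reading off Taylor coefficients --- leaves a genuine gap exactly where your argument becomes vague. Your formula gives $[x^\alpha]F_k=\sum_{r=0}^{|\alpha|'}\lambda_r c_r$ with $c_r=0$ only for $r<m+k$, so it proves the vanishing for $|\alpha|'<m+k$ and the clean value $\frac{1}{\alpha_1!\cdots\alpha_n!}c_{m+k}$ only when $|\alpha|'=m+k$, where the sum collapses to its top term. But the Newton diagram genuinely contains lattice points with $|\alpha|'>m+k$: the minimal elements of $\{\alpha:|\alpha|'\ge m+k\}$ need not satisfy $|\alpha|'=m+k$ (e.g.\ the axis points $\lceil (m+k)/i\rceil\,\e_i$ when $i\nmid m+k$, such as $(0,2)$ for $n=2$, $m+k=3$). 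Your parenthetical claim that ``minimality along each ray forces equality'' is false, and you contradict it yourself two sentences later. At such a point your sum has several a priori nonzero terms $\lambda_r c_r$ with $m+k\le r\le|\alpha|'$, and you have proved neither that it equals $\frac{1}{\alpha!}c_{|\alpha|'}$ nor even that it is nonzero. This is not cosmetic: those minimal points with $|\alpha|'>m+k$ are generators of the upper set, they sit below no point with $|\alpha|'=m+k$, so without their non-vanishing you cannot conclude that the Newton diagram is the one claimed (nor that it meets the $i$-th axis at height $\lceil(m+k)/i\rceil$).

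The paper avoids this entirely by never expanding $F_k$: it computes the toric quantities $L_k(\alpha)=\sum_j c_{kj}\,a_j^{\alpha}=\sum_j(-1)^j\binom{m+k}{j}j^{|\alpha|'}=c_{|\alpha|'}$ (a one-line calculation using $a_j=(j,j^2,\dots,j^n)$), applies Lemma~\ref{L:bino} to get the exact vanishing pattern $L_k(\alpha)=0\iff|\alpha|'<m+k$, and then invokes Lemma~\ref{lem:tor}, which converts this pattern into both the description of $\mathcal D(F_k)$ and the coefficient formula $\frac{L_k(\beta)}{\beta_1!\cdots\beta_n!}$ at every lattice point $\beta$ of the diagram --- including those with $|\beta|'>m+k$ --- via the derivative identity of Lemma~\ref{lem:theta}. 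You cite Lemma~\ref{lem:tor} at the end, but its input is the vanishing of the $L_k(\alpha)$, not of the Taylor coefficients you computed; switching to $L_k$ from the start is precisely the fix your argument needs.
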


\begin{proof}
Let $\alpha \in  {\ZZ}_{\geq 0}^{n}$. Then  $L_k(\alpha)=\sum_{i=j}^{n+m} c_{kj}a_{j}^{\alpha}$ equals
\[L_k(\alpha) = \sum_{j=1}^{n+m} c_{kj} \prod_{i=1}^{n}(j^{i})^{\alpha_{i}}=
\sum_{j=1}^{n+m} c_{kj} j^{|\alpha|'} =\sum_{j=1}^{m+n-k+1} {(-1)}^{j} {m+k \choose j}j^{|\alpha|'}.\]
Using Lemma ~\ref{L:bino}, we obtain that $L(\alpha)=0$ if $|\alpha|' \leq m+k-1$ and $L(\alpha) \neq 0$ if $|\alpha|' \geq m+k$. 
It remains to use Lemma ~\ref{lem:tor}.
\end{proof}

\medskip

\subsection*{Proof of Theorem~\ref{conj}}
\label{Proof of Theorem 1.2}
By Proposition~\ref{main point} the Newton polytope $\Delta_{k}$ of $F_{k}$ is convenient.
Hence, by Theorem~\ref{main co-mixed}, we have
$$\mult_{0}(F_1,\ldots,F_{n}) \geq \Vol^{\circ}(\Delta_{1},\ldots,\Delta_{n}).$$
Let $\sigma$ be the convex hull of the points $\frac{1}{i}e_{i}$ for $i=1,\ldots,n$, where $e_i$ is the $i$-th vector in the standard basis.
By Proposition~\ref{main point}, we have $\Delta_{k } +\R_{\geq 0}^{n} \subset Q_{k}+\R_{\geq 0}^{n}$, where
$Q_{k}=(m+k) \cdot \sigma$ for $k=1,\ldots,n$.
Therefore, by monotonicity of the mixed covolume (Proposition~\ref{decreasing property}), we obtain
$$\Vol^{\circ}(\Delta_{1},\ldots,\Delta_{n}) \geq \Vol^{\circ}(Q_{1},\ldots,Q_{n}).$$
We have $\Vol^{\circ}(Q_{1},\ldots,Q_{n})=(\prod_{k=1}^{n} m+k) \cdot \Vol^{\circ}(\sigma,\ldots,\sigma)$
by multilinearity of the mixed covolume. Moreover, we have $\Vol^{\circ}(\sigma,\ldots,\sigma)= n! \cdot \Vol(B_{\sigma})$. We compute that $\Vol(B_{\sigma})=(\frac{1}{n!})^{2}$
to conclude that $\Vol^{\circ}(Q_{1},\ldots,Q_{n})=(\prod_{k=1}^{n} m+k) \cdot \frac{1}{n!}={n+m \choose n}$.

\medskip

\begin{proposition} \label{prop:n=2}
Consider the system~\eqref{eq:fij}. For $n=2$ and any positive integer number $m$, the multiplicity at $\mb 1$ is equal to ${m+2 \choose 2}$.
\end{proposition}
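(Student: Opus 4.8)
The plan is to prove the matching upper bound $\mult_{\mb 1}(f_{m+1,2},f_{m+2,2})\le{m+2 \choose 2}$, the lower bound being Theorem~\ref{conj}. Write $F_k=f_{m+k,2}(x+\mb 1)$ as in~\eqref{eq:Fk}, a polynomial in $x_1,x_2$ vanishing at $\mb 0$, so that $\mult_{\mb 1}(f_{m+1,2},f_{m+2,2})=\mult_{\mb 0}(F_1,F_2)$. First I would show the system $F_1=F_2=0$ is convenient and \emph{non-degenerate at $\mb 0$}; by Theorem~\ref{main co-mixed} this gives $\mult_{\mb 0}(F_1,F_2)=\Vol^{\circ}(\Delta(F_1),\Delta(F_2))$, and it then remains to bound this mixed covolume.

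\textbf{Newton diagrams.} Proposition~\ref{main point} with $n=2$ identifies $\mathcal D(F_k)$ as the set of minimal lattice points $\alpha$ with $\alpha_1+2\alpha_2\ge m+k$; it meets the $x_1$-axis at $(m+k,0)$ and the $x_2$-axis at $\bigl(0,\lceil(m+k)/2\rceil\bigr)$, so $F_1=F_2=0$ is convenient. On the ``primary edge'' $\{\alpha_1+2\alpha_2=m+k\}$ the coefficient of $x^{\alpha}$ in $F_k$ is $(-1)^{m+k}(m+k)!/(\alpha_1!\,\alpha_2!)\ne 0$. Concretely, if $m+k$ is even $\mathcal D(F_k)$ is the single segment from $(m+k,0)$ to $(0,(m+k)/2)$; if $m+k$ is odd it is that segment up to $(1,(m+k-1)/2)$ followed by the segment from $(1,(m+k-1)/2)$ to $(0,(m+k+1)/2)$, with a nonzero coefficient at each endpoint.

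\textbf{Non-degeneracy — the crux.} For $\emptyset\ne K\subsetneq\{1,2\}$, the restriction of $F_k$ to a coordinate axis has a unique lowest monomial ($x_1^{m+k}$, respectively $x_2^{\lceil(m+k)/2\rceil}$), so every initial form $(F_k|_K)^{\epsilon}$ is a monomial and the associated initial system has no torus solution. For $K=\{1,2\}$ and $\epsilon\in\R_{>0}^{2}$, set $r=\epsilon_2/\epsilon_1$; from the shape of $\mathcal D(F_k)$ the face selected by $\epsilon$ is a single vertex — hence $(F_k)^{\epsilon}$ a monomial — unless $r=2$, or unless $r=1$ and $m+k$ is odd (when $(F_k)^{\epsilon}=c_1x_1x_2^{(m+k-1)/2}+c_2x_2^{(m+k+1)/2}$ with $c_1,c_2\ne 0$). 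Since $m+1$ and $m+2$ have opposite parity, at $r=1$ one of the two initial forms is still a monomial, so the only value of $r$ for which both can fail to be monomials is $r=2$. There $(F_k)^{\epsilon}$ is weighted-homogeneous of degree $m+k$ for the weight $(1,2)$, so its torus zero locus is a union of $(1,2)$-orbits, governed by the invariant $u=x_1^{2}/x_2$: substituting and using the coefficient formula together with standard identities relating $(2k)!$ to $k!$, the equation $(F_k)^{\epsilon}=0$ becomes $\Phi_k(u)=0$, where $\Phi_k$ is, up to a nonzero scalar and the substitution $u\mapsto -u/4$, the Laguerre polynomial $L^{(-1/2)}_{\lfloor(m+k)/2\rfloor}$ if $m+k$ is even and $L^{(1/2)}_{\lfloor(m+k)/2\rfloor}$ if $m+k$ is odd (equivalently, setting $x_2=-1$ produces the physicists' Hermite polynomials $H_{m+1},H_{m+2}$). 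By the contiguous relation $L^{(-1/2)}_n=L^{(1/2)}_n-L^{(1/2)}_{n-1}$, a common root of the polynomials for $k=1$ and $k=2$ would be a common root of two \emph{consecutive} Laguerre polynomials with the fixed parameter $\frac12>-1$; but consecutive members of a family orthogonal on an interval have strictly interlacing real roots, hence none in common. This is the interlacing of roots of two hyperbolic polynomials mentioned in the Introduction, and it shows the initial system at $r=2$ has no torus solution. Hence $F_1=F_2=0$ is non-degenerate at $\mb 0$.

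\textbf{Computing the covolume.} The two points where $\mathcal D(F_k)$ meets the axes lie in $\Delta(F_k)$, so by convexity the segment $P_k$ joining $(m+k,0)$ to $\bigl(0,\lceil(m+k)/2\rceil\bigr)$ satisfies $P_k\subseteq\Delta(F_k)$; the decreasing property (Proposition~\ref{decreasing property}) then gives $\Vol^{\circ}(\Delta(F_1),\Delta(F_2))\le\Vol^{\circ}(P_1,P_2)$. A short computation with formula~\eqref{alternating sum}, treating $m$ even and $m$ odd separately (in both cases $B_{P_1+P_2}$ has the vertices $(0,0)$, $(2m+3,0)$, $(0,m+2)$ and one further vertex with positive coordinates), yields $\Vol^{\circ}(P_1,P_2)=\frac12(m+1)(m+2)={m+2 \choose 2}$. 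Combined with Theorem~\ref{conj} this gives $\mult_{\mb 1}(f_{m+1,2},f_{m+2,2})={m+2 \choose 2}$. The only genuinely delicate step is the coprimality of the two Laguerre (equivalently, consecutive Hermite) polynomials at $r=2$; all the rest is bookkeeping with Proposition~\ref{main point} and the covolume formula.
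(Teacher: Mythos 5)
Your proposal is correct and follows essentially the same route as the paper's proof: compute the Newton diagrams of $F_1,F_2$ via Proposition~\ref{main point}, reduce non-degeneracy to showing that the two initial forms on the weight-$(1,2)$ edge have no common torus zero, pass to the invariant $u=x_1^2/x_2$, and conclude by the interlacing of roots of the resulting Laguerre (equivalently Hermite) polynomials. The only variations are minor and all legitimate: you spell out why the directions $r\ne 2$ are harmless (the paper just asserts that the primary truncations suffice), you bound $\Vol^{\circ}(\Delta(F_1),\Delta(F_2))$ from above by $\Vol^{\circ}(P_1,P_2)$ and invoke Theorem~\ref{conj} for the matching lower bound instead of computing the covolume of the actual diagrams exactly, and your parameter bookkeeping $L^{(-1/2)}_{\cdot}$ versus $L^{(1/2)}_{\cdot}$ together with the contiguous relation $L^{(-1/2)}_n=L^{(1/2)}_n-L^{(1/2)}_{n-1}$ is a correct (indeed self-contained) substitute for the paper's citation of the interlacing result of Driver--Jordaan.
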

\begin{proof}
Assume $n=2$. We treat the case $m$ odd as the case $m$ even is similar. Set $a=\frac{m+1}{2}$. We apply Proposition~\ref{main point}.
We compute that ${\mathcal D}(F_{1})$ is the segment $E_{1}=[(0,\frac{m+1}{2}),(m+1,0)]$ and the truncation of $F_{1}$ to $E_{1}$ is equal, up to a multiplicative constant, to the polynomial
$$\tilde{F}_{1}(x_{1},x_{2})=\sum_{i=0}^{a} c_{i} \cdot x_{1}^{2i} x_{2}^{a-i}, \quad \mbox{where} \quad  c_{i}= \frac{1}{(2i)! \cdot (a-i)!} .$$
Then, we compute that ${\mathcal D}(F_{2})$ is the union of the segments
\[E_{2,1}=[(0,\frac{m+3}{2}), (1,\frac{m+1}{2})]  \text{ and } E_{2,2}=[(1,\frac{m+1}{2}), (m+2,0)].\]
Moreover,
 the truncation of $F_{2}$ to $E_{2,2}$ is equal, up to a multiplicative constant, to 
$$\tilde{F}_{2}(x_{1},x_{2})=x_1 \cdot \sum_{i=0}^a c'_{i} \cdot x_{1}^{2i} x_{2}^{a-i}, \quad \mbox{where} \quad c'_{i}=\frac{1}{(2i+1)! \cdot (a-i)!}.$$
The truncation of $F_{2}$ to the other segment $E_{2,1}$ is a binomial polynomial (we will not need to know its coefficients).
Using Formula~\eqref{alternating sum}, one can check that $\Vol^{\circ}(\Delta_{1},\Delta_{2})={m+2 \choose 2}$, where $\Delta_{k}$ is the Newton polytope of $F_{k}$, see Figure~\ref{F:n=2}.
Thus, by Theorem~\ref{main co-mixed}, in order to prove that the multiplicity at the origin of the system $F_{1}=F_{2}=0$ is ${m+2 \choose 2}$, it suffices to prove that this system
is non-degenerate at the origin. For this it suffices to prove that the system $\tilde{F}_{1}=\tilde{F}_{2}=0$ has no solution in $(\C^{*})^{2}$.
Using the monomial change of coordinate $x=x_{1}^{2}x_{2}^{-1}$, we are reduced to show that the univariate polynomials
$\ell_{1}(x)=\sum_{i=0}^{a} c_{i}x^{i}$ and $\ell_{2}(x)=\sum_{i=0}^{a} c'_{i}x^{i}$ have no common complex root.
It turns out that, up to scalar multiplication and change of variable, these polynomial are known as Laguerre polynomials.
In fact, they are also Hermite polynomials, up to another change of variables and scalar multiplication.
Explicitely, $$\ell_{1}(x)=s_{1} \cdot L_{a}^{\frac{1}{2}}(-\frac{x}{4}) \quad \mbox{and} \quad
\ell_{2}(x)=s_{2} \cdot L_{a}^{-\frac{1}{2}}(-\frac{x}{4}),$$ where $s_{1}$ is the inverse of $\prod_{i=0}^{a-1}(a+\frac{1}{2}-i)$
and $s_{2}$  is the inverse of $\prod_{i=0}^{a-1}(a-\frac{1}{2}-i)$. It is known  by~\cite[Theorem 2.3]{DJ} that the Laguerre polynomials $L_{a}^{\frac{1}{2}}(x)$ and $L_{a}^{-\frac{1}{2}}(x)$
have only real roots (otherwise said, they are hyperbolic polynomials) and that their roots interlace. In particular, they have different roots. We conclude
that the multiplicity at the origin of the system $F_{1}=F_{2}=0$ is ${m+2 \choose 2}$.

\end{proof}

\begin{figure}[ht]
\begin{center}
\includegraphics[scale=0.5]{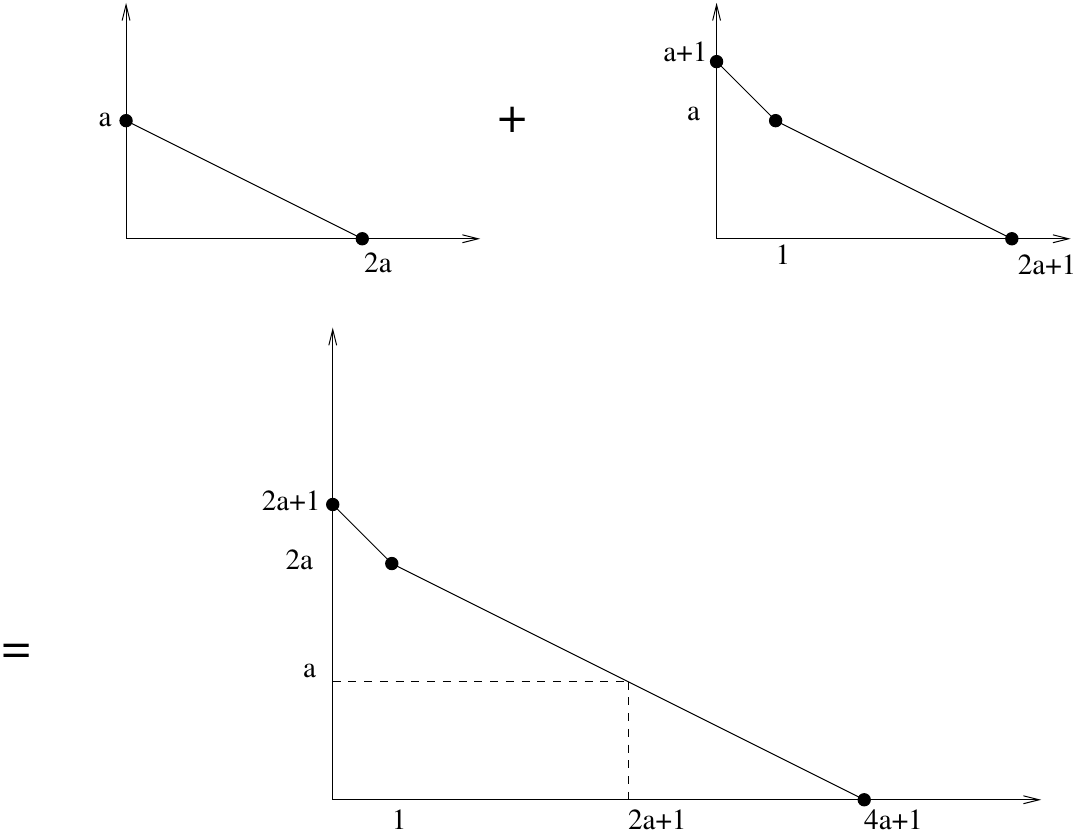}
\caption{The Newton diagrams ${\mathcal D}(F_{1}), {\mathcal D}(F_{2})$ and their Minkowski sum,
where $a= \frac{m+1}{2}$ ($m$ odd). We see that the mixed covolume equals $\Vol^{\circ}(\Delta_{1},\Delta_{2}) = a \cdot (2a+1)={m+2 \choose 2}$.}
\label{F:n=2}
\end{center}
\end{figure}

It is interesting to compare the  value of the maximum multiplicity at one point $m+2 \choose 2$ for $n=2$ and any codimension $m$ with the normalized volume of the cyclic polytope $P_{2,m}$ 
which is the convex hull of the corresponding support set, that bounds the number of isolated solutions.
Using the Online Encyclopedia of Integer Sequences (\url{https://oeis.org/}), we have that ${\rm vol_\Z} (P_{2,m}) = 2\, {m+3 \choose 3}$, 
a polynomial of degree $3$ in $m$. On the other side ${m+2 \choose 2}$ is only
a polynomial of degree $2$ in $m$, so the difference tends to infinity with $m$.

\begin{remark}
\label{rem:Gale duals}
We showed in Section~\ref{S:H} that $H$-duality associates to a polynomial system $C.(x+\mb 1)^A=0$ vanishing at $\mb 0 \in \C^n$ a polynomial system $B^{t}.(y+\mb 1)^{D^{t}}=0$ vanishing at $\mb 0 \in \C^m$, where
$B$ and $D$ are matrices (with $D$ reduced) which are Gale dual to $A$ and $C$ respectively. Proposition~\ref{Gale duals} reveals that
 $H$-duality just permutes the dimension and the codimension in the family of systems~\eqref{Jens} with high multiplicity (up to left equivalence for matrices).
\end{remark}

We end with a suggestion for future work.

\begin{remark}
We expect that the local multiplicity at $\mb 1$ of the system~\eqref{eq:fij} is equal to ${n+m \choose m}$ for any positive integers numbers $n,m$.
When $m=2$ a proof would follow closely the proof of Proposition~\ref{prop:n=2} using Theorem~\ref{th:dual}, Proposition~\ref{Gale duals}, see also Remark~\ref{rem:Gale duals}
(the Newton diagrams are the same, only the coefficients are slightly different). 
\end{remark}

\section{The codimension $1$ case} \label{sec:circuit}

In this section we give affirmative answers to Questions~\ref{construction} and~\ref{q:Mnm} in the case of codimension $1$ in Corollary~\ref{cor:M11} of Theorem~\ref{sharp bound for circuit} and Proposition~\ref{perturbation-circuit}. We also prove the interesting Theorem~\ref{pseudo Vandermonde}.

\begin{theorem}\label{sharp bound for circuit}
The maximal multiplicity at $\mb 1$  of a polynomial system with codimension $m=1$
and dimension $n$ is equal to $n+1$.
\end{theorem}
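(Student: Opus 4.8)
The plan is to prove the two inequalities separately. For the lower bound $M(n,1)\geq n+1$, I would simply invoke the explicit system of Theorem~\ref{conj} in the case $m=1$: the system $f_{2,n}(x)=\dots=f_{n+1,n}(x)=0$ has a root at $\mb 1$ of multiplicity at least $\binom{n+1}{n}=n+1$. (Alternatively, since $m=1$, Proposition~\ref{main point} shows the Newton diagrams $\mathcal D(F_k)$ are explicit and the mixed covolume computation is straightforward.) So the substance of the theorem is the upper bound $\mu\leq n+1$ for \emph{every} system with codimension $1$, including the degenerate cases not covered by Theorem~\ref{T:t_{i}} or Corollary~\ref{C:t_{i}bis}.

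For the upper bound I would pass to the Gale dual side, which is one-dimensional when $m=1$. By Theorem~\ref{Gale duality for one solution}, the multiplicity $\mu$ of the isolated solution $\mb 1$ of $C\cdot x^A=0$ equals the multiplicity at $\mb 0\in\C^1$ of the Gale system $g_1(y)=0$, a single univariate analytic function (or, after clearing denominators, a univariate polynomial) vanishing at the origin. The key point is that a univariate function $g_1$ is automatically convenient (its Newton diagram is a single point on the axis) and automatically non-degenerate at $0$, so Theorem~\ref{main co-mixed} applies and $\mu$ equals the $x$-coordinate of the vertex of $\mathcal D(g_1)$, i.e. the order of vanishing of $g_1$ at $0$. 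By Lemma~\ref{lem:axis*} (or directly by Proposition~\ref{diagram}), this order is at most $\rho^*_{1,1}\leq \min(s_1^*,t_1^*)$. Since $B\in\Z^{(n+2)\times 1}$ is a column vector whose entries sum to zero and whose span is the kernel of the rank-$(n+1)$ matrix $A$, we have $s_1^*=\#\{i:b_{i1}\neq 0\}-1\leq (n+2)-1=n+1$. Hence $\mu\leq n+1$.

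The one subtlety to handle carefully is that Lemma~\ref{lem:axis*} and the bound via $\rho^*_{1,1}$ presume a \emph{reduced} Gale dual matrix $D$ and the setup of Section~\ref{sec:ubound}; I would spell out that $B$ being Gale dual to $A$ has no zero row (else some $a_i$ coincides with $a_0=0$ up to the kernel relation, contradicting that $A$ has distinct columns / full rank after the standard normalization), and that at least two entries of $B$ are nonzero, so $s_1^*\geq 1$ — this is why the bound is $n+1$ and not smaller. I do not expect a real obstacle here: the argument is essentially that a nonzero univariate power series has a well-defined finite order of vanishing, bounded by the number of monomials minus one in the underlying sparse structure, which is exactly the classical H\'ajos bound recalled in the Introduction, transported through Gale duality. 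The main thing to get right is matching the combinatorial quantity $\rho^*_{1,1}+1$ to the number of nonzero Gale coefficients and confirming it never exceeds $n+2$.
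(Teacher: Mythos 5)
Your proposal is correct and follows essentially the same route as the paper: the lower bound via Theorem~\ref{conj}, and the upper bound by passing to the univariate Gale dual system, observing it is automatically convenient and non-degenerate, and bounding the order of vanishing by the number of nonzero Gale coefficients minus one (the paper packages this last step as Corollary~\ref{C:t_{i}*} with $m=1$, which is exactly your $s_1^*\leq n+1$ estimate).
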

\begin{proof}
When $m=1$ the Gale system consists of one polynomial equation $g_{1}=0$ in one variable. Moreover, since $D$ has rank $2$, the polynomial $g_{1}$
 is not the zero polynomial. It follows that the Gale system is convenient and non-degenerate at $\mb 0$ and thus the multiplicity at $\mb 0$ is at most $n+1$ by
Corollary~\ref{C:t_{i}*}. By Theorem~\ref{Gale duality for one solution}, we get that the multiplicity of an isolated solution of a polynomial system with codimension $m=1$
and dimension $n$ is at most $n+1$. This bound is sharp by Theorem~\ref{conj}.
\end{proof}

\begin{corollary}\label{cor:M11}
$M(n,1) = M(1,n) = n+1$ for any $n \in \N$.
\end{corollary}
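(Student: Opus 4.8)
The plan is to combine Theorem~\ref{sharp bound for circuit} with the classical univariate bound recalled in the introduction, so that almost nothing new needs to be done. First I would read off $M(n,1)$: Theorem~\ref{sharp bound for circuit} asserts precisely that the maximal multiplicity at $\mb 1$ of a polynomial system of dimension $n$ and codimension $1$ equals $n+1$, and since by the reduction explained after~\eqref{eq:fisC} any isolated solution in the torus can be moved to $\mb 1$ without changing the local intersection multiplicity, this already gives $M(n,1)=n+1$.

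Next I would compute $M(1,n)$, which by definition concerns a \emph{single} Laurent polynomial $f$ in one variable whose support $A\subset\Z$ has cardinality $N=1+n+1=n+2$ and affine dimension $1$. By the H\'ajos bound recalled in the introduction (see~\cite{H} and~\cite[Lemma~3.9]{KS}), the multiplicity of any root of such an $f$ is at most $N-1=n+1$, hence $M(1,n)\le n+1$. For the matching lower bound I would exhibit $f(x)=(x-1)^{n+1}$: its support is $\{0,1,\dots,n+1\}$, which has $n+2$ elements and convex hull of full dimension $1$, so it has codimension $n$; and $f$ has multiplicity $n+1$ at the point $1\in\C^{*}$. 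Therefore $M(1,n)=n+1$.

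Combining the two computations yields $M(n,1)=M(1,n)=n+1$, which in particular confirms the equality~\eqref{eq:M} of Question~\ref{Mnm} in the codimension-$1$ (equivalently, dimension-$1$) case. I do not expect any genuine obstacle here: the only points worth a sentence are checking that $(x-1)^{n+1}$ realizes exactly the prescribed codimension and that the stated upper bound for $M(1,n)$ is nothing but the classical univariate statement, both of which are immediate.
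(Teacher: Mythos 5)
Your proposal is correct and follows the paper's own argument: Theorem~\ref{sharp bound for circuit} gives $M(n,1)=n+1$, and $M(1,n)=n+1$ follows from the univariate bound $N-1$ recalled in the Introduction together with the example $(x-1)^{n+1}$, exactly as the paper does. No gaps.
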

\begin{proof}
Theorem~\ref{sharp bound for circuit} says that $M(n,1) = n+1$. On the other side,
in the univariate case, any complex root of non-zero polynomial is isolated and the polynomial is convenient and non-degenerate at any complex root.  For any codimension $n$, the support $\A$ has cardinality $N=n+2$ and thus, as we mentioned at the beginning of the Introduction, the maximal multiplicity of a non-zero complex root equals $n+2-1=n+1$.
\end{proof}

We now characterize polynomial systems reaching the bound $n+1$ in Theorem~\ref{sharp bound for circuit}.
Let $\gamma: \C \rightarrow \C^{n}, t \mapsto \gamma(t)=(t,t^{2},\ldots,t^{n})$. A cyclic point configuration
is the image of a finite set by $\gamma$ and a cyclic polytope  is the convex hull of a cyclic point configuration in $\R^n$.
We refer to~\cite[Section 0]{Z} for the main combinatorial properties of these polytopes.

\begin{theorem}\label{pseudo Vandermonde}
Let $\A=\{a_{0},a_{1},\ldots,a_{n+1} \} \subset \R^{n}$ be a set of codimension $1$ and dimension $n$.
Let $C$ be a matrix of size $n \times (n+2)$ of maximal rank $n$. Then the system $C \cdot x^A=0$ has $\mb 1$ as a solution of multiplicity $n+1$
if and only if the kernel of $C$ contains the vector with all $1$'s coordinates and a vector with distinct 
coordinates $d_{0},\ldots,d_{n+1}$ such that $\A$ is equal to the cyclic point configuration $\{\gamma(d_{0}),\ldots,\gamma(d_{n+1})\}$ up to an affine transformation of $\R^{n}$.
\end{theorem}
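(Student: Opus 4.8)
The plan is to pass to the Gale dual picture, where the codimension‑one hypothesis collapses everything to a single univariate equation. First I would fix a $\Z$‑Gale dual vector $B=(b_{0},\dots,b_{n+1})^{t}$ of $A$: it spans the one‑dimensional kernel of the matrix $\widehat A$ of~\eqref{eq:A} (first row $\mb 1$, remaining rows the coordinates of the $a_{i}$), so the row space of $\widehat A$ equals $B^{\perp}$. Then fix a reduced Gale dual matrix $D=(\mb 1\mid\delta)$ of $C$, with $\delta=(\delta_{0},\dots,\delta_{n+1})$ and $\delta_{0}=0$; this exists because $\mb 1$ is a solution, and $\mb 1,\delta$ form a basis of $\ker C$. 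Since $m=1$ the Gale system is a single univariate polynomial $g_{1}$, and $L^{*}_{1}(k)=\sum_{i}b_{i}\delta_{i}^{\,k}$. By Proposition~\ref{diagram}, the Newton diagram of $g_{1}$ is the single point $\{\mu_{0}\}$ with $\mu_{0}=\min\{k\ge 0:L^{*}_{1}(k)\neq 0\}$, and this point is a vertex, so $g_{1}$ vanishes at the origin to order exactly $\mu_{0}$; by Theorem~\ref{Gale duality for one solution} this is the multiplicity of $C\cdot x^{A}=0$ at $\mb 1$. As $L^{*}_{1}(0)=\sum_{i}b_{i}=0$ automatically, I obtain the criterion: the multiplicity at $\mb 1$ is $n+1$ if and only if $L^{*}_{1}(k)=0$ for $k=0,\dots,n$ and $L^{*}_{1}(n+1)\neq 0$. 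Writing $W_{\delta}$ for the $(n+1)\times(n+2)$ matrix with rows $(\delta_{i}^{\,k})_{i}$, $k=0,\dots,n$, the vanishing part says exactly $W_{\delta}B=0$.

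For the forward implication I would argue as follows. Let $\delta$ take $r$ distinct values $w_{1},\dots,w_{r}$, set $I_{j}=\{i:\delta_{i}=w_{j}\}$ and $\beta_{j}=\sum_{i\in I_{j}}b_{i}$. The equations $L^{*}_{1}(k)=0$ for $k=0,\dots,n$ say that $(\beta_{1},\dots,\beta_{r})$ lies in the kernel of the $(n+1)\times r$ Vandermonde matrix $(w_{j}^{\,k})$. If $r\le n+1$ this matrix has full column rank, forcing all $\beta_{j}=0$, hence $L^{*}_{1}(n+1)=\sum_{j}\beta_{j}w_{j}^{\,n+1}=0$, contradicting $L^{*}_{1}(n+1)\neq 0$. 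Therefore $r=n+2$, i.e.\ the $\delta_{i}$ are pairwise distinct, so $W_{\delta}$ has rank $n+1$ and its row space is also $B^{\perp}$. Thus $W_{\delta}$ and $\widehat A$ have the same row space; writing $\widehat A=PW_{\delta}$ with $P$ invertible and comparing first rows (both equal to $\mb 1$, while $W_{\delta}$ has full row rank) shows the first row of $P$ is $(1,0,\dots,0)$, so $P$ is block lower‑triangular with top‑left entry $1$ and invertible $n\times n$ bottom‑right block $M$. Reading off the remaining rows gives $a_{i}=M\gamma(\delta_{i})+v$ for all $i$, i.e.\ $A$ equals the cyclic configuration $\{\gamma(\delta_{0}),\dots,\gamma(\delta_{n+1})\}$ up to an affine transformation of $\R^{n}$. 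Since $\delta$ is a column of $D$ it lies in $\ker C$, and its coordinates are distinct; taking $d=\delta$ this is exactly the asserted condition.

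For the converse, suppose $\ker C$ contains $\mb 1$ and a vector $d$ with distinct coordinates such that $A$ is $\{\gamma(d_{0}),\dots,\gamma(d_{n+1})\}$ up to an affine transformation. Subtracting $d_{0}\mb 1$ I may assume $d_{0}=0$, since $t\mapsto t+d_{0}$ induces an affine automorphism of $\R^{n}$ carrying the moment curve to itself, so $\gamma(d)$ and $\gamma(d-d_{0}\mb 1)$ are affinely equivalent; then $D=(\mb 1\mid d)$ is a reduced Gale dual matrix of $C$, and $\mb 1$ is a solution because ${\mb 1}^{A}=\mb 1\in\ker C$. Now $A\cong\gamma(d)$ forces $\widehat A$ and $W_{d}$ to have equal row spaces, hence equal one‑dimensional kernels, so $W_{d}B=0$, i.e.\ $L^{*}_{1}(k)=0$ for $k=0,\dots,n$; and $L^{*}_{1}(n+1)\neq 0$ because the $(n+2)\times(n+2)$ Vandermonde in the distinct nodes $d_{i}$ is invertible, so $(d_{i}^{\,n+1})_{i}$ does not lie in the row space $B^{\perp}$ of $W_{d}$. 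By the criterion of the first paragraph the multiplicity at $\mb 1$ is $n+1$. The only real obstacle is the forward direction — ruling out any coincidence among the Gale nodes $\delta_{i}$ — and what makes it go through is that the single nonvanishing requirement $L^{*}_{1}(n+1)\neq 0$, together with the $n+1$ vanishing ones, converts a rectangular Vandermonde into a square one on the collapsed coefficients $\beta_{j}$ and thereby forces the maximal number $n+2$ of distinct nodes; once that is known, the rest is row‑space bookkeeping.
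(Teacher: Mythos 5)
Your proof is correct and follows essentially the same route as the paper's: pass to the univariate Gale dual equation, read off the order of vanishing from the quantities $L^{*}_{1}(k)$ via Proposition~\ref{diagram} and Theorem~\ref{Gale duality for one solution}, force distinctness of the Gale nodes by a Vandermonde rank argument, and translate the resulting coincidence of row spaces into an affine equivalence with a cyclic configuration. The only differences are cosmetic: you inline the Vandermonde argument (collapsing coefficients over repeated nodes) where the paper cites Theorem~\ref{T:t_{i}*} to get distinctness of the $d_i$, and your converse is spelled out more explicitly than the paper's ``by reversing the previous arguments.''
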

\begin{proof}
Assume that the system $C \cdot x^A=0$ has $\mb 1$ as a solution of multiplicity $\mu=n+1$.
Then the kernel of $C$ contains the vector with all $1$'s coordinates and another vector with coordinates $(d_{0},d_{1},\ldots,d_{n+1})$ such that $d_{0}=0$.
This gives a reduced Gale dual matrix $D$ for $C$ with row vectors $(1,0), (1,d_{1}), \ldots,(1,d_{n+1})$.  Let $B$ be a Gale dual matrix of $A$.
Since $\A$ has codimension $1$, $B$ is a non-zero column matrix with coefficients $\lambda_{0},\lambda_{1},\ldots,\lambda_{n+1}$.
The associated Gale system is one non-zero polynomial equation in one variable $g_{1}=0$. Thus the Gale system is convenient and non-degenerate. Moreover it has 
 $\mb 0$ as a root of multiplicity  $n+1$ by Theorem~\ref{Gale duality for one solution}. Using Theorem~\ref{T:t_{i}*} we get that $d_{0},d_{1},\dots,d_{n+1}$ are distinct and $\lambda_{0},\lambda_{1},\ldots,\lambda_{n+1}$ are all non-zero. It follows then from Proposition~\ref{diagram} that 
\begin{equation}\label{caract}
\sum_{j=1}^{n+1} \lambda_{j}d_j^{i}=0, \; i=1,\ldots,n.
\end{equation}
Thus the matrix $A$ and the matrix $(d_{j}^i)_{0 \leq i \leq n, 0 \leq j \leq n+1}$ have the same Gale dual matrix $B$. Equivalently both matrices are equal up to left multiplication by an invertible matrix,
in other terms, $\A$ and the cyclic configuration $\{\gamma(d_{0}),\ldots,\gamma(d_{n+1})\}$ are equal up to affine transformation.

Conversely, assume that $\A$ is equal up to affine transformation to a
cyclic configuration $\{\gamma(d_{0}),\ldots,\gamma(d_{n+1})\}$, where all $d_{i}$ are distinct.
Then $\A$ is equal up to affine tranformation to the cylic configuration $\{\gamma(0),\gamma(d_1-d_{0}),\ldots,\gamma(d_{n+1}-d_{0})\}$ since both cyclic configurations share the same affine relations.
Then taking for $C$ the transpose of any Gale dual matrix of the matrix with a first row the all $1$'s vector and as a second row 
the vector $(0,d_1-d_{0},\ldots,d_{n+1}-d_{0})$ we get a system having $\mb 1$ as a solution of multiplicity $n+1$, by reversing the previous arguments.
\end{proof}
\begin{example}
\label{ex:square}
Assume that $\A=\{(0,0),(1,0),(1,1), (0,1)\}$. Then it is easy to show that $\A$ is not the image by an affine transformation of a cyclic point configuration $\{\gamma(d_{0}),\ldots,\gamma(d_{3})\}$, where all $d_{i}$ are distinct. Thus, the maximal multiplicity of a system $C \cdot x^{A}$ at a point 
$q \in (\C^{*})^{2}$ is at most $2$. In fact this maximal multiplicity is equal to $2$, see Example~\ref{ex:squarefirst}.
\end{example}

System ~\eqref{eqn:poly} has real coefficients, and it lead us to pose Question~\ref{construction}.
To our knowledge, a positive answer to this question would give a record number of positive solutions for any values of $m,n$ such that $m \geq 3$ and $n \geq 2$.
The answer is trivially yes for $n=1$.

When $m=1$ and $n$ is any positive integer, one tool used to get systems reaching the upper bound $n+1$ for the multiplicity is the theory of ''real dessins d'enfant''.
We are going to show below how to use real dessins d'enfant to affirmatively answer Question~\ref{construction} in the case $m=1$
(and any $n$). We recall some needed facts about real dessins d'enfant, a more complete description can be found in \cite{B} and the references therein.

Consider any polynomial map $\Phi =(P,Q) : {\C}P^1 \rightarrow {\C}P^1$,
where $P,Q$ are homogeneous polynomials of the same degree $d$. It is convenient to write $\Phi(x)=(\varphi(x):1)$ with $\varphi(x)=\frac{P(x)}{Q(x)}$ when $Q(x) \neq 0$.
Assume that $P$ and $Q$ have real coefficients. Then $\Phi$ is a real map, that is, we have $\overline{\Phi(x)}=\Phi(\bar{x})$ where the bar symbol denote the (coordinatewise) complex conjugation of $\C P^{1}$. 
The {\it real dessin d'enfant} associated to $\Phi$ is $\Gamma=\Phi^{-1}({\R}P^1)$. This is a graph on $\C P^{1}$ with vertices the critical points $x$ of $\Phi$ such that $\Phi(x) \in \R P^{1}$ (real critical values)
and edges
which form the pre-image of segments between consecutive two critical values in $\R P^{1}$. The inverse image of $(0:1)$ (resp., $(1:0)$, $(1:1)$) is the set of roots of $P$ (resp., $Q$, $g=P-Q$).
We will denote all roots of $P$ (resp., $Q$, $g=P-Q$) by the same letter $p$ (resp., $q$, $r$). The graph $\Gamma$ equipped with letters $p$, $q$, $r$ has the following properties.
It is invariant under complex conjugation. The restriction of $\Phi$ to any connected component of ${\C}P^1\setminus \Gamma$
is a covering of one connected component of $\C P^1 \setminus \R P^1$. The degree of this covering is given by the number of letters $p$ (resp., $q$, $r$) in the boundary and these letters
$p$, $q$, $r$ are arranged consistently with the arrangement of their images $(0:1)$ (resp., $(1:0)$, $(1:1)$ on $\R P^{1}$ (for instance, an open arc joigning letters $p$ and $q$ cannot contain the letter $r$ and
no other letters $p$, $q$). It follows that the degree of $\Phi$ is half the sum of all these local degrees. Moreover, the valency of a vertex of $\Gamma$ is an even integer number, in fact this is twice the multiplicity
of this vertex  as a critical point of $\Phi$. 

Conversely, any graph on $\C P^1$ marked with letters $p$, $q$, $r$ which satisfies the previous properties
is the real dessin d'enfant associated to a real polynomial map $\Phi: {\C}P^1 \rightarrow {\C}P^1$ by Riemann uniformization Theorem (see \cite{B} and the references therein for more details) whose degree
can be computed as described before.

Let $\A \subset \Z^{n}$ be a set of dimension $n$ codimension $m$. Consider a polynomial system $C \cdot X^{A}=0$ with support $\A$ and real coefficient matrix $C$.
The corresponding system~\eqref{rational} has the form $\prod_{i=0}p_{i}(y)^{\lambda_i}=1$ where all $p_{i}$ are real degree one polynomials given by the row of a matrix $D$ which is Gale dual to $C$
and the exposants $\lambda_{i}$ are integer coefficients in any non trivial affine relation on $\A$.
So the solutions of the Gale system are the pre-image of $1$ under this rational function, that is, the marked points $r$ on the corresponding real dessin d'enfant.
Note that all marked points $p$ and $q$ belong to $\R P^{1}$ since the numerator and denominator of the rational functions are product of real polynomials of degree one.
Conversely, any map $\Phi$ with this property can be seen as a rational map coming, via Gale duality, from a real polynomial system of dimension $n$ and codimension $1$. 
Given a real linear transformation $\tau$ of the source space $\C P^{1}$, one can consider the image of $\Gamma$ by $\tau$ and 
mark by the same letter $p,q$ or $r$ the image of any marked point of $\Gamma$. Choosing another Gale dual matrix $D$ for $C$ 
is the same as applying such a transformation, so we see a real dessin d'enfant up to this group action to  get rid off the dependence 
on the choice of $D$.

\begin{proposition}
\label{perturbation-circuit} Let $\A \subset \Z^{n}$ be a set of dimension $n$ and codimension $1$, and let $C \in \R^{n \times (n+2)}$ be a matrix
such that the corresponding polynomial system $C \cdot X^{A}=0$ has a solution at $\mb 1$ of multiplicity $n+1$. Then there exists a small perturbation
$\tilde{C}  \in \R^{n \times (n+2)}$ of $C$ such that the polynomial system 
$\tilde{C} \cdot X^{A}=0$ has $n+1$ positive solutions.
\end{proposition}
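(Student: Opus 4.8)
Since $m=1$, the cleanest route is through the Gale dual picture of Section~\ref{sec:Gale}, where the whole system becomes a single univariate equation, and then to \emph{unfold} the root of multiplicity $n+1$ into $n+1$ simple \emph{real} roots that are small enough to lie in the positivity chamber. First I would fix a Gale dual column $B=(\lambda_0,\dots,\lambda_{n+1})^t$ of $A$ and the reduced Gale dual matrix $D$ of $C$, with rows $(1,\delta_i)$ and $\delta_0=0$; by Theorem~\ref{pseudo Vandermonde} the coordinates $\delta_1,\dots,\delta_{n+1}$ are pairwise distinct and all $\lambda_i\neq 0$. The Gale system is the single equation $\varphi(y):=\prod_{i=0}^{n+1}(1+\delta_i y)^{\lambda_i}=1$, and by Theorem~\ref{Gale duality for one solution} its solution $y=0$ has multiplicity $n+1$. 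Under the Gale bijection a real $y$ with $1+\delta_i y>0$ for all $i$ corresponds to a positive solution $X\in\R_{>0}^n$ of $C\cdot X^A=0$ (recover $X$ from $X^{a_i}=1+\delta_i y$ using that the matrix $A$ has full rank $n+1$), and distinct such $y$ give distinct $X$. Hence it suffices to perturb $C$ --- equivalently, to perturb $(\delta_1,\dots,\delta_{n+1})$ keeping $\delta_0=0$ (this keeps $D$ reduced and moves $C$ slightly) --- so that $\varphi(y)=1$ acquires $n+1$ distinct real roots with $|y|$ small.

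\textbf{Linearising the constraints.} Writing $\log\varphi(y)=\sum_{k\ge1}c_k(\delta)\,y^k$, one has $c_k(\delta)=\tfrac{(-1)^{k-1}}{k}\sum_{i=1}^{n+1}\lambda_i\delta_i^{k}=\tfrac{(-1)^{k-1}}{k}L^*_1(k)$, so the multiplicity hypothesis, via Proposition~\ref{diagram} (this is exactly~\eqref{caract} together with $L^*_1(n+1)\neq0$), says precisely that $c_1(\delta^0)=\dots=c_n(\delta^0)=0$ while $c_{n+1}(\delta^0)\neq0$, where $\delta^0$ denotes the original data. Because $\delta_1,\dots,\delta_{n+1}$ are distinct and the $\lambda_i$ nonzero, the Jacobian $\bigl(\partial c_k/\partial\delta_j\bigr)_{1\le k\le n,\,1\le j\le n+1}=\bigl((-1)^{k-1}\lambda_j\delta_j^{\,k-1}\bigr)$ is a Vandermonde matrix with nonzero column scalings, hence of full rank $n$; thus $\delta\mapsto(c_1,\dots,c_n)$ is a submersion at $\delta^0$, every target close to $0$ is realised by some $\delta$ close to $\delta^0$, and $c_{n+1}$ stays close to $c_{n+1}(\delta^0)\neq0$.

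\textbf{Steering the roots real.} Now rescale $y=\varepsilon u$. Prescribing $c_k=\varepsilon^{\,n+1-k}\gamma_k$ for $k=1,\dots,n$ (possible for $\varepsilon$ small by the submersion) gives
\[
\varepsilon^{-(n+1)}\log\varphi(\varepsilon u)=u\bigl(c_{n+1}u^{n}+\gamma_n u^{n-1}+\dots+\gamma_1\bigr)+O(\varepsilon).
\]
I would choose $\gamma_1,\dots,\gamma_n$ so that $c_{n+1}(\delta^0)u^{n}+\gamma_n u^{n-1}+\dots+\gamma_1$ has $n$ distinct nonzero real roots (just prescribe them), so that the polynomial $u(\cdots)$ has $n+1$ distinct real roots, including $u=0$. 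By continuity of roots, for $\varepsilon$ small the left-hand side then vanishes at $n+1$ distinct real points, i.e. $\varphi(y)=1$ has $n+1$ distinct real roots with $|y|=O(\varepsilon)$; since $1+\delta_i y\to 1$ as $\varepsilon\to0$ these all lie in the positivity chamber $\{1+\delta_i y>0\ \forall i\}$, hence produce $n+1$ distinct positive solutions of $\tilde C\cdot X^A=0$, where $\tilde C$ is the corresponding perturbation of $C$ and $\tilde C\to C$ as $\varepsilon\to0$.

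\textbf{Main obstacle.} The delicate point is exactly that generically unfolding a root of order $n+1$ produces complex roots, not real ones; the way out is that one has precisely $n$ free real parameters $c_1,\dots,c_n$ (the submersion of Step 2) and, after the rescaling, these become the full list of coefficients of an otherwise arbitrary degree-$n$ factor, which can be chosen real-rooted. A minor secondary check is the compatibility of the Gale correspondence with positivity and the bookkeeping when $\Z A\neq\Z^n$, both of which are routine given Theorem~\ref{Gale duality for one solution}. Finally, I note the same deformation can be read on the real dessin d'enfant of $\varphi$: the valency-$2(n+1)$ vertex over $(1:1)$ at $y=0$ is split, along a short real arc, into $n+1$ simple $r$-points separated by $n$ simple critical points with critical values in $(0,\infty)$ avoiding $1$, and the deformed marked graph is realised by a nearby real rational map by uniformization --- but I would keep the direct analytic argument above as the primary proof since it avoids invoking the realisation theorem.
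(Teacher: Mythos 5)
Your proposal is correct, but it follows a genuinely different route from the paper. The paper also passes to the Gale picture, where the system becomes the single equation $\varphi(y)=\prod p_i(y)^{\lambda_i}=1$ with a critical point of multiplicity $n+1$ at $y=0$, but from there it argues topologically: it forms the real dessin d'enfant $\Gamma=\Phi^{-1}(\R P^1)$, notes that $y=0$ is a valency-$2(n+1)$ vertex marked $r$ inside the positivity interval $I$, performs the local combinatorial deformation (``splitting'') that replaces this vertex by $n+1$ simple $r$-points in $I$, and invokes the Riemann uniformization theorem to realize the deformed marked graph by a nearby real rational map, hence by a nearby real system with the same support. Your argument replaces the realization theorem by a direct analytic unfolding: the identification $c_k=\frac{(-1)^{k-1}}{k}L^*_1(k)$ turns the multiplicity hypothesis into $c_1=\cdots=c_n=0$, $c_{n+1}\neq 0$; the Vandermonde-type Jacobian (nonzero thanks to the distinctness of the $\delta_i$ and nonvanishing of the $\lambda_i$ from Theorem~\ref{pseudo Vandermonde}) makes $\delta\mapsto(c_1,\dots,c_n)$ a submersion; and the rescaling $y=\varepsilon u$ with $c_k=\varepsilon^{n+1-k}\gamma_k$ reduces the problem to choosing a real-rooted degree-$n$ factor, after which Hurwitz plus the conjugate-pair argument keeps the $n+1$ simple roots real and inside the positivity chamber. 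What your approach buys is self-containedness (no appeal to the realization of dessins) at the price of more computation; what the paper's approach buys is a picture that generalizes to other deformations of the dessin. Two points you gloss over but which are routine: (i) passing from the perturbed reduced matrix $\tilde D$ back to a nearby coefficient matrix $\tilde C$ with $\tilde C\tilde D=0$ (e.g.\ project the rows of $C$ onto the annihilator of the columns of $\tilde D$), and (ii) the real-exponent version of the Gale correspondence on the positive chamber, which is exactly the statement of \cite{BS} that the paper itself invokes.
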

\begin{proof}
Choose any non trivial affine relation with integer coefficients $\lambda_{i}$ on $\A$.
Choose any reduced Gale matrix $D$ of $C$ and consider the rational function $\varphi=\varphi_{1}$ given by~\eqref{rational} (here $m=1$).
Recall that $p_i(y)=1+d_{i} y$ for $i=0,\ldots,n+1$, where the $(1,d_{i})$ are the rows of $D$. Then, by Theorem~\ref{Gale duality for one solution}
we have that $y=0$ is a critical point of multiplicity $n+1$ of $\varphi$ with critical value $1$, which corresponds to the 
solution $(1,\ldots,1)$ of the system  $C \cdot X^{A}=0$. Let $\Gamma$ be the associated real dessin d'enfant. The critical point $y=0$ 
is a marked point of $\Gamma$, with a letter $r$, and it is a vertex of $\Gamma$ with valency $2(n+1)$. By Gale duality Theorem for positive solutions
 (see \cite{BS}), there is a bijection (the restriction of the map $\psi$ defined in the proof of Theorem~\ref{Gale duality for one solution})
between the set of positive solutions of the polynomial system $C \cdot X^{A}=0$ and the set of marked points with letters $r$ contained 
in the interval $I =\{y \in \R \, , \, p_i(y)>0 \; \mbox{for all $i$}\}$. Since $y=0$ is the image of $x=(1,\ldots,1)$ by the Gale map, we see that $I$
is the open interval in $\R P^1$ containing the marked point $y=0$ and no marked points with letters $p$ and $q$. We might deform slightly 
$\Gamma$ in a neighbourhood $U$ of the point $y=0$ in order to get a real dessin d'enfant $\Gamma$ with $n+1$ marked points with letter 
$r$ in the interval $I$ (and $n$ additional vertices of valency $2$ on $\R P^{1}$ in $I$, one between two consecutive letters $r$). This is the
 local deformation of real dessins d'enfant called \emph{splitting} in \cite{B} (see Figure~\ref{Splitting} which corresponds to the case $n=2$).
 Then, the resulting real dessin d'enfant has $n+1$ 
 marked point with letter $r$ in $I$, and the marked points with letters $p$ and $q$ still belong to $\R P^{1}$ and have the same valencies. 
 It follows that the resulting real dessin d'enfant corresponds to a real polynomial system with same support (since the affine relation is preserved)
 having the maximal number $n+1$ of positive solutions and which is a small perturbation of the starting system. 
\end{proof}

\begin{figure}[ht]
\begin{center}
\includegraphics[scale=0.6]{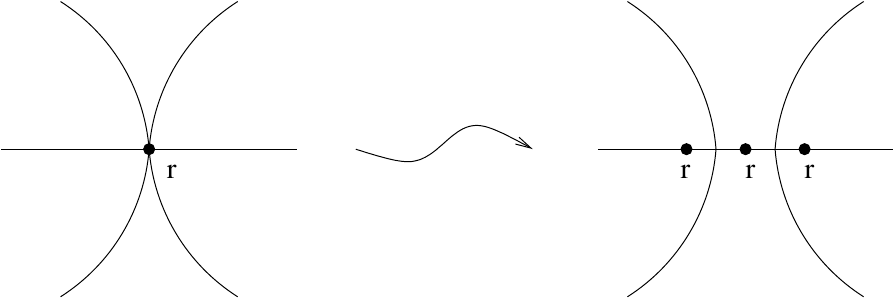}
\caption{A local deformation of a real dessin d'enfant with a triple point (valency six) marked with a letter $r$ to a real dessin d'enfant with three simple points (valency two each) marked with a letter $r$. The horizontal segment represents part ot $\R P^{1}$.}
\label{Splitting}
  \end{center}
\end{figure}

\section{Multiplicity of one point on one hypersurface} \label{sec:onef}
%

In this section we consider as before a configuration $\A  \subset \Z^n$ of affine dimension $n$ and
codimension $m$, and a single non-zero  polynomial $f $ with support in $\A$. As we remarked, the condition that $\A$ has affine dimension $n$ 
is precisely the condition that the matrix $A \in \Z^{(n+1)\times N}$ defined in~\eqref{eq:A} has maximal rank $n+1$. Moreover, all maximal minors 
of the matrix $A$ are non-zero if and only if any subset of $n+1$ elements in the configuration $\A$ is affinely independent. 
In this case,  the matrix $A$ is  said to be {\em uniform}. Note that this is a generic condition in the space of matrices.

As noted before, it is enough to consider  the case  when $\A \subset \Z^n_{\ge 0}$. Then,  $f$ lies in $\C[x_1, \dots, x_n]$
 and the multiplicity of $f$ at any point is at most its degree ${\rm deg}(f)$. 
 We give in this section two bounds on the possible multiplicity $\mu(f;\mb q)$ of $f$ at a point $\mb q \in (\C^*)^n$ only in terms of the dimension $n$ and the codimension $m$.  
 
 We start with a {\em naive}  bound $\sigma(n,m)$ in Proposition~\ref{proposition:naive} that holds whenever the points in $\A$ do not lie in a hypersurface of small degree. 
 In Theorem~\ref{thm:MultiplicityOfOnePoint} we present a new bound $b(n,m)$ for uniform configurations. 
 We show in Proposition~\ref{prop:sharp} that $b(n,m)$ is always attained for cyclic configurations. 
 We also show in Theorem~\ref{th:2} that for uniform configurations $\A$ of dimension $2$ and codimension $m$,  
 $\sigma(2,m) \le b(2,m)$ and we prove that  $b(2,m)$ is a sharp bound for {\em any} planar configuration with affine dimension $2$. We end with a  counterexample in dimension $3$.
 
 Recall that the number of lattice points in the $d$-th dilate of the unit simplex in $\R^n$, that is, the number
 of monomials in $n$ variables of degree up to $d$, equals the combinatorial number $n+d \choose n$. 
 Let  $\A = \{ a_0, \dots, a_{m+n}\} \subset \Z_{\ge 0}^n$ be a configuration of affine dimension $n$ and
codimension $m$, and consider $f = \sum_{i=0}^{n+m} c_j \, x^{a_j}$ with support in $\A$.
By definition, the multiplicity $\mu(f;\mb q)$ of $f$ at $\mb q$ is at least  $\mu$ if
 all derivatives $\partial^\alpha(f)(\mb q)=0$ for any $\alpha \in \Z_{\ge 0}^n$ with $|\alpha| \le \mu-1$ (and it equals $\mu$ 
 if moreover some partial derivative of order $\mu$ does not vanish at $\mb q$).  
 Thus, $\mu(f;\mb 1) \ge \mu$ if and only if the vector of coefficients $c=(c_0, \dots, c_{m+n})$ lies in the kernel of the matrix 
 $A^{(\mu-1)} \in \Z^{{n+\mu-1 \choose n }\times (n+m+1)}$ in Definition~\ref{def:Ak}.
 Clearly, this matrix can have full row rank and non-trivial kernel only when ${n+\mu-1 \choose n} \le (n+m)$.

\begin{lemma} \label{lem:p}
Fix $n$ and let $p$ be the univariate polynomial $p(\mu) = {n+ \mu-1 \choose n}- (n+m)$. Then $p$ has a unique positive root $\mu_0(n,m)$. 
\end{lemma}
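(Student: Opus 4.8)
The plan is to treat $p$ as a real polynomial of degree $n$ in the single variable $\mu$ and to show that, on the positive real axis, it increases strictly from a negative value to $+\infty$, so that it has exactly one zero there.

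First I would rewrite the binomial coefficient as a rising product valid for any real $\mu$:
\[
{n+\mu-1 \choose n} \;=\; \frac{1}{n!}\prod_{j=0}^{n-1}(\mu+j),
\]
so that $p(\mu) = \frac{1}{n!}\prod_{j=0}^{n-1}(\mu+j) - (n+m)$ is a genuine polynomial of degree $n$ with positive leading coefficient $1/n!$. This already shows $p$ has $n$ complex roots; the content of the lemma is that exactly one of them is a positive real number.

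Next I would restrict attention to $\mu \in [0,\infty)$. Each factor $\mu+j$ with $0 \le j \le n-1$ is nonnegative and strictly increasing on this interval; the factor for $j=0$ vanishes at $\mu = 0$ and is strictly positive for $\mu > 0$. Hence the product $\prod_{j=0}^{n-1}(\mu+j)$ is $0$ at $\mu = 0$ and, on $(0,\infty)$, is a product of strictly positive strictly increasing functions, so it is itself strictly increasing there; by continuity it is strictly increasing on all of $[0,\infty)$, with value $0$ at $\mu = 0$ and tending to $+\infty$ as $\mu \to \infty$. Therefore $p$ is continuous and strictly increasing on $[0,\infty)$ with $p(0) = -(n+m) < 0$ (recall $m \ge 1$) and $p(\mu) \to +\infty$. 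By the intermediate value theorem $p$ has a zero in $(0,\infty)$, and strict monotonicity makes it unique; this is the claimed $\mu_0(n,m)$. (One may additionally note $p(1) = 1 - (n+m) < 0$, so in fact $\mu_0(n,m) > 1$.)

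There is no genuine obstacle in this argument; the only point requiring a little care is purely expository, namely to emphasize that although $p$ has $n$ roots in $\C$ (including the real numbers $0,-1,\dots,-(n-1)$ at which the binomial coefficient — but not $p$ itself — vanishes), the monotonicity argument isolates precisely the unique root lying in $(0,\infty)$.
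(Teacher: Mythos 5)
Your proof is correct. It differs from the paper's only in the uniqueness step: the paper also starts from $p(0)=\binom{n-1}{n}-(n+m)=-(n+m)<0$ together with the positive leading coefficient to get existence, but then concludes uniqueness by Descartes' rule of signs — writing $\binom{n+\mu-1}{n}=\frac{1}{n!}\prod_{j=0}^{n-1}(\mu+j)$ shows every coefficient of $p$ is nonnegative except the constant term, so the sign variation of the coefficient sequence is exactly $1$ and there is at most one positive root. Your strict-monotonicity argument on $[0,\infty)$ reaches the same conclusion from the same factorization by elementary calculus, and is arguably more self-contained since it does not invoke Descartes; the paper's version is shorter once Descartes is taken as known. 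Both buys are minor — the two arguments are essentially interchangeable here.

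One parenthetical in your closing paragraph is misstated: the numbers $0,-1,\dots,-(n-1)$ are the roots of the polynomial $\frac{1}{n!}\prod_{j=0}^{n-1}(\mu+j)$, i.e.\ of the binomial coefficient, not of $p$; the $n$ complex roots of $p$ itself lie elsewhere. This slip is purely expository and does not affect the argument.
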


\begin{proof} As $p(0) < 0$ and its leading coefficient is positive, $p$ has at least one positive root. It has exactly one because of 
Descartes' rule of signs since the sign variation of its coefficients is equal to $1$. \end{proof}

Note that for any $k \in \N$,  $p(k) \le 0$ when $k \le \mu_0(n,m)$ and $p(k) > 0$ only when $k > \mu_0(n,m)$. 

 \begin{definition}\label{def:sigmab}
Let $n,m$ natural numbers. 
The associated naive bound $\sigma(n,m)$ is defined as the floor $\lfloor{\mu_0(n,m)}\rfloor$, that
is, as the highest (positive) integer smaller or equal than $\mu_0$.
 \end{definition}

We next show when the multiplicity $\mu$ is bounded by $\sigma(n,m)$. 

\begin{proposition}
\label{proposition:naive}
Let $\A = \{ a_0, \dots, a_{n+m}\} \subset \Z^n_{\ge 0}$ be a configuration of affine dimension $n$ and codimension $m$. 
\begin{itemize}
\item[(i)] The matrix $A^{(\mu-1)}$  in Definition~\ref{def:Ak} has full row rank if and only if there is no polynomial of degree at most $\mu-1$ 
that vanishes over $\A$.
\item[(ii)] Assume that $A^{(\mu-1)}$ has full row rank. Let $f = \sum_{i=0}^{n+m} c_j x^{a_j}$ be a non-zero
polynomial such that
 $\mb 1$ is an isolated root with multiplicity $\mu$ of $f$. Then, $\mu \le \sigma(n,m)$.
 \end{itemize}
\end{proposition}

\begin{proof}
As pointed out in the proof of Lemma~2.5 in \cite{DP}, it is easy to see that for any non-zero vector 
$\lambda \in \C^{n+ \mu -1 \choose n}$, $\lambda$ lies in the {\em left} kernel of $A^{(\mu-1)}$ if and only
if the non-zero polynomial of degree up to $\mu-1$ with vector of coefficients $\lambda$ vanishes over $A$. 
This proves item (i).

Assume now that $A^{(\mu-1)}$ has full row rank and  $\mb 1$ is an isolated root with multiplicity $\mu$ of the non-zero
polynomial $f = \sum_{i=0}^{n+m} c_j x^{a_j}$. Then, the non-zero vector $c$ of coefficients of $f$
 lies in the (right) kernel of the matrix $A^{(\mu-1)}$ but not in the kernel of the matrix $A^{(\mu)}$.
As we pointed out in the paragraph before Lemma~\ref{lem:p}, if $A^{(\mu -1)}$ has full row rank and non-trivial kernel, then the polynomial 
$p$ defined in Lemma~\ref{lem:p} verifies that $p(\mu) \le 0$ and we then deduce that
$\mu \le \sigma(n,m)$.
\end{proof}
 
We now introduce the numbers $b(n,m)$.

\begin{definition}\label{def:b}
Given natural numbers $m,n$,  we call $b(n,m)$ the number defined as $1 + \lceil{\frac m n}\rceil$, that is, as the smallest (positive) integer greater
or equal than $1 + \frac m n$.    
\end{definition}
 
\begin{theorem}
\label{thm:MultiplicityOfOnePoint}
Consider  a  uniform configuration  $\A = \{ a_0, \dots, a_{m+n}\} \subset \Z^n$  of cardinality $N$ and codimension $m = N-n-1 \ge 0$. 
Then, 
for any non-zero polynomial $f (x)= \sum_{i=0}^{m+n} c_i \, x^{a_i}$ with support $\A$ and any point $\mb q \in V(f) \subset (\CC^*)^n$,  we have that
\begin{equation}\label{eq:mu2}
\mu(f; \mb q)\,  \le \,  b(n,m) \, = \, 1+\left\lceil m/n \right\rceil \,  = \, \lceil (N-1)/n \rceil.
\end{equation}
\end{theorem}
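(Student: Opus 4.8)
\emph{Proof plan.} The plan is to reduce the statement to a linear-algebra fact about the matrices $A^{(k)}$ of Definition~\ref{def:Ak} and then invoke the classical general position principle. First I would reduce to the case $\mb q = \mb 1$: replacing $f$ by $\tilde f(x) = f(q_1x_1,\dots,q_nx_n)$ changes neither the support $A$ nor the property of being nonzero, and $\mu(f;\mb q) = \mu(\tilde f;\mb 1)$. As recalled just before Lemma~\ref{lem:p}, for a nonzero $f = \sum_{j=0}^{n+m} c_j x^{a_j}$ one has $\mu(f;\mb 1) \ge \mu$ if and only if the coefficient vector $c$ lies in the (right) kernel of $A^{(\mu-1)}$. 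Hence it suffices to prove that, when $A$ is uniform and $d = b(n,m) = 1 + \lceil m/n\rceil$, the matrix $A^{(d)}$ has trivial right kernel, i.e. its $N = n+m+1$ columns are linearly independent.

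Next I would translate ``columns of $A^{(d)}$ independent'' into the geometric statement that the $N$ points $a_0,\dots,a_{m+n}$ impose independent conditions on polynomials of degree $\le d$: the $j$-th column of $A^{(d)}$ is the evaluation vector $(a_j^\beta)_{|\beta|\le d}$, and these are linearly independent exactly when the evaluation conditions at the $a_j$ are independent on the $\binom{n+d}{n}$-dimensional space of polynomials of degree $\le d$. Homogenizing, this is the condition that the points $p_j = [1:a_j] \in \mathbb{P}^{n}$ impose independent conditions on forms of degree $d$, and $A$ uniform means precisely that the $p_j$ are in linearly general position (any $n+1$ of them span $\mathbb{P}^{n}$).

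The core step is then the classical general position lemma: $N$ points in linearly general position in $\mathbb{P}^{n}$ impose independent conditions on hypersurfaces of degree $d$ whenever $N \le nd+1$. Here $N = n+m+1 \le n(1+\lceil m/n\rceil)+1 = nd+1$ since $n\lceil m/n\rceil \ge m$, so the hypothesis holds. I would either cite this or, to keep the section self-contained, give the short direct proof: fix an index $i$; since $N-1 = n+m \le nd$, partition the remaining points $\{p_j : j \ne i\}$ into $d$ blocks of at most $n$ points each; a block of $\le n$ points spans a linear subspace of dimension $\le n-1$, and by linear general position (adjoining $p_i$ would force $\le n+1$ points into a hyperplane) it lies on a hyperplane $H_k$ not containing $p_i$ — taking $H_k$ to be that span when the block has exactly $n$ points and a generic hyperplane through the block otherwise. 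Then $H_1 \cup \dots \cup H_d$ is a degree-$d$ hypersurface passing through every $p_j$ with $j \ne i$ but not through $p_i$, so the conditions at the $p_j$ are independent. Consequently $A^{(d)}$ has full column rank $N$, hence $\ker A^{(d)} = \{0\}$, no nonzero polynomial with support in $A$ vanishes to order $\ge d+1$ at $\mb 1$, and therefore $\mu(f;\mb q) \le d = b(n,m) = \lceil (N-1)/n\rceil$.

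I expect the main subtlety to be the general position argument — specifically, verifying that the hyperplane $H_k$ through each block can always be chosen to avoid $p_i$, which is exactly the place where uniformity is used and cannot be weakened. The rest (the reduction to $\mb 1$, the dictionary with the matrices $A^{(k)}$, and the arithmetic inequality $N \le nd+1$) is routine.
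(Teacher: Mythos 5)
Your proposal is correct, but it proves the theorem by a genuinely different route than the paper. The paper argues by induction on the codimension $m$: it picks a hyperplane $H$ through exactly $n$ points of $A$, normalizes so that $H=(\alpha_n=0)$, and uses the inequality $\mu(f;\mb q)\le 1+\mu(\theta_n(f);\mb q)$ together with the observation that $\theta_n(f)$ is supported on the uniform configuration $A\setminus H$ of codimension $m-n$; each induction step costs one unit of multiplicity and removes $n$ points. You instead work entirely on the side of the matrices $A^{(k)}$ of Definition~\ref{def:Ak}: after the (correct) reduction to $\mb q=\mb 1$ and the dictionary $\mu(f;\mb 1)\ge\mu\iff c\in\ker A^{(\mu-1)}$ already recorded before Lemma~\ref{lem:p}, you show that $A^{(d)}$ with $d=b(n,m)$ has trivial right kernel by proving that the $N$ points $[1:a_j]$, being in linearly general position, impose independent conditions on degree-$d$ forms whenever $N\le nd+1$ --- the classical Castelnuovo-type lemma, for which your block-partition construction of the auxiliary union of $d$ hyperplanes is a complete and correct proof (the key point, that $p_i$ cannot lie in the span of a block of at most $n$ other points, is exactly uniformity). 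The two arguments use uniformity and the ``$n$ points per hyperplane'' count in the same way, but yours is non-inductive and yields the slightly sharper-sounding statement that $A^{(b(n,m))}$ has full column rank for every uniform $A$, which sits nicely alongside the row-rank discussion in Proposition~\ref{proposition:naive}. The paper's induction, on the other hand, is more flexible for weakening the hypotheses: the step only needs a single suitable hyperplane at each stage, which is what allows the extension to arbitrary planar configurations in Theorem~\ref{th:2}, whereas your global partition needs general position for every block simultaneously.
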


\begin{proof}
Let $\A\subset \Z^n$ be a finite uniform configuration, $f$ a non-zero polynomial with support $\A$ as in the statement and $\mb q \in (\C^*)^n$ such that $f(\mb q)=0$.
Our proof is by induction on the codimension $m$ of $\A$.

The induction basis consists of the case $m=0$
for arbitrary $n$. Indeed, if $m = 0$ then $\A$ consists of $n+1$ affinely independent points and the associated matrix 
$A\in \Z^{(n+1)\times(n+1)}$ is invertible. If $\mu(f; \mb q) >1 $ (that is, if all partial derivatives of $f$ vanish at
$\mb q$),  we get that the non-zero vector $(c_0 \mb q^{a_0}, \dots, c_{N-1} \mb q^{a_{N-1}})$ lies in the kernel of the matrix $A$, a contradiction. 
Then,  each point $\mb q\in V(f)$ is regular and $\mu(f; \mb q) \le 1 \le  1 +\left\lceil m/n \right\rceil$.

Assume now that $m\ge 1$ and 
let  $H = \{ \alpha \in \R^n \, : \, \langle v_H, \alpha \rangle = \gamma_H\}$ be a hyperplane containing $n$ points in $\A$. 
Note that $H$ cannot contain more points in $\A$ by our hypothesis that $A$ is uniform. Let $a \in \A \cap H$. Without loss of generality, we can consider the translated configuration
$\A - a$, which corresponds to dividing all monomials in a Laurent polynomial $f$ with support in $\A$ by the monomial $x^{a}$. 
Taking $v_H \in \Z^n$  primitive (i.e. the gcd of its coordinates equals $1$), we 
can also assume, up to performing an invertible monomial change of coordinates in the torus $(\C^*)^n$, that
$v_H = (0,\dots, 0,1)$ and $H = (\alpha_n=0)$.
Then any $f$ with support in $\A$ can be written as:
\[
 f  \, = f(x_1, \dots, x_n) \, = \,  f_1(x_1, \dots, x_{n-1}) \, + \, f_2(x_1, \dots, x_n),  
\]
where all monomials in $f_2$ lie in ${\A}'=\A \setminus H$.
It follows that for any $\mb q \in (\C^*)^n$, 
\begin{equation} \label{eq:efe2}
\mu(f; \mb q) \,  \le \,  1+\mu(\theta_n(f); \mb q) \, = \,
1+\mu(\theta_n (f_{2}); \mb q),
\end{equation}
where we recall that $\theta_n(f) = x_n \frac{\partial f}{\partial x_n}$.
Note that $\theta_n (f_{2})$ has also support in ${\A}'$.

In case $m < n$, the cardinality of ${\A}'$ is smaller than $n$ and it consists of affinely independent points since 
otherwise any maximal minor of the matrix $A$ containing the columns corresponding to ${\A}'$ would be zero. 
Repeating our previous argument, we see that $\mu(\theta_n(f_2); \mb q) =0$ and thus
$\mu(f; \mb q) \le 1 + 0 \le 1 +\left\lceil m/n \right\rceil =2$.

If instead $m \ge n$, ${\A}'$ is a uniform configuration of affine dimension $n$  with codimension $0 \le m-n < m$. 
It follows from ~\eqref{eq:efe2} and our inductive hypothesis that
\[\mu(f; \mb q) \, \le \, 2+ \lceil (m-n)/n\rceil \, = \, 1 +\lceil m/n\rceil,\]
as claimed.
\end{proof}

If $n = 1$, then a hyperplane in $\RR$ is a point. Hence, $\A$ fulfills the genericity assumption
of Theorem~\ref{thm:MultiplicityOfOnePoint}. In particular, we have a proof of the result
we mentioned in the Introduction: If $\A$ is a point configuration with dimension $n=1$ and codimension $m$, 
then the multiplicity of a point $\mb q \in V(f) \subset \CC^*$ is at most $1+m$.

\begin{proposition}\label{prop:sharp}
The bound from Theorem~\ref{thm:MultiplicityOfOnePoint} is sharp for  any $n,m$.
\end{proposition}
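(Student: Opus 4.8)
The plan is to show that the bound of Theorem~\ref{thm:MultiplicityOfOnePoint} is already attained by the explicit polynomial $f_{m+n,n}$ of~\eqref{eqn:poly} at the point $\mathbf 1$. Its support is the cyclic configuration $A=\{\gamma(0),\gamma(1),\dots,\gamma(n+m)\}\subset\Z_{\ge 0}^n$ with $\gamma(t)=(t,t^2,\dots,t^n)$; this has dimension $n$ and codimension $m$, and it is uniform, since any $n+1$ of the columns $(1,\gamma(j))$ form a Vandermonde matrix in the distinct nodes $0,1,\dots,n+m$ and hence have nonzero determinant. Thus it suffices to prove that $\mu(f_{m+n,n};\mathbf 1)=b(n,m)=1+\lceil m/n\rceil$, as the reverse inequality is supplied by Theorem~\ref{thm:MultiplicityOfOnePoint}.

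To compute this multiplicity I would invoke the criterion recalled just before Definition~\ref{def:Ak} (and used in Proposition~\ref{proposition:naive}): writing $f_{m+n,n}=\sum_{j=0}^{n+m}c_j\,x^{\gamma(j)}$ with $c_j=(-1)^j\binom{m+n}{j}$, the point $\mathbf 1$ is a root of multiplicity at least $r$ exactly when $c\in\ker A^{(r-1)}$, that is, when $\sum_{j=0}^{n+m}c_j\,\gamma(j)^{\alpha}=0$ for every $\alpha\in\Z_{\ge 0}^n$ with $|\alpha|\le r-1$. Since $\gamma(j)^{\alpha}=j^{\,|\alpha|'}$ with $|\alpha|'=\sum_{i=1}^{n}i\,\alpha_i$ as in~\eqref{sum}, this sum equals $\sum_{j=0}^{m+n}(-1)^j\binom{m+n}{j}\,j^{\,|\alpha|'}$, which by Lemma~\ref{L:bino} vanishes precisely when $|\alpha|'<m+n$. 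The remaining point is elementary: $\max\{\,|\alpha|':|\alpha|\le r-1\,\}=n(r-1)$, attained at $\alpha=(r-1)\e_n$. Hence $\mathbf 1$ is a root of multiplicity at least $r$ if and only if $n(r-1)<m+n$, i.e. $r\le\lceil(m+n)/n\rceil=1+\lceil m/n\rceil$; taking $r$ maximal yields $\mu(f_{m+n,n};\mathbf 1)=1+\lceil m/n\rceil=b(n,m)$. One can also read this off Proposition~\ref{main point}, which identifies $\mathcal D(F_n)$ as the lower hull of $\{\alpha:|\alpha|'\ge m+n\}$ and its intersection with the $n$-th coordinate axis as $b(n,m)\,\e_n$.

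Since $A$ is uniform of dimension $n$ and codimension $m$ and $\mathbf 1\in V(f_{m+n,n})\cap(\CC^*)^n$, combining $\mu(f_{m+n,n};\mathbf 1)=b(n,m)$ with the upper bound $\mu(f_{m+n,n};\mathbf 1)\le b(n,m)$ from Theorem~\ref{thm:MultiplicityOfOnePoint} proves that the bound is sharp for this pair $(n,m)$. I expect no genuine obstacle: the content is in extracting the exact vanishing range of the alternating binomial sums from Lemma~\ref{L:bino}, and the only care needed is to check that the cyclic configuration is uniform so that Theorem~\ref{thm:MultiplicityOfOnePoint} applies; everything else is a one-line computation with ceilings.
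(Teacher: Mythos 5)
Your proof is correct and takes essentially the same route as the paper's: both evaluate the multiplicity of $f_{n+m,n}$ at $\mathbf 1$ by reducing the derivative conditions to the alternating binomial sums of Lemma~\ref{L:bino} via $\gamma(j)^{\alpha}=j^{|\alpha|'}$, and both conclude from $\max\{|\alpha|':|\alpha|\le r-1\}=n(r-1)$ that the multiplicity is exactly $1+\lceil m/n\rceil$. Your explicit Vandermonde check that the cyclic configuration is uniform (so that the matching upper bound of Theorem~\ref{thm:MultiplicityOfOnePoint} applies) is a detail the paper leaves implicit.
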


\begin{proof}
The polynomial $f_{n+m,n}$ from~\eqref{eqn:poly} is given by
\[
f_{n+m,n}(x) = \sum_{k=0}^{n+m}(-1)^k\, {n+m \choose k} \,x_1^k \,x_2^{k^2} \cdots x_n^{k^n}.
\]
Notice that for by Lemma~\ref{L:bino}, for each $j < n+m$ it holds that
\[
\sum_{k=0}^{n+m}(-1)^k\, k^j {n+m \choose k} \,x_1^k \,x_2^{k^2} \cdots x_n^{k^n}\Bigg|_{x=(1, \dots,1)} = 0.
\]
It follows that the multiplicity of $f_{n+m,n}$ at $(1, \dots,1)$ is equal to the smallest $s$
such that $s\cdot n \geq n+m$. This finishes the proof.
\end{proof}

The assumptions in Theorem~\ref{thm:MultiplicityOfOnePoint} are not optimal. That is,
there are plenty of non-uniform point configurations $\A$ for which one can find a proof using
the two cases of the induction step. This is the case of
configurations of dimension $2$, as we show in Theorem~\ref{th:2} below. On the other hand, the 
bound $b(n,m)$ does not hold in general (see Example~\ref{ex:HighMultiplicity}).

Note that for any fixed $n$, ${n + \mu-1} \choose n$ is a polynomial in $\mu$ of degree $n$, while $b(n,m)$ is linear in $m$.
It follows that after a value of $m$, we have that the naive bound $\sigma(n,m)$ is strictly smaller than our bound $b(n,m)$. 
Moreover, we have shown in Proposition~\ref{prop:sharp}
that the bound $b(n,m)$ is attained.

\begin{theorem}\label{th:2}
If $\A$ is a point configuration with dimension $n=2$ and codimension $m$, 
then the multiplicity of a point $\mb q \in V(f) \subset \CC^*$ is at most $b(2,m)$.
Moreover we always have that $\sigma(2,m) \le b(2, m)$ and the  inequality is strict for any $m \ge 5$.
\end{theorem}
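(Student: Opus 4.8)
The plan is to prove the bound $\mu(f;\mb q)\le b(2,m)$ by induction on the codimension $m$, reusing the two-branch induction of the proof of Theorem~\ref{thm:MultiplicityOfOnePoint} but replacing ``a line through $2$ points of $A$'' by ``a line through the maximal possible number of points of $A$''. Here $f$ has support exactly $A$ (all coefficients nonzero, so that $A$ genuinely spans dimension $2$), and $\mb q\in(\C^*)^2$; as usual, monomial substitutions of $(\C^*)^2$ and multiplication by monomials will be used freely since they preserve local multiplicities.

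For $m=0$ I would argue exactly as in Theorem~\ref{thm:MultiplicityOfOnePoint}: the $3\times 3$ matrix $A$ is invertible, so $\mu(f;\mb q)\ge 2$ would put the nonzero vector $(c_j\mb q^{a_j})_j$ in $\ker A$, a contradiction, giving $\mu(f;\mb q)\le 1=b(2,0)$. For $m\ge 1$, let $\ell\ge 2$ be the largest number of collinear points of $A$ and $H$ a line attaining it; translating and applying an invertible monomial substitution I may assume $H=\{x_2=0\}$ and write $f=f_1(x_1)+f_2(x_1,x_2)$ with $\Supp(f_1)=A\cap H$ and $\Supp(f_2)=A':=A\setminus H\ne\emptyset$ (nonempty since $A$ is $2$-dimensional). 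Because $\theta_2$ annihilates $f_1$, the inequality~\eqref{eq:efe2} gives $\mu(f;\mb q)\le 1+\mu(\theta_2 f_2;\mb q)$, and $\theta_2 f_2$ has support exactly $A'$ (every $a\in A'$ has $a_2\ne 0$), with $|A'|=m+3-\ell$.

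Next I would split on the affine dimension of $A'$. If $A'$ is collinear, maximality of $\ell$ forces $|A'|\le\ell$, i.e. $\ell\ge\lceil(m+3)/2\rceil$; a monomial substitution sending that line to a coordinate direction turns $\theta_2 f_2$ into a monomial times a one-variable polynomial with $|A'|$ terms, so the classical univariate bound (\cite{H}, \cite[Lemma~3.9]{KS}) yields $\mu(\theta_2 f_2;\mb q)\le |A'|-1=m+2-\ell\le m+2-\lceil(m+3)/2\rceil=\lceil m/2\rceil$, hence $\mu(f;\mb q)\le 1+\lceil m/2\rceil=b(2,m)$. If instead $A'$ is $2$-dimensional, then $|A'|\ge 3$, so $\ell\le m$ and $A'$ is a planar configuration of codimension $m-\ell\le m-2$ on which $\theta_2 f_2$ has full support; the induction hypothesis gives $\mu(\theta_2 f_2;\mb q)\le b(2,m-\ell)=1+\lceil(m-\ell)/2\rceil\le 1+\lceil(m-2)/2\rceil=\lceil m/2\rceil$, so again $\mu(f;\mb q)\le b(2,m)$ (the case $|A'|\le 1$ being trivial). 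I expect the collinear branch to be the one real obstacle, since that is precisely where uniformity was invoked in Theorem~\ref{thm:MultiplicityOfOnePoint}; making it go through rests on choosing $H$ through a maximal collinear subset together with the identity $m+2-\lceil(m+3)/2\rceil=\lceil m/2\rceil$.

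For the second assertion I would use Lemma~\ref{lem:p}: for $n=2$, $\mu_0(2,m)$ is the positive root of $\binom{\mu+1}{2}=m+2$, i.e. $\mu_0(2,m)=(-1+\sqrt{8m+17})/2$. Since $\lfloor x\rfloor\le N$ (with $N\in\Z$) iff $x<N+1$, the bound $\sigma(2,m)\le b(2,m)$ follows from $\mu_0(2,m)<2+m/2$, equivalently $\sqrt{8m+17}<m+5$, i.e. $0<m^2+2m+8$, which always holds. For strictness, since $b(2,m)$ is an integer it suffices that $\mu_0(2,m)<b(2,m)$: for $m$ even this is $\sqrt{8m+17}<m+3$, i.e. $(m-4)(m+2)>0$ (true for $m>4$), and for $m$ odd it is $\sqrt{8m+17}<m+4$, i.e. $(m-1)(m+1)>0$ (true for $m>1$); both hold once $m\ge 5$, so $\sigma(2,m)\le b(2,m)-1<b(2,m)$.
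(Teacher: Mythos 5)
Your argument is correct, and it takes a genuinely different route from the paper's. The paper proves the bound $b(2,m)$ by selecting the cutting line $H$ among the supporting lines of \emph{edges of the Newton polytope} $\Delta(A)$, splitting on whether every edge carries at least three points of $A$, and disposing of the degenerate situation ($A\setminus H$ contained in a line $L$) by switching to $L$ and observing that $A\setminus L$ is then a set of affinely independent points, so the base-case argument applies; the numerical comparison $\sigma(2,m)\le b(2,m)$ is only illustrated by a table. You instead take $H$ through a \emph{maximal collinear subset} of size $\ell$, which makes the dichotomy self-resolving: if $A\setminus H$ is collinear, maximality forces $\ell\ge\lceil (m+3)/2\rceil$, and the univariate H\'ajos bound applied to $\theta_2 f_2$ (a monomial times a one-variable polynomial after a unimodular substitution) gives exactly $\lceil m/2\rceil$ via the identity $m+2-\lceil(m+3)/2\rceil=\lceil m/2\rceil$; if $A\setminus H$ is two-dimensional, its codimension drops by $\ell\ge 2$ and the induction closes. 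What your approach buys is a cleaner, fully quantitative treatment of the degenerate branch (no appeal to the simplex observation, which the paper leaves to the reader) and an explicit algebraic verification of both the inequality $\sigma(2,m)\le b(2,m)$ and its strictness for $m\ge 5$; what the paper's approach buys is a choice of $H$ (edges of $\Delta(A)$) that stays closer in spirit to the uniform-case proof of Theorem~\ref{thm:MultiplicityOfOnePoint}. Two small points worth recording: your argument, like the paper's, implicitly requires $f$ to have support \emph{exactly} $A$ (otherwise a polynomial supported on many collinear points of $A$ violates the bound), which you state explicitly; and your closed form $\mu_0(2,m)=(-1+\sqrt{8m+17})/2$ gives $\sigma(2,3)=2$, which slightly corrects the value $3$ printed in Table~\ref{demo-table}.
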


\begin{proof}
While a point configuration $\A$ of dimension $2$ need not fulfill the genericity assumption
of Theorem~\ref{thm:MultiplicityOfOnePoint}, we show that one can always find a hyperplane $H$
as in one of the two cases in the induction step in the proof of the theorem.

If all edges of the Newton polytope $\Delta(\A)$ contain at least three points, 
then we can choose $H$ as the affine span of one edge of $\Delta(\A)$; the complement $\A\setminus H$
is full-dimensional since $\Delta(\A)$ has at least three edges.

Assume that there is an edge of $\Delta(\A)$ which contains exactly two points.
Let $H$ be the affine span of this edge. 
There are two possibilities. Either $\A\setminus H$ is full-dimensional, or $\A \setminus H$
is contained in a line  $L$. In the latter case, $\A\setminus L$ consists
of the vertices of a simplex.
The proof of the last statement is straightforward and we only illustrate it in Table~\ref{demo-table}.
\end{proof}

\begin{table}[!h] \label{demo-table}
\begin{center}
\begin{tabular}{||c c c||}
 \hline
m & $\sigma(2,m)$ & $b(2,m)$  \\ [0.5ex] 
 \hline\hline
 1 & 2 & 2 \\ 
 \hline
 2 & 2 & 2 \\
 \hline
 3 & 3 & 3 \\
 \hline
 4 & 3 & 3 \\
 \hline
 5 & 3 & 4\\ 
 \hline
  6 & 3 & 4 \\ 
 \hline
 7 &3 & 5 \\
 \hline
 8 & 4 & 5 \\
 \hline
 9 & 4 & 6\\
 \hline
 10 & 4 & 6 \\ 
  [1ex] 
 \hline
\end{tabular}
\caption{Comparing bounds for $n=2$.}
\end{center}
\end{table} 

We end with an example showing that the  bound from Theorem~\ref{thm:MultiplicityOfOnePoint} does not necessarily hold for non-generic 
uniform configurations in dimension $n=3$.
\begin{example}
\label{ex:HighMultiplicity}
A simple example in dimension $3$ of a non-uniform configuration $\A$ for which the bound
in Theorem~\ref{thm:MultiplicityOfOnePoint} does not hold  is given by the polynomial
\[
f(x_1, x_2, x_3) = x_1 g(x_2) + g(x_3), \quad \text{where } g(z) = (1-z)^4.
\]
The support set of $f$ has codimension $m = 10-4 =6$; the multiplicity at $(1,1,1)$
is $4$, but $1 + \lceil 6/3\rceil = 3$. Note that if we consider the support $\A = \{(1,0,0), (1,1,0), (1,2,0),
(1,3,0), (1,4,0), (0,0,0), (0,0,1), (0,0,2), (0,0,3), (0,0,4)\}$ the corresponding discriminantal variety is not a hypersurface.
\end{example}


\end{document}